    \title{\textbf{Intrinsic Finite Element Error Analysis on Manifolds with Regge Metrics, with Application to Calculating Connection Forms}}
    \author{Evan S. Gawlik\footnote{Santa Clara University,
500 El Camino Real,
Santa Clara, CA 95053, egawlik@scu.edu}, Jack McKee\footnote{Corresponding author, University of Hawaii at Manoa,
2565 McCarthy Mall (Keller Hall),
Honolulu, Hawaii 96822, jmckee@math.hawaii.edu}}
    \newtheorem{theorem}{Theorem}
    \newtheorem{definition}[theorem]{Definition}
    \newtheorem{corollary}[theorem]{Corollary}
    \newtheorem{lemma}[theorem]{Lemma}
    \newtheorem{remark}[theorem]{Remark}
    \newcommand{\pdiff}[2]{\frac{\partial #1}{\partial #2}}
    \newcommand{\iprod}[1]{\langle #1 \rangle}
    \newcommand{\correction}[2]{\textcolor{red}{}\textcolor{black}{#2}}
    \DeclareMathOperator*{\esssup}{ess\,sup}
\begin{document}

\maketitle 

\begin{abstract}
    We present some aspects of the theory of finite element exterior calculus as applied to partial differential equations on manifolds, especially manifolds endowed with an approximate metric called a Regge metric. Our treatment is intrinsic, avoiding wherever possible the use of preferred coordinates or a preferred embedding into an ambient space, which presents some challenges but also conceptual and possibly computational advantages. As an application, we analyze and implement a method for computing an approximate Levi-Civita connection form for a two-dimensional Riemannian manifold with trivial homology.
\end{abstract}

\footnotetext{Keywords: Finite Elements, FEEC, Manifolds, Regge Metric, Discrete Differential Geometry}

\footnotetext{2020 Mathematics Subject Classification: Primary 65N30, 58J32
}

\section{Introduction}

The analysis of finite element methods on manifolds is a fascinating and quite difficult problem. It is key to producing efficient algorithms for solving PDEs which naturally lie on a surface or which have a natural metric associated to them, such as small vibrations of a curved surface \cite{Glinskii}, diffusion on a curved surface \cite{Faraudo}, and thin-film or shallow-water approximations of the Navier-Stokes equations on curved spaces (such as the surface of the Earth) \cite{Samavaki}.
Numerical discretizations of such PDEs that automatically respect key topological, analytic, and geometric aspects of their domain, such as those appearing in the Finite Element Exterior Calculus (FEEC) framework \cite{Arnold,Arnold-Winther}, are highly desirable due to their wide applicability. 
Although robust convergence theorems have been developed for various types of discretizations of PDEs on Riemannian manifolds and approximations of Riemannian manifolds \cite{Dziuk_Elliott_2013,Holst-Stern,demlow2009higher}, typically these theorems have been directed toward embedded hypersurfaces in $\mathbb{R}^n$, where the metric is inherited from Euclidean space. (A notable exception is~\cite{christiansen2007stability}, where compact manifolds are treated without reference to an embedding.)  However, this restricts the class of manifolds which may be treated, in particular it excludes $(n-1)$-dimensional manifolds which do not possess a natural embedding (or any embedding at all) into $\mathbb{R}^n$.

Recently, Martin Licht has described commuting cochain projections for FEEC on manifolds \cite{Licht}, building upon the foundational work of Holst and Stern \cite{Holst-Stern} and providing the first (to our knowledge) complete proof of the convergence of a finite element method for the Hodge-Laplace problem on manifolds. Our work mostly runs in parallel, with the notable exception that we take the existence of commuting cochain projections as given. However, our treatment of mesh quality measures as well as the setup of the computational problem diverge. We treat the true metric and the approximate metric as existing on the same smooth manifold $M$ and perform all the analysis intrinsically. The approximate metric is a \emph{Regge metric}, that is, a piecewise-smooth metric with some continuity properties. Much of this paper is devoted to building and using machinery for finite elements on manifolds with Regge metrics.

Our initial motivation for studying intrinsic approximations of the Hodge-Laplace problem stemmed from trying to solve another problem that, at the outset, seemed much simpler than it was. 
On a 2-dimensional Riemannian manifold $M$ which has trivial homology, there is a unique frame such that the corresponding connection form is co-exact. This means it is possible to pose the problem of finding the corresponding connection form as a Hodge-Laplace problem on $M$ with an appropriate source term and boundary conditions. Most interestingly the form, but not its corresponding frame, can be computed only knowing the Gaussian curvature of $M$ and the angle that the frame makes with the normal vector on $\partial M$.  We show how to do this in Section 4. Then we prove convergence of a corresponding numerical scheme in Section 5.

Regge metrics, especially their associated quantities such as curvature and connections, have been studied extensively~\cite{cheeger1984curvature,christiansen2024definition,gawlik2020high,BKGa22,gopalakrishnan2022analysis,GaNe22,gawlik2023finite,gopalakrishnan2023analysis,gopalakrishnan2024improved}, often in the context of numerical general relativity and nonlinear elasticity \cite{Regge,christiansen2011linearization,Li,neunteufel2021avoiding}. We are unaware of any existing work using a Regge metric to approximate the Hodge-Laplace equation with a metric-dependent source term as we have done\correction{ (although Regge calculus has been used for discrete approximations of the Laplacian in the physics literature \cite{Calcagni_2013})}{}, however it is a relatively simple matter to specialize the abstract theory of geometric variational crimes once the necessary background is established. The geometric error incurred depends only on quantities intrinsic to $M$, and local coordinates may be used to take advantage of existing finite element spaces and their approximation properties. We take this up in Section 3.

One key thing that distinguishes our analysis from prior work is our adherence to intrinsic calculations. We take the point of view that all convergence results for a system posed in a manifold should be stated in terms of quantities intrinsic to the manifold, wherever possible avoiding the use of coordinate-dependent expressions. This means that much of the basic theory, as well as scaling arguments, is much more involved than it otherwise would be. It is our belief, however, that these are necessary steps to decouple as much as possible the description of the computational mesh from the geometry of the problem. In Section 2 we state and/or prove some elementary properties of the $L^2$, $H(d)$ (meaning those $L^2$ differential forms which have an $L^2$ exterior derivative), and $H^1$ spaces of differential forms on manifolds with Regge metric and the trace and inverse inequalities for curved elements, as well as fairly ad-hoc, though in our context quite natural, definitions of shape regularity and quasi-uniformity of simplicial meshes. Our bounds do not depend on any embedding of the manifold into an ambient space or any coordinate-dependent expressions, although we do also provide weaker bounds in terms of coordinate-dependent expressions.

\paragraph{Computing connection forms.}

As we alluded to above, finding/justifying the appropriate discretization of the connection form problem was a deceptively involved process. The difficulty stems from the source term and boundary conditions, which must be treated with great care to produce a well-posed and convergent numerical discretization. Our efforts spurred us to write down a generalized Gauss-Bonnet Theorem for manifolds with Regge metrics in a recent preprint~\cite{gawlik2025curvaturereggemetrics}.  With the appropriate source term and boundary conditions, the convergence analysis of our numerical scheme can be carried out with the help of existing work on the distributional scalar curvature of manifolds with Regge metrics. In Section 5 we rigorously prove the convergence of the computed connection form in our intrinsic setting. We briefly demonstrate numerical convergence for a simple model problem in Section 6.

It is natural to wonder whether the computed connection form has any practical uses.  In fact it does; our preliminary work indicates that one can leverage it to compute the spectrum of the Bochner Laplacian.  (Recall that the Bochner Laplacian, which differs from the Hodge Laplacian, is constructed by composing the covariant derivative with its $L^2$-adjoint.)  We plan to pursue this in future work.

\paragraph{Contributions.}
In Section 2 we prove (to our knowledge) the first scaled trace and inverse inequalities applied to arbitrary curved finite elements which make no reference to an ambient space or special coordinates. We also arrive at new definitions of shape regularity and quasi-uniformity for manifolds with Regge metrics, and we prove that Euclidean shape regularity and quasi-uniformity in coordinates implies Riemannian shape-regularity and quasi-uniformity\correction{}{, thereby showing that the new definitions are generalizations of the standard definitions}. In Section 3, \correction{we specialize FEEC methods to the situation where a Regge metric is used to approximate a smooth Riemannian metric}{we apply existing results from the theory of geometric variational crimes~\cite{Holst-Stern} to a setting where a Regge metric approximates a smooth Riemannian metric}, and we provide \correction{}{new} intrinsic bounds on the geometric error. In Sections 4, 5, and 6 we apply these methods to derive and prove convergence of an approximate connection form for compact 2-dimensional Riemannian manifolds with trivial homology. We have attempted to point out where our results \correction{}{in these latter sections} are based on prior work, especially Theorem \ref{diffFh} is an extension of an existing proof by the first author and collaborators, and Theorem \ref{Fhbounds} is inspired by that proof as well. 

\paragraph{Notation.}
This section will serve as a guide for the notation used in this paper.

The language of differential forms and exterior calculus is used heavily, as is the Einstein summation convention. The reader is referred to \cite{Darling}, \cite{Flanders}, \cite{Marsden}, and \cite{deRham} for background. All manifolds under consideration are allowed to be manifolds with corners (per definitions used in \cite{Knapp})\correction{}{, a class which includes simple polytopes in $\mathbb{R}^n$}. We use $T_x(M)$ or $T_xM$ to represent the tangent space of a manifold $M$ at the point $x \in M$, $T^*_x(M)$ or $T^*_xM$ to represent the cotangent space at $x \in M$, and $\Lambda^k_x(M) := \wedge_{i = 1}^k T^*_xM$ is the space of alternating $k$-covectors at $x \in M$. Without the subscript $x$ these refer to the tangent bundle, cotangent bundle, and bundle of alternating covariant $k$-covectors respectively. Often the manifold $M$ in consideration is called $T$, leading to expressions like $T(T)$. $C^\infty\Omega^k(M)$ refers to the space of smooth sections $M \to \Lambda^k(M)$, i.e.,~the space of differential forms. For a smooth diffeomorphism $f: M \to N$, we make frequent use of the notation $f^{-*} := (f^{-1})^*$, where $f^*: C^\infty\Omega^k(N) \to C^\infty\Omega^k(M)$ is the pullback map.  If $M$ is a submanifold of $N$, then the inclusion map $M \hookrightarrow N$ is denoted $i_M$.

Multi-index notation is also used heavily. A multi-index is a tuple $I = (i_1,i_2,\dots,i_k)$, where $1 \le i_j < i_{j+1}$ for each $j$ and $i_k \le n$, where $n$ is (usually) known from context as the dimension. The length of a multi-index is denoted $|I|$ and is equal to $k$. The special multi-index $[k]$ is simply $(1,2,\dots,k)$.  The symmetric group on $\{1,2,\dots,k\}$ is denoted $S_k$.

Submanifolds $T$, $e$, and $p$ will be implicitly assumed to refer to $n$-dimensional simplices, $(n-1)$-dimensional simplices, and points (respectively) in a smooth triangulation of an $n$-dimensional manifold. All such objects are considered to be closed unless stated otherwise. In sections 4, 5, and 6, because $n=2$, these will refer to triangles, edges, and points respectively.

An integral over the boundary of a manifold with corners is always implicitly understood to mean the sum of integrals over each codimension-1 smooth segment of the boundary (in other words, the stratum of index 1 \cite{Knapp}).

A subscript $t$ will mean that a quantity is dependent on a metric $g(t)$. A subscript $h$ will mean that a quantity is dependent on a metric $g_h$. Occasionally we use a subscript $g$ to emphasize that a quantity is dependent on the metric $g$. \correction{}{In some cases, when the implied metric is clear from the context, we suppress subscripts altogether.} All metrics that we consider are Regge metrics, which in our case means they are smooth Riemannian metrics on each top-dimensional simplex, and the tangential-tangential components of the metric tensor are single-valued on any shared face of two simplices of any dimension. In particular, every smooth metric is a Regge metric.

Frequently we will take norms of matrices of 1-forms/matrix-valued 1-forms. The notation $\|\cdot\|_{\_,g}$ will mean the norm of type $\_$ (e.g., the matrix $2$-norm $\|\cdot\|_2$, Frobenius norm $\| \cdot\|_F$, $\infty$-norm $\|\cdot\|_\infty$, or maximum-entry norm $\|\cdot\|_m$) maximized over the set of all norm-$1$ input vectors $V$, as measured by $g$. Specifically, $\|A|_x\|_{\_,g} = \sup_{V \in T_x(M), V \ne 0}\frac{\|A(V)\|_{\_}}{\|V\|_g}$.

A quantity in double brackets $[\![X]\!]$ has double meaning: when the quantity is evaluated along a facet shared by two simplices, $[\![X]\!] := X^+ - X^-$, where $X^+$ is evaluated using the metric on one side and $X^-$ is evaluated using the metric on the other side, both with the corresponding induced orientations on the boundaries of each triangle. In the case that the quantity is evaluated along a positively oriented edge that lies in the boundary of only one triangle, $[\![X]\!] := X^+$. In the case that the quantity is evaluated at a corner $p$ of a triangle $T$, $[\![X|_T]\!] := X^+ - X^-$ where $X^+$ is the quantity evaluated on the ``counterclockwise" side and $X^-$ is the quantity evaluated on the ``clockwise" side intersecting at $p$, with the order given by the  orientation on $T$. In one instance, the jump notation is used to refer to the jump of a scalar quantity across a corner on the boundary in the same way.

For two normed spaces $X$ and $Y$, $\|\cdot\|_{\mathcal{L}(X,Y)}$ is the operator norm on linear maps $X \to Y$.

\section{Sobolev Spaces on Manifolds with Regge Metrics}
\subsection{Regge Metrics and Their $L^2$, $H(d)$, and $H^1$ Spaces}
A \emph{manifold with Regge metric} $(M,\mathcal{T},g)$ is a smooth oriented $n$-dimensional manifold $M$ (possibly noncompact and possibly with boundary or corners), with a smooth countable triangulation $\mathcal{T}$, and a Regge metric $g$ for this triangulation. 

By a smooth countable triangulation of $M$ we mean that $\mathcal{T}$ is a countable cover of $M$ by closed sets $\Delta$, each of which is the image of a smooth embedding from the standard $k$-simplex $\hat\Delta^k = \{x \in \mathbb{R}^k: \sum_{i = 1}^k x^i \le 1, 0 \le x^i \le 1 \forall i\}$ for some $0 \le k \le n$, each face of each simplex is also in $\mathcal{T}$, and the intersection of any two simplices in $\mathcal{T}$ is either empty or a face of both simplices. If the relative interior of a simplex intersects a stratum of $\partial M$, meaning it contains a corner point of any degree, it must lie fully within that stratum. Note that we are abusing terminology slightly by referring to the smooth image of a simplex, which may be curved, as a simplex. 

By a Regge metric $g$ for $\mathcal{T}$, we mean the data of a smooth Riemannian metric $g_{\Delta}$ for each $\Delta \in \mathcal{T}$, with the compatibility condition $i_f^* g_{\Delta} = g_{f}$ for each face $f \subset \Delta$. In other words, the tangential-tangential components of $g$ are single-valued on any shared face of two simplices of any dimension. In particular, the tangential-tangential components of $g$ must be continuous across an interface $e = T \cap T'$, when $T$ and $T'$ are $n$-dimensional simplices, while the normal-tangential and \correction{tangential-tangential}{normal-normal} components may be discontinuous. Frequently, the triangulation will be implicitly included with the data of the Regge metric $g$, and we just write $(M,g)$.

For now, no further assumptions will be put on $(M,\mathcal{T},g)$. In this section we collect some properties of spaces of differential forms on manifolds with Regge metrics. The development is quite similar to those in \cite{Licht,Schwarz}, but adapted for Regge metrics. We will therefore omit some details for brevity.

We begin by describing the inner product induced by the metric $g_T$ on $\Lambda^k_x(T)$ for every $T \subseteq M, x \in T, k \in \mathbb{N}$. This inner product in turn is induced from the inner product on $T^*_x(T)$:
\[
\iprod{\alpha,\beta} := \alpha(\beta^\sharp),
\]
where $\beta^\sharp$ is the unique element of $T_x(T)$ such that, for all $V \in T_x(T)$, $g_T(\beta^\sharp,V) = \beta(V)$. The inner product on $\Lambda^k_x(T)$ is then defined by
\[
\iprod{\alpha^1\wedge\dots\wedge\alpha^k,\beta^1\wedge\dots\wedge\beta^k} := \sum_{\sigma \in S_k} \mathrm{sign}(\sigma)\prod_{i = 1}^k \iprod{\alpha^i,\beta^{\sigma(i)}}
\]
and extending multilinearly. This inner product induces the usual corresponding norm $\|\alpha\| := \sqrt{\iprod{\alpha,\alpha}}$.
 Notably, the inner product of covectors is dual to the contraction of vector fields: $\alpha(X_1,\dots,X_k) = \iprod{\alpha,X_1^\flat\wedge \dots \wedge X_k^\flat}$, where $\flat$ denotes the inverse of $\sharp$, so the induced norm is the same as the operator norm on $(\displaystyle\bigwedge^kT_x(T))^* \cong \Lambda^k_x(T)$. 
 
Another, often more convenient, way to write the inner product of two alternating $k$-covectors is to use the Hodge star, which is defined in terms of the aforementioned inner product. We set $\star \alpha$ to be the $(n-k)$-covector such that $\iprod{\beta,\alpha}dV = \beta\wedge\star\alpha$ for all $\beta \in \Lambda^k_x(T)$, where $dV$ is the volume form induced by $g$. This map is clearly an invertible isometry, and its inverse is in fact $(-1)^{k(n-k)}\star$.

Lastly we note that the natural tensor product metric $\iprod{\alpha \otimes \beta,\gamma \otimes \delta} := \iprod{\alpha,\gamma}\iprod{\beta,\delta}$ can be used to extend this inner product to $T_x^*(T)^{\otimes p} \otimes \Lambda^k_x(T)$. We will use the shorthand $\Lambda^{p,k}_x(T) := T_x^*(T)^{\otimes p}\otimes \Lambda^k_x(T)$.  Similarly, $\Lambda^{p,k}(T) := T^*(T)^{\otimes p}\otimes \Lambda^k(T)$.  The space of smooth sections $T \to \Lambda^{p,k}(T)$ will be denoted $C^\infty\Omega^{p,k}(T)$, and the subspace of compactly supported smooth sections $T \to \Lambda^{p,k}(T)$ will be denoted $C_c^\infty\Omega^{p,k}(T)$.

A set $X \subseteq M$ is \emph{$\Sigma$-measurable} if $\phi(X \cap U)$ is Lebesgue measurable for each coordinate chart $(U,\phi)$ of $M$. The condition of being measurable is preserved by smooth coordinate changes, so this is a well-defined $\sigma$-algebra $\Sigma$ on $M$, and it is additionally a Borel $\sigma$-algebra. A \emph{$\Sigma$-measurable $k$-form} is a section $M \to \Lambda^k(M)$ whose coefficients in each coordinate chart are measurable functions $\mathbb{R}^n \to \mathbb{R} \cup \{\pm \infty\}$. Again this condition is invariant under smooth coordinate changes, so it makes sense. 

A $\Sigma$-measurable $k$-form $\alpha$ is \emph{locally integrable} if $\sum_{T \subseteq M} \int_T\|\alpha \wedge \phi\|dV < \infty$ for all smooth compactly supported test forms $\phi \in C^\infty_c\Omega^{n-k}(M)$, or equivalently, if each coefficient of $\alpha|_T$ is locally integrable for any simplex $T \subset M$, and for any coordinates on $T$. The set of locally integrable $k$-forms will be called $L^1_{\mathrm{loc}}\Omega^k(M)$, and they enjoy the property that $\int_M \alpha \wedge \phi$ is well-defined for any of the test forms. Note that which metric is used to test local integrability does not matter, as $\|\alpha \wedge \phi\|_{g_1}dV_{g_1} = \|\alpha \wedge \phi\|_{g_2}dV_{g_2}$ a.e. for any $g_1, g_2$. Likewise the integral $\int_T \alpha \wedge \phi$ on each $n$-simplex $T \subseteq M$ is defined intrinsically and is independent of any particular metric, because $\alpha \wedge \phi$ is a multiple of the smooth volume form by a Lebesgue integrable function when pulled back along the embedding map $\hat\Delta^n \tilde\rightarrow T \subseteq M$.

\begin{remark}
    When one considers open, bounded domains in $\mathbb{R}^n$, compact support implies vanishing on the boundary. However in our case, $M$ can be compact, or can be noncompact but include its boundary. Compact support of a smooth form in this case is used to guarantee a well-defined, finite integral but does not imply anything about boundary conditions.  We will focus on compact manifolds with corners rather than open domains in $\mathbb{R}^n$, because open domains cannot support a finite triangulation, and the Hodge theory for noncompact, incomplete manifolds is substantially more complicated \cite{Junya}. Focusing in this way means that some definitions below appear slightly nonstandard; for instance, the set of compactly supported smooth functions is dense in the space $H(d)\Omega^0(M,g)$ defined below.
\end{remark}

If $\alpha,\beta$ are locally integrable $k$-forms, then in particular they are locally integrable when restricted to each simplex $T \subseteq M$. Because the Regge metric $g$ restricts to the smooth metric $g_T$ on $T$, the usual theory of Sobolev spaces of functions carries over almost unchanged to the space of tensor-valued differential forms on $T$ \cite[p. 30]{Schwarz}. The space $L^2\Omega^{p,k}(T,g)$ is the completion of $C^\infty_c \Omega^{p,k}(T)$ under the norm induced by the inner product
\[\iprod{\alpha,\beta}_{L^2(T,g)} := \int_{\mathring{T}} \iprod{\alpha,\beta}dV\correction{.}{,}\]
\correction{}{where the integrand on the right is implicitly metric-dependent.}
Of particular interest at this level is the operator $\nabla: C^\infty\Omega^{p,k}(T) \to C^\infty\Omega^{p+1,k}(T)$. For a form $\alpha \in C^\infty\Omega^{p,k}(T)$ and a vector field $Y$ on $T$, the covariant derivative $\nabla_Y \alpha$ is defined by
\[
(\nabla_Y \alpha)(X_1,\dots,X_{p+k}) = Y(\alpha(X_1,\dots,X_{p+k})) - \sum_j \alpha(X_1,\dots,\nabla_YX_j,\dots,X_{p+k}).
\]
It is easily verified that $\nabla_Y \alpha$ is a $T^*(T)^{\otimes p}$-valued differential $k$-form and $\nabla_{fY}\alpha = f\nabla_Y \alpha$. Therefore $\nabla \alpha$ is a smooth section of $T^*(T)^{\otimes p+1} \otimes \Lambda^k(T)$. 

This operator is by definition bounded on the space $H^1\Omega^{p,k}(T,g)$, which is the completion of $C^\infty\Omega^{p,k}(T)$ with the norm 
\[\|\alpha\|_{H^1(T,g)} := \sqrt{\|\alpha\|_{L^2(T,g)}^2 + \|\nabla \alpha\|_{L^2(T,g)}}.\]

With the $L^2(T,g)$ spaces and inner products defined for individual top-dimensional simplices $T$, it is possible to define the $L^2(M,g)$ inner product as simply the sum over all top-dimensional simplices in the triangulation, i.e., $\iprod{\alpha,\beta}_{L^2(M,g)} := \sum_{T \subseteq M} \iprod{\alpha,\beta}_{L^2(T,g)}$. The space $L^2\Omega^{p,k}(M,g)$ is defined as the completion of $C^\infty_c\Omega^{p,k}(M)$ under the induced norm.

It is tempting to define the $H^1$ inner product on $M$ in a similar way, and such a product is useful, but we avoid calling it ``the $H^1$ inner product'' on $M$, because due to discontinuities of the metric $g$, a form $\alpha$ such that $\nabla\alpha|_T \in L^2\Omega^{1,k}(T,g)$ for each $T \subseteq M$ does not necessarily have a weak covariant derivative that is in $L^2\Omega^{1,k}(M,g)$. Properly defining weak covariant derivatives is outside of the scope of this article.

The exterior differential operator $d$ and its formal dual $\delta = (-1)^{n(k-1) - 1}\star d\star$ can be defined weakly for locally integrable forms, but only on certain test forms, namely those who have no tangential (respectively, normal) component on the boundary. Formally, we define
\[
C^\infty_c\Omega^k_\mathbf{t}(M) := \{\phi \in C^\infty_c\Omega^k(M) : i_{\partial M}^*\phi = 0\}.
\]

Smooth tangential-free forms are dense in $L^2\Omega^k(M,g)$. To show this, we just need to show that any form $\alpha \in C^\infty_c\Omega^k(M)$ can be approximated by forms in $C^\infty_c\Omega^k_{\mathbf{t}}(M)$. If $\rho_\epsilon$ is a smooth compactly supported bump function which is equal to $1$ on $\partial M \cap \mathrm{supp}(\alpha)$, is less than or equal to $1$ on $\mathring{M}$, and such that the support of $\rho_\epsilon$ has volume $\epsilon$, then $(1 - \rho_\epsilon)\alpha \in C^\infty_c\Omega^{p,k}_{\mathbf{t}}(M)$ and $\|(1 - \rho_\epsilon)\alpha - \alpha\|_{L^2(M,g)} = \|\rho_\epsilon\alpha\|_{L^2(M,g)} \le \epsilon^{1/2} \sup_{x \in M} \|\alpha|_x\|_g$, which converges to zero as $\epsilon \to 0$. Such bump functions can be constructed because $\partial M \cap\mathrm{supp}(\alpha)$ is compact.

If $\alpha \in L^1_\mathrm{loc}\Omega^k(M)$, then the distributional exterior derivative of $\alpha$ will be defined by
\[
\forall \phi \in C^\infty_c\Omega^{n-k-1}_\mathbf{t}(M), \;\;\iprod{\iprod{d\alpha,\phi}} := (-1)^{k+1}\int_M \alpha \wedge d\phi.
\]
When $\alpha$ is a smooth form, $\iprod{\iprod{d\alpha,\phi}} = \int_M d\alpha \wedge \phi$.
We define the $H(d)$ norm for forms $\alpha \in L^2\Omega^k(M,g)$ by
\[
\|\alpha\|_{H(d)(M,g)} := \|\alpha\|_{L^2(M,g)} + \sup_{\phi \in C^\infty_c\Omega^{n-k-1}_\mathbf{t}(M) \backslash \{0\}} \frac{\iprod{\iprod{d\alpha,\phi}}}{\|\phi\|_{L^2(M,g)}}.
\]

The space $H(d)\Omega^k(M,g) \subset L^2\Omega^k(M,g)$ is defined as the closure of $C^\infty_c\Omega^k(M)$ with the topology induced by the $H(d)$ norm. Note that if $\alpha \in H(d)\Omega^k(M,g)$, then $d\alpha \in L^2\Omega^{k+1}(M,g)$, because $d\alpha$ as a functional is bounded in the $L^2$ norm on the dense subset $C^\infty_c\Omega^{n-k-1}_{\mathbf{t}}(M)$.

The map $d: H(d)\Omega^k(M,g) \to L^2\Omega^{k+1}(M,g)$ is continuous by definition and agrees with its smooth counterpart when evaluated on smooth forms. $H(d)\Omega^k(M,g)$ therefore has a natural Hilbert space structure $\iprod{\alpha,\beta}_{H(d)(M,g)} := \iprod{\alpha,\beta}_{L^2(M,g)} + \iprod{d\alpha,d\beta}_{L^2(M,g)}$. We will see later that $H(d)\Omega^k(M,g)$, as a topological space, depends on the metric only up to quasi-isometry, meaning the ``small-scale'' differences in geometry are inconsequential.  (See Section~\ref{sec:bounds} for the definition of quasi-isometry.)

With the distributional exterior derivative $d$ and the $H(d)$ space defined, the distributional codifferential $\delta$ and $H(\delta)$ can be defined fairly easily, with $\delta := (-1)^{n(k-1) - 1} \star d \star$ and $H(\delta)\Omega^k(M,g) := \star^{-1}(H(d)\Omega^{n-k}(M,g))$. The following formula is just integration by parts for smooth forms, and the left-hand side remains well-defined if $\alpha \in  H(d)\Omega^k(M,g)$ and $\beta \in H(d)\Omega^{n-k-1}(M,g)$:
\begin{equation}\label{dist_exterior_deriv}
\int_M d\alpha \wedge \beta + (-1)^k \int_M \alpha \wedge d\beta = \int_{\partial M} i_{\partial M}^*(\alpha \wedge \beta).
\end{equation}

So, while $i_{\partial M}^* \alpha$ is not necessarily well-defined, there can still be a notion of ``trace zero" if $k < n$. When (\ref{dist_exterior_deriv}) is zero for all $\beta$ we will say that $\alpha \in H(d)\Omega_\mathbf{t}^k(M,g)$. We will also set $H(\delta)\Omega^k_\mathbf{n}(M,g) := \star^{-1}(H(d)\Omega_\mathbf{t}^{n-k}(M,g))$. As one would expect, as long as $\alpha \in H(d)\Omega^k(M,g)$ and $\beta \in H(\delta)\Omega^{k+1}(M,g)$, we have
\[
\iprod{d\alpha,\beta}_{L^2(M,g)} = \iprod{\alpha,\delta\beta}_{L^2(M,g)}
\]
if $\alpha \in H(d)\Omega^k_\mathbf{t}(M,g)$ or $\beta \in H(\delta)\Omega^{k+1}_\mathbf{n}(M,g)$. In other words, $H(\delta)\Omega^k_{\mathbf{n}}(M,g)$ is the $L^2$ dual of $H(d)\Omega^k(M,g)$ and $H(d)\Omega^k_{\mathbf{t}}(M,g)$ is the dual of $H(\delta)\Omega^k(M,g)$.

The spaces $H(d)\Omega^k(M,g)$ and $H(\delta)\Omega^k(M,g)$ contain subspaces consisting of compactly supported piecewise-smooth forms. It is useful to understand what these subspaces look like.

If $i_e^*[\![\alpha]\!] \ne 0$ for some $(n-1)$-simplex $e$ that is not contained in $\partial M$, that is if the tangential components of $\alpha$ are not continuous, then test forms $\phi$ can be constructed so that $\|\phi\|_{L^2(M,g)}$ is arbitrarily small but $\iprod{\iprod{d\alpha,\phi}}$ is arbitrarily large, which would mean that $\|\iprod{\iprod{d\alpha,\cdot}}\|_{L^2\Omega^{n-k-1}(M,g)^*} = \infty$. Therefore the compactly supported piecewise-smooth elements of $H(d)\Omega^k(M,g)$ are those which are continuous in their tangential components across $n$-simplex boundaries. This gives some intuition as to why the space $H(d)\Omega^k(M,g)$, and the map $d$, are independent of the small-scale geometry of $g$, since this is a purely topological condition.

Similarly, the compactly supported piecewise-smooth elements of $H(\delta)\Omega^k(M,g)$ are those which are continuous in their normal components, as defined by $g$. These will generally be discontinuous and will depend on the small-scale geometry of $g$, meaning they are significantly less convenient to work with when designing a finite element method.

\subsection{Bounds on Sobolev Norms from Differing Metrics} \label{sec:bounds}

A measurement of the ``difference'' between two Regge metrics is an important ingredient for analyzing approximation properties of metrics and PDE problems depending on them. The notion of \emph{quasi-isometry}, fundamental in geometric group theory \cite{Gromov} and long studied in relation to approximations (both analytical and metrical) of Riemannian manifolds \cite{Kanai}, turns out to provide a natural and powerful tool to this end. 

Given two Regge metrics $(\mathcal{T}_1,g_1)$ and $(\mathcal{T}_2,g_2)$ on the same manifold $M$, and a smooth submanifold $\Omega \subseteq M$, we define
\begin{equation}
C_{g_1,g_2}(\Omega) := \esssup_{x \in \Omega, \; v \in T_x\Omega \backslash \{0\}} \frac{\|v\|_{g_1}}{\|v\|_{g_2}}
\end{equation}
and
\begin{equation}
D_{g_1,g_2}(\Omega) := \esssup_{x \in \Omega} \|dV_{g_1}|_x\|_{g_2},
\end{equation}
where $dV_{g_1}$ is understood to mean the $g_1$-volume form of $\Omega$, and a set $X$ has measure 0  if $\mu_\mathrm{Lebesgue}(\phi(X \cap U)) = 0$ for any coordinate chart $(U,\phi)$ of $\Omega$. We define $C_{g_1,g_2}(\emptyset)$ and $D_{g_1,g_2}(\emptyset)$ to be zero for convenience, as these quantities are always nonnegative.

When $\Omega = \bigcup_i \Omega_i$ is a finite union of smooth submanifolds (such as the boundary of a simplex), $C_{g_1,g_2}(\Omega) := \sup_i C_{g_1,g_2}(\Omega_i)$ and likewise for $D_{g_1,g_2}(\Omega)$.

If $0 < C_{g_1,g_2}(M) < \infty$ and $0 < C_{g_2,g_1}(M) < \infty$, we will say that $(M,\mathcal{T}_1,g_1)$ and $(M,\mathcal{T}_2,g_2)$ are \emph{quasi-isometric}. This induces an equivalence relation on the set of smooth manifolds with Regge metrics. Quasi-isometry can be interpreted to mean that two manifolds are ``asymptotically the same''. It is very straightforward to show that if $\mathcal{T}_1$ and $\mathcal{T}_2$ are both finite triangulations, then $g_1$ and $g_2$ are quasi-isometric, and hence a compact manifold $M$ supports only one quasi-isometry class.

As one might expect, $L^2$ spaces of forms depend only on the quasi-isometry class of the metric.

\begin{theorem}\label{L2_equiv}
If $(\mathcal{T}_1,g_1)$ and $(\mathcal{T}_2,g_2)$ are two Regge metrics on the same manifold $M$, then
\begin{enumerate}
\item{$ \frac{1}{C_{g_1,g_2}(M)^n} \le D_{g_2,g_1}(M) \le C_{g_2,g_1}(M)^n$},
\item{$\displaystyle{\forall \alpha \in L^2\Omega^{p,k}(M,g_2), \;\|\alpha\|_{L^2(M,g_1)}^2 \le{n \choose k}n^pC_{g_2,g_1}(M)^{2p + 2k}D_{g_1,g_2}(M)\|\alpha\|_{L^2(M,g_2)}^2}$.}
\end{enumerate}
\end{theorem}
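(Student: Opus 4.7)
The plan is to establish both inequalities pointwise in the interior of each top-dimensional simplex (where both metrics are smooth) and then integrate and sum. For Part 1, fix $x$ in the interior of an $n$-simplex and pick a positively oriented $g_2$-orthonormal basis $\{f_i\}$ of $T_xM$ with dual $\{f^i\}$. Then $dV_{g_2}|_x=f^1\wedge\cdots\wedge f^n$ and $dV_{g_1}|_x=\sqrt{\det G}\,f^1\wedge\cdots\wedge f^n$, where $G_{ij}:=g_1(f_i,f_j)$, so $\|dV_{g_1}|_x\|_{g_2}=\sqrt{\det G}$. The (real, positive) eigenvalues of $G$ are Rayleigh quotients $g_1(v,v)/g_2(v,v)$ along a basis of eigenvectors, and by the definitions of the $C$-constants each such ratio lies in $[C_{g_2,g_1}(M)^{-2},C_{g_1,g_2}(M)^2]$ almost everywhere. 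Taking the product of $n$ eigenvalues gives $C_{g_2,g_1}(M)^{-n}\le\sqrt{\det G}\le C_{g_1,g_2}(M)^n$ a.e., which passes to the essential supremum to yield Part 1.

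For Part 2, I would split the integrand pointwise as a product of a tensor-inner-product ratio and a volume ratio. The volume ratio is at most $D_{g_1,g_2}(M)$ by definition, so the crux is the pointwise estimate $\langle\alpha,\alpha\rangle_{g_1}\le\frac{n!\,n^p}{(n-k)!}C_{g_2,g_1}(M)^{2(p+k)}\langle\alpha,\alpha\rangle_{g_2}$. Working in the same $g_2$-orthonormal basis, expand $\alpha|_x=\sum_{\mathbf j,I}\alpha_{\mathbf j;I}\,f^{j_1}\otimes\cdots\otimes f^{j_p}\otimes f^I$, where $\mathbf j\in\{1,\dots,n\}^p$ and $I=(i_1<\cdots<i_k)$, with $f^I:=f^{i_1}\wedge\cdots\wedge f^{i_k}$. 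Since these basis elements are orthonormal in $g_2$, $\|\alpha|_x\|_{g_2}^2=\sum_{\mathbf j,I}|\alpha_{\mathbf j;I}|^2$.

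Evaluating $\|\alpha|_x\|_{g_1}^2$ in the same basis produces a quadratic form in the coefficients whose entries are products $\prod_l\langle f^{j_l},f^{j'_l}\rangle_{g_1}\cdot\langle f^I,f^{I'}\rangle_{g_1}$. By Lemma \ref{norm_comparison_equiv} applied pointwise to $T_x^*M$, one has $\|f^i\|_{g_1}\le C_{g_2,g_1}(M)$ a.e. Expanding the wedge inner product as a signed sum over $S_k$ and applying Cauchy--Schwarz term by term gives $|\langle f^I,f^{I'}\rangle_{g_1}|\le k!\,C_{g_2,g_1}(M)^{2k}$ and $|\prod_l\langle f^{j_l},f^{j'_l}\rangle_{g_1}|\le C_{g_2,g_1}(M)^{2p}$. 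Combining with the elementary bound $(\sum|a_i|)^2\le N\sum|a_i|^2$ applied to $N=n^p\binom{n}{k}$ multi-indices yields $\|\alpha|_x\|_{g_1}^2\le k!\binom{n}{k}n^p\,C_{g_2,g_1}(M)^{2(p+k)}\|\alpha|_x\|_{g_2}^2$, which is exactly the pointwise bound needed since $k!\binom{n}{k}=n!/(n-k)!$. Integrating over each top simplex, applying Part 1 to handle $dV_{g_1}\le D_{g_1,g_2}(M)\,dV_{g_2}$, and summing over the triangulation finishes Part 2.

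The main obstacle I anticipate is not conceptual but bookkeeping: tracking the combinatorial factor $n!/(n-k)!=k!\binom{n}{k}$ honestly through the expansion of the $\Lambda^k$ inner product. A sharper simultaneous-diagonalization argument for $g_1$ and $g_2$ on $T_xM$ would give the cleaner pointwise factor $C_{g_2,g_1}(M)^{2(p+k)}$ with no combinatorial prefactor, but the triangle-inequality route above is the most direct derivation of the looser bound as stated, and the extra dimensional constant is harmless for the scaling applications that follow.
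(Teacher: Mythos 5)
Your proposal is correct and follows essentially the same route as the paper: a pointwise determinant/eigenvalue bound in an orthonormal basis for Part 1, and for Part 2 the expansion of $\alpha$ in a $g_2$-orthonormal coframe followed by the triangle inequality, Cauchy--Schwarz, Lemma \ref{norm_comparison_equiv} to pass from covector to vector ratios, and the $1$-norm versus $2$-norm bound yielding the factor $k!\binom{n}{k}n^p = \frac{n!\,n^p}{(n-k)!}$. The only cosmetic difference is that you diagonalize the symmetric Gram matrix of $g_1$ in a $g_2$-orthonormal basis, whereas the paper bounds the determinant of the change-of-basis matrix between orthonormal (co)frames; both give the same estimate.
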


\correction{}{A proof of this theorem can be found in the appendix.}

This theorem will be useful for proving the scaling properties for the trace and inverse inequalities. It could be tightened substantially by using more information about the metrics, but $C_{g_1,g_2}(M)$ and $D_{g_1,g_2}(M)$ are easy to compute and widely applicable. It also has an immediate corollary:
\begin{corollary}
    If $g_1$ and $g_2$ are two quasi-isometric Regge metrics on $M$, then, for all $p,k$, $L^2\Omega^{p,k}(M,g_1) = L^2\Omega^{p,k}(M,g_2)$ and $H(d)\Omega^k(M,g_1) = H(d)\Omega^k(M,g_2)$ as topological vector spaces.
\end{corollary}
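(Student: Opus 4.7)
The plan is to exploit Theorem~\ref{L2_equiv} twice (once in each direction) to obtain two-sided norm equivalences on a common subspace of smooth forms, and then invoke the elementary fact that equivalent norms on a common vector space produce canonically isomorphic completions.

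For the $L^2$ claim, I would apply Theorem~\ref{L2_equiv} once with the pair $(g_1,g_2)$ and once with the roles swapped. Quasi-isometry guarantees that all four constants $C_{g_1,g_2}(M)$, $C_{g_2,g_1}(M)$, $D_{g_1,g_2}(M)$, $D_{g_2,g_1}(M)$ are finite and positive---the $D$ bounds come for free from the first statement of Theorem~\ref{L2_equiv}. Combining the two applications produces a two-sided inequality of the form $c\|\alpha\|_{L^2(M,g_1)}^2 \le \|\alpha\|_{L^2(M,g_2)}^2 \le C\|\alpha\|_{L^2(M,g_1)}^2$ for any form for which either side is finite. In particular the sets $C^\infty\Omega^{p,k}(M) \cap L^2\Omega^{p,k}(M,g_1)$ and $C^\infty\Omega^{p,k}(M) \cap L^2\Omega^{p,k}(M,g_2)$ coincide, and completing this common smooth-$L^2$ subspace under either of the two equivalent norms produces the same topological vector space.

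For the $H(d)$ claim, the only additional work is to show that the two $H(d)$ norms are equivalent on that common smooth-$L^2$ subspace. The $L^2$ contribution is already handled, so it suffices to compare the supremum
\[
\sup_{\phi \in C^\infty_c\Omega^{n-k-1}_\mathbf{t}(M) \setminus \{0\}} \frac{\iprod{\iprod{d\alpha,\phi}}}{\|\phi\|_{L^2(M,g)}}
\]
for $g = g_1$ and $g = g_2$. The numerator $\iprod{\iprod{d\alpha,\phi}} = (-1)^{k+1}\int_M \alpha \wedge d\phi$ is a plain integral of an $n$-form and hence entirely metric-free, as already noted earlier in this section in the discussion of local integrability. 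The denominators differ by a bounded factor uniformly in $\phi$ by the $L^2$ equivalence from the previous paragraph, so the supremum itself scales by a bounded multiplicative factor when $g_1$ is swapped for $g_2$. Hence the two $H(d)$ norms are equivalent on the common smooth-$L^2$ subspace, and their completions agree.

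I do not anticipate a serious obstacle here, since Theorem~\ref{L2_equiv} and the metric-independence of $\int_M \alpha \wedge d\phi$ together do essentially all the work. The only subtlety is the bookkeeping needed to confirm that $H(d)\Omega^k(M,g_1)$ and $H(d)\Omega^k(M,g_2)$ really are completions of the same underlying normed vector space (with equivalent norms) before appealing to the uniqueness of the completion of a normed space; this rests solely on the pointwise norm comparison established in the proof of Theorem~\ref{L2_equiv} and on the observation about the intrinsic nature of $\int_M \alpha \wedge d\phi$.
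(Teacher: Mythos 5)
Your argument is correct and is exactly the route the paper intends: the paper states this as an ``immediate corollary'' of Theorem~\ref{L2_equiv} with no written proof, and the intended justification is precisely your two-sided application of that theorem (with the $D$ constants controlled by its first part) together with the metric-independence of $\int_M \alpha \wedge d\phi$ noted earlier in the section. Your filling-in of the completion/closure bookkeeping is sound and adds nothing that conflicts with the paper.
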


In what follows we will frequently need to make use of comparisons between the covariant derivatives of forms. The following lemma establishes a relationship between covariant derivatives in different metrics, which is key to proving the trace and inverse inequalities.

\begin{lemma}
\label{compare_cov}
Given two Regge metrics $g$ and $g'$ on a manifold $M$ and their Levi-Civita connections $\nabla$ and $\nabla'$, $\nabla - \nabla'$ is a bundle morphism $T^*(T \cap T')^p \otimes \Lambda^k(T \cap T') \to T^*(T \cap T')^{p+1} \otimes \Lambda^k(T \cap T')$ on each $T, T' \subseteq M$ where $g$ and $g'$ are smooth respectively. Furthermore, if $U \subseteq T \cap T'$ is measurable and contained in a coordinate chart with coordinates $\{x^i\}_{i = 1}^n$, then
\begin{align*}
\|\nabla - \nabla'\|_{\mathcal{L}(L^2\Omega^{p,k}(U,g),L^2\Omega^{p+1,k}(U,g))} \le \frac{3(k+p)n^\frac{7}{2}}{2}&\|G^{-1}\|_{L^\infty(U)}\|G\|_{L^\infty(U)}\\
&\cdot \big(\|G^{-1} - G'^{-1}\|_{L^\infty(U)}\|dG\|_{L^\infty(U,g)} \\
&\quad\quad + \|G'^{-1}\|_{L^\infty(U)}\|dG - dG'\|_{L^\infty(U,g)}\big),
\end{align*}
where $G_{ij} = \iprod{\pdiff{}{x^i},\pdiff{}{x^j}}_g$ is the coordinate expression for $g$ and $G'$ is defined similarly. For a matrix-valued one-form $A$, $\|A\|_{L^\infty(U,g)} := \esssup_{x \in U} \|A(x)\|_{2,g}$ and for a matrix-valued function, $\|A\|_{L^\infty(U)} := \esssup_{x \in U} \|A(x)\|_2$.
\end{lemma}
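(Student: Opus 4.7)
The plan is to exploit the classical tensoriality of the difference of two Levi-Civita connections, reducing the problem to a pointwise matrix bound that I then convert to intrinsic norms. First I would note that since the Leibniz contributions cancel between $\nabla$ and $\nabla'$, the operator $\nabla - \nabla'$ is $C^\infty$-linear in both of its arguments, hence a bundle morphism on the interior of $T\cap T'$ where both metrics are smooth. Its $L^2(U,g)\to L^2(U,g)$ operator norm therefore coincides with the essential supremum over $U$ of the pointwise $g$-operator norm of the induced fiberwise linear map, reducing the problem to a pointwise estimate.

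Next, in a coordinate chart I would invoke the Koszul formula $\Gamma^k_{ij} = \tfrac12 g^{kl}(\partial_i g_{jl} + \partial_j g_{il} - \partial_l g_{ij})$ together with the telescoping identity
\[
g^{kl}\partial g - g'^{kl}\partial g' = (g^{kl}-g'^{kl})\partial g + g'^{kl}\partial(g-g'),
\]
applied once to each of the three summands in Koszul, to express $C^k_{ij} := \Gamma^k_{ij} - \Gamma'^k_{ij}$ as a sum of six products, each involving either $(G^{-1}-G'^{-1})$ with $\partial G$ or $G'^{-1}$ with $\partial(G-G')$; this produces the $3/2$ prefactor and the two parallel summands appearing in the statement. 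Iterating the defining formula for the covariant derivative then shows that on $T^*(T\cap T')^p\otimes\Lambda^k$ the connection acts independently in each of the $p+k$ tensor or form slots, so $\nabla - \nabla'$ contributes a single contraction with $C$ in each slot:
\[
((\nabla-\nabla')_Y\alpha)(Z_1,\dots,Z_p;X_1,\dots,X_k) = -\sum_{i}\alpha(\dots,C(Y,Z_i),\dots) - \sum_{j}\alpha(\dots,C(Y,X_j),\dots).
\]
The pointwise $g$-operator norm of $\nabla-\nabla'$ is thus bounded by $(p+k)$ times the $g$-operator norm of $C$ regarded as a $(1,2)$-tensor, explaining the $(p+k)$ prefactor.

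The last and most delicate step is the conversion from coordinate matrix norms to intrinsic $g$-norms. A change of basis from coordinates to a $g$-orthonormal frame costs a factor of $\|G\|^{1/2}$ or $\|G^{-1}\|^{1/2}$ per tensor index (depending on whether the index is up or down), while replacing Euclidean $2$-norms by coordinate max-entry norms costs at most $\sqrt{n}$ per summed index. Applied to the three indices of $C^k_{ij}$ plus the extra one-form index of $dG$ (whose intrinsic $L^\infty(U,g)$-norm differs from its coordinate max-entry norm by an additional $\|G^{-1}\|^{1/2}$), the dimensional factors accumulate to $n^{7/2}$ and the metric-comparison factors collapse to $\|G\|\,\|G^{-1}\|$. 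Taking the essential supremum over $U$ then yields the claimed bound. The hard part will be precisely this accounting: I need to carefully track every power of $n$, $\|G\|^{1/2}$, and $\|G^{-1}\|^{1/2}$ arising from each change of basis and each norm comparison, and consolidate them into the specific exponents and placements appearing on the right-hand side.
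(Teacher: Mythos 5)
Your proposal follows essentially the same route as the paper's proof: tensoriality of $\nabla-\nabla'$, the Koszul formula with the telescoping split producing the $\tfrac{3}{2}$ prefactor and the two parallel summands, the slot-by-slot action on $T^*{}^p\otimes\Lambda^k$ giving the $(k+p)$ factor, and conversion to intrinsic norms via a $g$-orthonormal frame (the paper uses the Cholesky factor $E$ of $G^{-1}$, with $\|E\|_2^2=\|G^{-1}\|_2$, to make your ``$\|G\|^{1/2}$ per index'' bookkeeping precise). The accounting you defer is exactly the content of the paper's chain of estimates accumulating $n^{5/2}$ from $\|\cdot\|_\infty\le\sqrt{n}\|\cdot\|_2$ conversions plus a final factor of $n$ from bounding the operator norm by the maximum over frame pairs, so the plan is correct and complete in outline.
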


\correction{}{A proof of this lemma can be found in the appendix.}

\begin{remark}
While the fact that the difference between Christoffel symbols is bounded by $\|G - G'\|$ and $\|dG - dG'\|$ is fairly obvious, this theorem makes explicit how they relate to bounds on the $L^2$ operator norm of $\nabla - \nabla'$. Additionally, if $g$ is piecewise flat, there exists a coordinate chart $U \subseteq T \cap T'$ around any point $x \in T \cap T'$ such that $G = \lambda I$ for a constant $\lambda > 0$ and $\|G'^{-1}\|_{L^\infty(U)} = 1$, yielding a much simpler, and intrinsic, expression:
\begin{equation}\label{simplecomparecov}\|\nabla - \nabla'\|_{\mathcal{L}(L^2\Omega^{p,k}(U,g),L^2\Omega^{p+1,k}(U,g))} \le \frac{3(k+p)n^\frac{7}{2}}{2}\|dG'\|_{L^\infty(U,g)} = \frac{3(k+p)n^\frac{7}{2}}{2}\|\nabla g'\|_{L^\infty(U,g)}\end{equation} 

Here $g'$ is understood to be a piecewise smooth $(0,2)$-tensor and $\nabla g'$ a $(0,3)$-tensor, and $\|\nabla g'\|_{L^\infty(U,g)} := \esssup_{x \in U}\|\nabla g'|_x\|_g$.
\end{remark}

\subsection{Trace and Inverse Inequalities}

With all preparations done, we are finally ready to prove trace and inverse inequalities for simplices endowed with an arbitrary Riemannian metric. The usual scaling arguments are more complicated than usual, so complete proofs are provided. In this section, $T$ is a single $n$-simplex of $\mathcal{T}$, so $g$ is smooth on $T$.

\begin{theorem}[$H^1$ Trace Inequality for Riemannian Simplices]
\label{H1tracethrm}\;\\
Suppose $\alpha \in H^1\Omega^{p,k}(T,g)$, and assume that there exists an orientation-preserving diffeomorphism $f: \hat T \to T$ where $\hat T \subset \mathbb{R}^n$ is the standard simplex. Then $i_{\partial T}^*\alpha$ is well-defined and belongs to $L^2\Omega^{p,k}(\partial T,g)$, and there exists $\hat{C}$ depending only on $n$, $p$, and $k$ such that
\begin{align*}
\|&i_{\partial T}^*\alpha\|_{L^2(\partial T,g)} \\&\le \hat C  C_{g,\hat \delta}(T)^{k + p} C_{\hat \delta,g}(T)^{k+p + \frac{1}{2}}\sqrt{D_{g,\hat \delta}(T)D_{\hat \delta,g}(T)} \\
&\quad\cdot \bigg[\left(1 + C_{\hat \delta, g}(T)^{k+p + 1}C_{g,\hat \delta}(T)^{k+p + 1}\sqrt{D_{\hat \delta,g}(T)D_{g,\hat \delta}(T)} \|\hat \nabla g\|_{L^\infty(T,\hat \delta)}\right)\|\alpha\|_{L^2(T,g)} \\
&\quad\quad + C_{g,\hat \delta}(T)\|\nabla \alpha\|_{L^2(T,g)}\bigg],
\end{align*}
where $\hat \delta = f^{-*} \delta$ is the metric induced by $f$ and the standard Euclidean metric $\delta$ on $\hat T$, and $\hat\nabla$ is the associated connection.
\end{theorem}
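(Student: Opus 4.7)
The plan is to reduce to the classical $H^1$ trace inequality on the standard Euclidean simplex by way of the built-in isometry between $(T,\hat\delta)$ and $(\hat T,\delta)$, and then exchange $\hat\delta$-norms for $g$-norms using the results established earlier in this section.

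Since $\hat\delta = f^{-*}\delta$, the map $f:(\hat T,\delta)\to (T,\hat\delta)$ is an isometry, so the connection $\hat\nabla$ on $(T,\hat\delta)$ corresponds under $f$ to the flat Euclidean connection on $(\hat T,\delta)$, and pullback preserves every $L^2$ and $H^1$ norm computed with $\hat\delta$. Applying the classical $H^1$ trace inequality on the fixed Euclidean simplex $\hat T$ componentwise in Cartesian coordinates to $f^*\alpha$ yields an absolute constant $\hat C$ (depending only on $\hat T$) with
\[
\|i_{\partial T}^*\alpha\|_{L^2(\partial T,\hat\delta)} \leq \hat C\bigl(\|\alpha\|_{L^2(T,\hat\delta)} + \|\hat\nabla\alpha\|_{L^2(T,\hat\delta)}\bigr).
\]
In particular, the trace is well-defined in $L^2(\partial T,\hat\delta)$ and, by the corollary to Theorem \ref{L2_equiv}, also in $L^2(\partial T,g)$.

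Next I would swap every $\hat\delta$-norm for a $g$-norm by successive applications of Theorem \ref{L2_equiv}. On the $(n-1)$-dimensional manifold $\partial T$, converting the left-hand side from $\hat\delta$ to $g$ introduces a factor of order $C_{\hat\delta,g}(\partial T)^{k+p}\sqrt{D_{g,\hat\delta}(\partial T)}$; on $T$, converting $\|\alpha\|_{L^2(T,\hat\delta)}$ to $\|\alpha\|_{L^2(T,g)}$ introduces $C_{g,\hat\delta}(T)^{k+p}\sqrt{D_{\hat\delta,g}(T)}$; monotonicity of the $C$- and $D$-quantities under enlargement of the domain then lets me uniformly replace $\partial T$ by $T$ in these factors.

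The covariant-derivative term is the most delicate. I would split $\hat\nabla\alpha = \nabla\alpha + (\hat\nabla-\nabla)\alpha$ and treat the two pieces separately. For $\nabla\alpha$, which is a $(p+1,k)$-valued form, Theorem \ref{L2_equiv} bounds $\|\nabla\alpha\|_{L^2(T,\hat\delta)}$ by a constant times $C_{g,\hat\delta}^{k+p+1}\sqrt{D_{\hat\delta,g}}\,\|\nabla\alpha\|_{L^2(T,g)}$, producing the trailing $C_{g,\hat\delta}\|\nabla\alpha\|_{L^2(T,g)}$ in the stated bound. For the difference of connections I would use the coordinates on $T$ supplied by $f^{-1}$: in these coordinates $\hat\delta$ has Gram matrix $G=I$ with $dG=0$, so the simplified estimate \eqref{simplecomparecov} from the remark following Lemma \ref{compare_cov} applies and gives
\[
\|(\hat\nabla-\nabla)\alpha\|_{L^2(T,\hat\delta)} \leq \tfrac{3(k+p)n^{7/2}}{2}\,\|\hat\nabla g\|_{L^\infty(T,\hat\delta)}\,\|\alpha\|_{L^2(T,\hat\delta)}.
\]
Converting the remaining $\|\alpha\|_{L^2(T,\hat\delta)}$ back to $\|\alpha\|_{L^2(T,g)}$ by a further application of Theorem \ref{L2_equiv} produces the inner factor $C_{\hat\delta,g}^{k+p+1}C_{g,\hat\delta}^{k+p+1}\sqrt{D_{\hat\delta,g}D_{g,\hat\delta}}$ that multiplies $\|\hat\nabla g\|_{L^\infty(T,\hat\delta)}$, and combining this with the pure $\|\alpha\|_{L^2(T,g)}$ contribution coming directly from the Euclidean trace bound yields the $(1+\cdots)$ structure on the right-hand side. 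The genuinely substantive content is isolated in this choice of $f^{-1}$ coordinates, which both brings $\hat\delta$ to the identity matrix (so that the flat-metric form of Lemma \ref{compare_cov} applies) and produces a fully intrinsic bound. The rest, and in my estimation the \emph{main obstacle}, is the careful bookkeeping of constants through the three distinct applications of Theorem \ref{L2_equiv} (on $\partial T$ at rank $(p,k)$, on $T$ at rank $(p,k)$, and on $T$ at rank $(p+1,k)$) combined with the Lemma \ref{compare_cov} bound, so as to reproduce the exact exponents and $\sqrt{D}$ structure appearing in the statement.
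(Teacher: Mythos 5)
Your overall route is the same as the paper's: pull back to the reference simplex, invoke the classical componentwise trace theorem on $(\hat T,\delta)$, convert all norms to $g$-norms via Theorem \ref{L2_equiv}, and absorb the discrepancy between $\hat\nabla$ and $\nabla$ using the flat-metric form \eqref{simplecomparecov} of Lemma \ref{compare_cov} in the $f^{-1}$ coordinates. The treatment of the covariant-derivative term (splitting $\hat\nabla\alpha=\nabla\alpha+(\hat\nabla-\nabla)\alpha$ and converting back to $g$-norms) is a harmless reordering of the paper's steps and would work.

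However, there is one genuine gap: your claim that ``monotonicity of the $C$- and $D$-quantities under enlargement of the domain'' lets you replace $\partial T$ by $T$ in the factors coming from the left-hand side. This is true for $C_{\hat\delta,g}(\partial T)\le C_{\hat\delta,g}(T)$, since tangent vectors to $\partial T$ are tangent vectors to $T$, but it is \emph{false} for $D_{g,\hat\delta}(\partial T)$: by the paper's definition, $D_{g_1,g_2}(\partial T)$ is computed with the induced $(n-1)$-dimensional volume form of $\partial T$, not the restriction of the $n$-dimensional one, and there is no inequality $D_{g,\hat\delta}(\partial T)\le D_{g,\hat\delta}(T)$ in general. (Take $g$ to compress $\hat\delta$-lengths only in the direction conormal to a facet $e$ by a factor $\epsilon$: then $D_{g,\hat\delta}(T)\approx\epsilon$ while $D_{g,\hat\delta}(e)\approx 1$.) The correct comparison, which the paper proves by factoring $dV_g=\theta^1\wedge\star_g\theta^1$ with $\theta^1$ a $g$-unit conormal and estimating $\|\theta^1\|_{\hat\delta}$ from below, is
\[
D_{g,\hat\delta}(\partial T)\;\le\; C_{\hat\delta,g}(T)\,D_{g,\hat\delta}(T),
\]
and the extra factor $C_{\hat\delta,g}(T)$ is precisely the origin of the half-integer exponent $k+p+\tfrac12$ on $C_{\hat\delta,g}(T)$ in the stated bound. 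Without this step your bookkeeping would produce the exponent $k+p$ and the claimed inequality would not follow; so this is a missing lemma rather than a constant-tracking detail, and it needs to be supplied.
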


\begin{remark}
In the case that $p = k = 0$, $\hat\nabla\alpha = d\alpha = \nabla\alpha$, and the term involving $\|\hat \nabla g\|$ can be removed as it is measuring the difference between $\nabla$ and $\hat \nabla$.
\end{remark}

\begin{proof}[Proof of Theorem \ref{H1tracethrm}]
We will take as given that there is a trace theorem for differential forms on $(\hat T, \delta)$. This is essentially because a form in this manifold is in $H^1$ if and only if each of its coefficients is in $H^1$ in the standard coordinates, and a trace theorem holds for each coefficient independently. Thus, setting $\hat \alpha = f^*\alpha$, $i_{\partial \hat T}^* \hat \alpha$ is well-defined and
\begin{equation}\label{basictraceeqn}
\|i_{\partial \hat T}^*\hat \alpha\|^2_{L^2(\partial \hat T,\delta)} \le \hat C \|\hat \alpha\|^2_{H^1(\hat T, \delta)}.
\end{equation}
Let $i_{\partial T}^*\alpha := f^{-*}i_{\partial \hat T}^*\hat \alpha$. This definition is consistent when $\alpha$ is continuous, because $f$ restricts to an embedding $e \hookrightarrow M$ for each facet $e \subset \partial T$. We will bound both sides of this inequality. Firstly, by Theorem \ref{L2_equiv},
\begin{align*}
%\|i_{\partial \hat T}^*\hat\alpha\|^2_{L^2(\partial \hat T,\delta)} &= \int_{\partial \hat T} \|f^*i_{\partial T}^* \alpha\|_\delta^2 dS_\delta = \int_{\partial T} \|i_{\partial T}^*\alpha\|_{f^{-*} \delta}^2 f^{-*}dS_\delta \\ &= \int_{\partial T} \|i_{\partial T}^*\alpha\|_{\hat \delta}^2 dS_{\hat \delta} = \|i_{\partial T}^*\alpha\|_{L^2(\partial T,\hat \delta)}^2 \\
\|i_{\partial \hat T}^*\hat\alpha\|^2_{L^2(\partial \hat T,\delta)} &= \|i_{\partial T}^*\alpha\|_{L^2(\partial T,\hat \delta)}^2 \\
&\ge \frac{1}{N_1^2C_{\hat \delta,g}(\partial T)^{2k + 2p}D_{g,\hat \delta}(\partial T)}\|i_{\partial T}^*\alpha\|_{L^2(\partial T,g)}^2,
\end{align*}
where $N_1$ is a constant depending only on $n$, $p$, and $k$.
Secondly,
\begin{align*}
%\|\hat{\alpha}\|^2_{H^1(\hat T, \delta)} &= \int_{\hat T} \|f^*\alpha\|^2_\delta dV_\delta + \int_{\hat T} \|\nabla_\delta f^*\alpha\|_\delta^2dV_\delta\\
%&= \int_{T} \|\alpha\|_{\hat \delta}^2 dV_{\hat \delta} + \int_T \|\hat\nabla\alpha\|^2_{\hat \delta}dV_{\hat \delta}\\
\|\hat{\alpha}\|^2_{H^1(\hat T, \delta)} &= \|\alpha\|^2_{L^2(T,\hat \delta)} + \|\hat\nabla\alpha\|^2_{L^2(T,\hat \delta)}\\
&\le N_2^2C_{g,\hat \delta}(T)^{2k + 2p}D_{\hat \delta,g}(T)\left(\|\alpha\|_{L^2(T,g)}^2 + C_{g,\hat \delta}(T)^2\|\hat\nabla\alpha\|_{L^2(T,g)}^2\right),
\end{align*}
where $N_2$ is another constant depending only on $n$, $p$, and $k$.
Plugging both of these inequalities into inequality (\ref{basictraceeqn}), we obtain
\begin{equation}\label{H1traceintermediate1}
\begin{split}
\|i_{\partial T}^*\alpha\|^2_{L^2(\partial T, g)} &\le \hat C N_1^2N_2^2C_{\hat \delta,g}(\partial T)^{2(k+p)}C_{g,\hat \delta}(T)^{2(k+p)}D_{\hat \delta,g}(T)D_{g,\hat \delta}(\partial T) \\ &\quad\cdot \left(\|\alpha\|_{L^2(T,g)}^2 + C_{g,\hat \delta}(T)^2\|\hat\nabla\alpha\|_{L^2(T,g)}^2\right).
\end{split}
\end{equation}
It is clearly the case that $C_{g_1,g_2}(\partial T) \le C_{g_1,g_2}(T)$ for any two continuous metrics $g_1, g_2$ on $T$. We can also bound $D_{g,\hat \delta}(\partial T)$ in terms of $D_{g,\hat \delta}(T)$. In a relatively open neighborhood $U \subset T$ of a point $x_0$ of $\partial T$ which is contained in a smooth component of $\partial T$, $dV_{g}$ can be written as $\theta^1\wedge\star_g\theta^1$, where $\|\theta^1\|_g = 1$ and $\ker \theta^1|_x = T_x\partial T$ for all $x \in U \cap \partial T$. Evidently, $i_{\partial T}^*\star_g \theta^1$ is the $g$ volume form on $U \cap \partial T$.

At each $x$ in $U$, we have the following inequality:
\begin{align*}
\sup_{W_1,\dots,W_n \in T_xT\backslash\{0\}}&\frac{|dV_{g}(W_1,\dots,W_n)|}{\|W_1\|_{\hat \delta}\dots\|W_n\|_{\hat \delta}} \\&\ge \sup_{W_2,\dots,W_n \in \ker \theta^1|_x\backslash\{0\}}\sup_{W_1 \notin \ker \theta^1|_x}\frac{|\theta^1(W_1)| \cdot |\star_g\theta^1(W_2,\dots,W_n)|}{\|W_1\|_{\hat \delta}\|W_2\|_{\hat \delta} \dots\|W_n\|_{\hat \delta}}\\
&= \|\theta^1|_x\|_{\hat \delta}\|\star_g\theta^1|_x\|_{\hat \delta}.
\end{align*}
Therefore, we have
\[
D_{g,\hat \delta}(U) \ge D_{g,\hat \delta}(U \cap \partial T)\inf_{x \in U}\|\theta^1|_x\|_{\hat \delta}  \ge \frac{D_{g,\hat \delta}(U \cap \partial T)}{C_{\hat \delta,g}(U)}. 
\]
This local inequality implies a global inequality
\[
D_{g,\hat \delta}(\partial T) \le C_{\hat \delta,g}(T)D_{g,\hat \delta}(T).
\]
Plugging this into inequality (\ref{H1traceintermediate1}), we derive
\begin{align*}
\|i_{\partial T}^*\alpha\|_{L^2(\partial T,g)}^2 \le \hat C^2 N_1^2N_2^2 &C_{\hat \delta,g}(T)^{2(k+p) + 1}C_{g,\hat \delta}(T)^{2(k+p)} \\&\cdot D_{\hat \delta,g}(T)D_{g,\hat \delta}(T)\left(\|\alpha\|_{L^2(T,g)}^2 + C_{g,\hat \delta}(T)^2\|\hat\nabla\alpha\|_{L^2(T,g)}^2\right),
\end{align*}
or, more conveniently,
\begin{align}
\|i_{\partial T}^*\alpha\|_{L^2(\partial T,g)} \le \hat C N_1N_2 & C_{\hat \delta,g}(T)^{k+p + \frac{1}{2}}C_{g,\hat \delta}(T)^{k+p} \notag \\
&\cdot \sqrt{D_{\hat \delta,g}(T)D_{g,\hat \delta}(T)}\left(\|\alpha\|_{L^2(T,g)} + C_{g,\hat \delta}(T)\|\hat\nabla\alpha\|_{L^2(T,g)}\right).\label{tracebounds}
\end{align}
Lastly, we can use Lemma \ref{compare_cov} to assert that 
\[
\|\hat\nabla \alpha\|_{L^2(T,g)} \le \|\nabla\alpha\|_{L^2(T,g)} + \|\hat\nabla - \nabla\|_{\mathcal{L}(L^2\Omega^{p,k}(T,g),L^2\Omega^{p+1,k}(T,g))}\|\alpha\|_{L^2(T,g)}
.\]

Since $\hat \delta$ is flat on $T$, we have by inequality \ref{simplecomparecov} that $\|\hat \nabla - \nabla\|_{\mathcal{L}(L^2\Omega^{p,k}(T,\hat \delta),L^2\Omega^{p+1,k}(T,\hat \delta)} \le \frac{3(k+p)n^\frac{7}{2}}{2}\|\hat \nabla g\|_{L^\infty(T,\hat \delta)}$. To apply this inequality, we'll use
\begin{align*}
&\|\hat \nabla - \nabla\|_{\mathcal{L}(L^2\Omega^{p,k}(T,g),L^2\Omega^{p+1,k}(T,g))} \\ 
&\le \frac{\sup_{\beta \in L^2\Omega^{p+1,k}(T,g) \backslash \{0\}} \frac{\|\beta\|_{L^2(T,g)}}{\|\beta\|_{L^2(T,\hat \delta)}}}{\inf_{\beta \in L^2\Omega^{p,k}(T,g) \backslash \{0\}}\frac{\|\beta\|_{L^2(T,g)}}{\|\beta\|_{L^2(T,\hat \delta)}}} \|\hat \nabla - \nabla\|_{\mathcal{L}(L^2\Omega^{p,k}(T,\hat \delta),L^2\Omega^{p+1,k}(T,\hat \delta))} \\
&\le \hat C_1\hat C_2 C_{\hat \delta,g}(T)^{k+p + 1}C_{g,\hat \delta}^{k+p}(T)\sqrt{D_{\hat \delta,g}(T)D_{g,\hat \delta}(T)}\|\hat \nabla - \nabla\|_{\mathcal{L}(L^2\Omega^{p,k}(T,\hat \delta),L^2\Omega^{p+1,k}(T,\hat \delta))} \\
&\le \hat C_1 \hat C_2 \frac{3(k+p)n^\frac{7}{2}}{2}C_{\hat \delta,g}(T)^{k+p+1}C_{g,\hat \delta}(T)^{k+p}\sqrt{D_{\hat \delta,g}(T)D_{g,\hat \delta}(T)} \|\hat \nabla g\|_{L^\infty(T,\hat \delta)},
\end{align*}
where $\hat C_1$ and $\hat C_2$ depend only on $n$, $p$, and $k$.

Plugging this into inequality \ref{tracebounds} and grouping together constant multiples, we obtain the desired bound.

\end{proof}
Next is the well-known inverse inequality.

\begin{theorem}[Inverse Inequality for Riemannian Simplices]
Let $\hat T$, $T$, $\delta$, $\hat \delta$, $g$, $\nabla$, $\hat{\nabla}$, and $f$ be as in Theorem \ref{H1tracethrm}, and let $\hat V \subset H(d)\Omega^k(\hat T,\delta)$ be a finite-dimensional subspace and let $V_h = f^{-*}(\hat V) \subset H(d)\Omega^k(T,g)$. Then there exists $\hat C'$ depending only on $n$, $k$, and $\hat V$ such that for all $u_h \in V_h$,
\[
\|du_h\|_{L^2(T,g)} \le \hat C' C_{\hat \delta,g}(T)^{k + 1}C_{g,\hat \delta}(T)^{k}\sqrt{D_{\hat \delta,g}(T)D_{g,\hat \delta}(T)}\|u_h\|_{L^2(T,g)}.
\]
Likewise, if $\hat{V} \subset H^1\Omega^{p,k}(\hat T,\delta)$ is finite-dimensional and $V_h = f^{-*}(\hat{V}) \subset H^1\Omega^{p,k}(T,g)$, then there exists $\hat C$ depending only on $n$, $p$, $k$, and $\hat V$ such that for all $u_h \in V_h$,
\begin{align*}
\|\nabla u_h\|_{L^2(T,g)} \le \hat CC_{ \hat \delta, g}(T)^{k+p + 1}&C_{g,\hat \delta}(T)^{k+p}\sqrt{D_{\hat \delta,g}(T)D_{g,\hat \delta}(T)}[1 + \|\hat \nabla g\|_{L^2(T,\hat \delta)}]\|u_h\|_{L^2(T,g)}.
\end{align*}

\end{theorem}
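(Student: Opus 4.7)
The plan is to mirror the template of Theorem \ref{H1tracethrm}: pull $u_h$ back to $\hat u_h = f^* u_h$ on the reference simplex, invoke the classical (flat, finite-dimensional) inverse inequality on $(\hat T,\delta)$, and then convert the resulting $\delta$-norms on $\hat T$ back to $g$-norms on $T$ via two applications of Theorem \ref{L2_equiv}. Since $\hat V$ is finite-dimensional and $\delta$ is a fixed smooth metric, all norms on $\hat V$ are equivalent, so one has constants $\hat C'$ and $\hat C$ with $\|d\hat u_h\|_{L^2(\hat T,\delta)} \le \hat C' \|\hat u_h\|_{L^2(\hat T,\delta)}$ and $\|\hat \nabla \hat u_h\|_{L^2(\hat T,\delta)} \le \hat C \|\hat u_h\|_{L^2(\hat T,\delta)}$ for free.

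For the first inequality, I would first use that exterior differentiation commutes with pullback, so $\|d\hat u_h\|_{L^2(\hat T,\delta)} = \|du_h\|_{L^2(T,\hat\delta)}$ and $\|\hat u_h\|_{L^2(\hat T,\delta)} = \|u_h\|_{L^2(T,\hat\delta)}$. Applying Theorem \ref{L2_equiv} to the $(k+1)$-form $du_h$ gives $\|du_h\|_{L^2(T,g)} \le N_1 C_{\hat\delta,g}(T)^{k+1} \sqrt{D_{g,\hat\delta}(T)}\, \|du_h\|_{L^2(T,\hat\delta)}$, and applying it to the $k$-form $u_h$ gives $\|u_h\|_{L^2(T,\hat\delta)} \le N_2 C_{g,\hat\delta}(T)^{k}\sqrt{D_{\hat\delta,g}(T)}\,\|u_h\|_{L^2(T,g)}$, where $N_1,N_2$ depend only on $n$ and $k$. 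Chaining the three estimates and absorbing the dimensional constants into $\hat C'$ produces the stated bound.

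For the second inequality the new complication is that $\nabla$ is the Levi-Civita connection of $g$ rather than of $\hat\delta$, so I would split via the triangle inequality
\[
\|\nabla u_h\|_{L^2(T,g)} \le \|\hat\nabla u_h\|_{L^2(T,g)} + \|(\nabla - \hat\nabla) u_h\|_{L^2(T,g)}.
\]
Because $f:(\hat T,\delta) \to (T,\hat\delta)$ is by construction an isometry, $f^*$ intertwines $\hat\nabla$ with the flat connection on $\hat T$, so the first term is handled exactly as in the $H(d)$ case, now with Theorem \ref{L2_equiv} applied to a $(p+1,k)$-form and a $(p,k)$-form. This delivers the full prefactor $C_{\hat\delta,g}(T)^{k+p+1} C_{g,\hat\delta}(T)^{k+p}\sqrt{D_{\hat\delta,g}(T) D_{g,\hat\delta}(T)}$ times $\|u_h\|_{L^2(T,g)}$. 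For the second term I would reuse the connection-comparison estimate already packaged in the trace-inequality proof: Lemma \ref{compare_cov}, in its flat-reference form (\ref{simplecomparecov}), bounds the operator norm of $\nabla - \hat\nabla$ by the same prefactor times $\|\hat\nabla g\|$. Adding the two contributions produces the $[1+\|\hat\nabla g\|]$ factor in the stated bound.

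The main obstacle is the bookkeeping: matching form degrees ($k$ versus $k+1$ for the $d$ step; $(p,k)$ versus $(p+1,k)$ for the $\nabla$ step) to the correct powers of $C$ and the correct pairing of $D$-factors each time Theorem \ref{L2_equiv} is invoked, and then lining up constants so the final prefactor matches the one claimed. I would also flag that Lemma \ref{compare_cov} naturally yields an $L^\infty$ bound on $\hat\nabla g$, whereas the theorem statement writes an $L^2$ norm; either this is a typo to be corrected to $L^\infty$, or it requires a separate estimate exploiting that $g$ is smooth on $T$, but this does not affect the outline of the argument.
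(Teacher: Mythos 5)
Your proposal is correct and follows essentially the same route as the paper's proof: norm equivalence on the finite-dimensional reference space, conversion of $\hat\delta$-norms to $g$-norms via Theorem \ref{L2_equiv} with the degree-matched powers of $C$ and $D$, and the triangle-inequality split of $\nabla u_h$ into $\hat\nabla u_h$ plus the connection difference controlled by Lemma \ref{compare_cov} and (\ref{simplecomparecov}). Your flag about the $\|\hat\nabla g\|_{L^2(T,\hat\delta)}$ in the statement is also well taken --- the proof and Corollary \ref{shapereginverse} both produce an $L^\infty$ norm, so that is indeed a typo rather than a gap in your argument.
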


\begin{proof}
The proofs of both claims are quite similar, so we will prove the second claim and then explain how the same proof idea carries over to the first claim. Let $\hat u_h = f^*u_h$. We start with the observation that any two norms on the finite-dimensional subspace $\hat V$ are equivalent, so there exists $\hat C$ independent of $\hat u_h$ such that 
\begin{equation}\label{basicinverseeqn}\|\nabla_\delta\hat u_h\|_{L^2(\hat T, \delta)} \le \|\hat u_h\|_{H^1(\hat T, \delta)} \le \hat C \|\hat{u}_h\|_{L^2(\hat T, \delta)}.\end{equation}
We'll bound both sides.
\begin{align*}
\|\nabla_\delta\hat u_h\|_{L^2(\hat T, \delta)} &= \|\hat\nabla u_h\|_{L^2(T,\hat \delta)} \\
&\ge \frac{1}{N_1C_{\hat \delta,g}(T)^{k+p + 1}\sqrt{D_{g,\hat \delta}(T)}}\|\hat\nabla u_h\|_{L^2(T,g)}\\
&\ge \frac{1}{N_1C_{\hat \delta,g}(T)^{k+p + 1}\sqrt{D_{g,\hat \delta}(T)}} \\
&\quad\quad\quad\quad\cdot \left[\|\nabla u_h\|_{L^2(T,g)} - \|\hat\nabla - \nabla\|_{\mathcal{L}(L^2\Omega^{p,k}(T,g),L^2\Omega^{p+1,k}(T,g))}\|u_h\|_{L^2(T,g)}\right],
\\
\|\hat u_h\|_{L^2(\hat T, \delta)} &= \|u_h\|_{L^2(T,\hat \delta)}\\
&\le N_2C_{g,\hat \delta}(T)^{k+p}\sqrt{D_{\hat \delta,g}(T)}\|u_h\|_{L^2(T,g)}.
\end{align*}
Plugging these inequalities into inequality (\ref{basicinverseeqn}), we get 
\begin{align*}
\|\nabla u_h\|_{L^2(T,g)} \le \Big[\hat C N_1N_2&C_{\hat \delta,g}(T)^{k+p + 1}C_{g,\hat \delta}(T)^{k+p}\sqrt{D_{\hat \delta,g}(T)D_{g,\hat \delta}(T)} \\
&+ \|\hat\nabla -\nabla\|_{\mathcal{L}(L^2\Omega^{p,k}(T,g),L^2\Omega^{p+1,k}(T,g))}\Big]\|u_h\|_{L^2(T,g)}.
\end{align*}

And, just as in the proof of Theorem \ref{H1tracethrm}, we can apply the inequality
\begin{align*}
\|\hat \nabla - \nabla\|_{\mathcal{L}(L^2\Omega^{p,k}(T,g),L^2\Omega^{p+1,k}(T,g))} \le \hat C_1 \hat C_2 \frac{3(k+p)n^\frac{7}{2}}{2}&C_{\hat \delta,g}(T)^{k+p + 1}C_{g,\hat \delta}(T)^{k+p}\\&\cdot\sqrt{D_{\hat \delta,g}(T)D_{g,\hat \delta}(T)} \|\hat \nabla g\|_{L^\infty(T,\hat \delta)},
\end{align*}
which gives the desired bound.

The proof of the $H(d)$ inverse inequality is nearly identical with $\nabla$ replaced with $d$, except that there is no difference between $\hat d$ and $d$.
\end{proof}

It is helpful to consider how these theorems relate to coordinate expressions. If $M = \mathbb{R}^n$ and $g$ is the Euclidean metric then, expressed in coordinates on the reference simplex, we have $\hat\nabla g = \hat\nabla (JJ^\intercal)$ where $J = df$ is the Jacobian, and $C_{g,\hat\delta}(T) = \sup_{x \in \hat{T}}\|J(x)\|_2$, $C_{\hat\delta,g}(T) = \sup_{x \in \hat{T}}\|J^{-1}(x)\|_2$, $D_{g,\hat\delta}(T) = \sup_{x \in \hat{T}} \det(J(x))$, and $D_{\hat\delta,g}(T) = \sup_{x \in \hat{T}}\det(J^{-1}(x))$. Scaling arguments for the case when $g$ is the Euclidean metric are already well established in the literature, even when $f$ is not affine~\cite{bernardi1989optimal,kawecki2019finiteelementtheorycurved,evans2011icesreport1117}. However, usually such arguments lack a term analogous to the $\hat\nabla g$ term because they consider real-valued functions rather than tensors.

\subsection{Riemannian Shape-Regularity}\label{shapereg_section}

The issue of mesh quality on a manifold is subtle, as not only the geometry of a simplex, but also the quality of its embedding map $f_{h,T}$ must be considered. For the purpose of computational ease, we have chosen to introduce  new definitions of shape-regularity and shape constant. These \correction{are equivalent to}{generalize} the usual definitions\correction{ if each $T$ is a flat simplex in some Euclidean space and the map $f_{h,T}$ is affine}{}, and they have some similarities in their role in the trace and inverse inequalities.

\begin{definition}\label{shaperegdef}
A family of manifolds with Regge metrics $\{(M_h,\mathcal{T}_h,g_h)\}_{h \in S}$ (where $S \subset \mathbb{R}$) has the \emph{Riemannian shape-regularity property} if there exists a number $0 < K < \infty$ independent of $h$ and orientation-preserving diffeomorphisms $f_{h,T}: \hat T \to T$ for each $T \in \mathcal{T}_h$ such that $C_{g_h,\hat \delta}(T)C_{\hat \delta,g_h}(T) \le K$ for all $h \in S, T \in \mathcal{T}_h$.
\end{definition}

Here, as before, $\hat{T}$ denotes the standard simplex and $\hat \delta = f_{h,T}^{-*}\delta$ is the pushforward of the standard metric on $\hat T$.

The quantity $C_{g_h,\hat \delta}(T)C_{\hat \delta,g_h}(T)$ simultaneously measures how much $g_h$ differs from a scaled version of the Euclidean metric $\hat \delta$, and how much $f_{h,T}$ differs from a pure rigid motion and a scaling. As the next lemma will show, \correction{if a family of meshes on a compact manifold is piecewise-affine in some smooth curvilinear coordinates and shape-regular (in the Euclidean sense) when viewed in those coordinates, and the metrics $g_h$ differ from a smooth metric $g$ by a bounded amount, then the family of meshes is also shape-regular in the Riemannian sense.}{Euclidean shape-regularity in coordinates is sufficient for Riemannian shape-regularity on compact manifolds. However, this implication is one-way: a family of meshes can fail to have any coordinates in which it is Euclidean shape-regular, and still be Riemannian shape-regular. For instance, there may be no coordinates so that, expressed in coordinates, $f_{h,T}$ is affine for all $h,T$.}

\begin{lemma}
\label{shapereg}
Suppose $(M,g)$ is a compact Riemannian manifold with corners, and let $\{U_i\}_{i = 1}^N$ be a finite cover of $M$ by open sets each possessing a smooth embedding $\phi_i: U_i \to \mathbb{R}^{K_i}$ for some integer $K_i$, and let $V_i \subset U_i$ be another cover where each $V_i$ is closed. If $\{(M,\mathcal{T}_h,g_h)\}_{h \in S}$ is a manifold supporting a family of Regge metrics such that

\begin{enumerate}
\item{there exists $C < \infty$ such that $C_{g,g_h}(M)C_{g_h,g}(M) \le C$ for all $h \in S$,}
\item{every $n$-simplex $T$ is contained completely in $V_i$ for some index $i$,} 
\item{for each $h \in S, T \in \mathcal{T}_h$ there exists a one-to-one affine map $\tilde{f}_{h,T}: \mathbb{R}^n \to \mathbb{R}^{K_i}$ such that $\tilde{f}_{h,T}(\hat T) = \phi_i(T)$ (by abuse of notation, we'll also use $\tilde{f}_{h,T}$ to refer to the linear map $v \mapsto \tilde{f}_{h,T}(v) - \tilde{f}_{h,T}(0)$),}

\item{there exists $0 < K < \infty$ such that $\frac{\sup_{v \ne 0}\frac{\|\tilde{f}_{h,T}v\|}{\|v\|}}{\inf_{w \ne 0}\frac{\|\tilde{f}_{h,T}w\|}{\|w\|}} \le K$ for all $h \in S, T \in \mathcal{T}_h$,}
\end{enumerate}
 then $\{(M,\mathcal{T}_h,g_h)\}_{h \in S}$ has the Riemannian shape-regularity property.
\end{lemma}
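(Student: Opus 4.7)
The plan is to construct the required diffeomorphism by composing the affine map with the inverse embedding, and then bound the shape quantity $C_{g_h,\hat\delta}(T)\,C_{\hat\delta,g_h}(T)$ by factoring the comparison of metrics through two intermediate metrics. For each $T \in \mathcal{T}_h$, pick an index $i = i(T)$ with $T \subset V_i$ (guaranteed by hypothesis 2) and set
\[
\tilde f_{h,T} := \phi_i^{-1} \circ f_{h,T}\big|_{\hat T} : \hat T \to T,
\]
which is a smooth diffeomorphism because $\phi_i$ is a smooth embedding of $U_i$ and $f_{h,T}$ sends $\hat T$ bijectively onto $\phi_i(T) \subset \phi_i(U_i)$. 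If this map reverses orientation, precompose it with a reflection of $\hat T$; since reflections are $\delta$-isometries this leaves every subsequent quantity unchanged. Define $\hat\delta := \tilde f_{h,T}^{-*}\delta$ as in the definition of Riemannian shape-regularity.

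The central observation is that the quantity $\mu(g_1,g_2)(\Omega) := C_{g_1,g_2}(\Omega)\,C_{g_2,g_1}(\Omega)$ is submultiplicative under insertion of an intermediate metric,
\[
\mu(g_1,g_3)(\Omega) \;\le\; \mu(g_1,g_2)(\Omega)\,\mu(g_2,g_3)(\Omega),
\]
an immediate consequence of $\sup(a/c) \le \sup(a/b)\cdot\sup(b/c)$ applied in both orders. Inserting the chain $g_h \to g \to \phi_i^*\delta \to \hat\delta$ and harmlessly enlarging each intermediate domain,
\[
C_{g_h,\hat\delta}(T)\,C_{\hat\delta,g_h}(T) \;\le\; \mu(g_h,g)(M)\cdot \mu(g, \phi_i^*\delta)(V_i)\cdot \mu(\phi_i^*\delta,\hat\delta)(T),
\]
reducing the problem to a uniform-in-$(h,T)$ bound on these three factors.

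The first factor is at most $C^2$ by hypothesis 1. For the second, $g$ and $\phi_i^*\delta$ are both smooth Riemannian metrics on the compact set $V_i$, so the pointwise norm ratios are continuous and strictly positive on the compact unit sphere bundle over $V_i$, hence uniformly bounded above and below; taking the maximum over the finite collection $\{V_i\}_{i=1}^N$ gives a constant $M^\ast$ independent of $h$ and $T$. The third factor is where hypothesis 4 does the work: for $v \in T_xT$, set $w := d\phi_i(v)$. Because the differential of the affine map $f_{h,T}$ equals its constant linear part, one computes $\|v\|_{\hat\delta} = \|f_{h,T}^{-1}(w)\|_{\mathbb{R}^n}$ and $\|v\|_{\phi_i^*\delta} = \|w\|_{\mathbb{R}^{K_i}}$, where $f_{h,T}^{-1}$ denotes the inverse of the linear part restricted to its $n$-dimensional image $W := df_{h,T}(\mathbb{R}^n)$. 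Since $\phi_i(T) = f_{h,T}(\hat T)$ lies in the affine plane directed by $W$, the tangent space to $\phi_i(T)$ is exactly $W$, so the injective linear map $d\phi_i : T_xT \to W$ between two $n$-dimensional spaces is a bijection; as $v$ ranges over $T_xT\setminus\{0\}$, $w$ ranges over all of $W\setminus\{0\}$, giving
\[
\mu(\phi_i^*\delta, \hat\delta)(T) \;=\; \frac{\sup_{v\ne 0}\|f_{h,T}v\|/\|v\|}{\inf_{v\ne 0}\|f_{h,T}v\|/\|v\|} \;\le\; K
\]
by hypothesis 4. Multiplying the three bounds yields $C_{g_h,\hat\delta}(T)\,C_{\hat\delta,g_h}(T) \le C^2 M^\ast K$, uniformly in $h$ and $T$, which is precisely the Riemannian shape-regularity property.

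The only step needing genuine care is the third: correctly identifying the pullback metric $\hat\delta$ as being measured by the linear part of $f_{h,T}$ on the image plane $W$, and verifying that $d\phi_i$ actually surjects onto $W$ so that the supremum and infimum over $v\in T_xT$ capture the full singular-value spectrum of $f_{h,T}$ rather than a proper subset of it. The rest of the argument is a routine chaining of hypotheses and compactness.
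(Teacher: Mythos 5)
Your proof is correct and follows essentially the same route as the paper's: factor $C_{g_h,\hat\delta}(T)\,C_{\hat\delta,g_h}(T)$ through the intermediate metrics $g$ and $\phi_i^*\delta$ (the paper's $\delta_{U_i}$), bound the first factor by hypothesis 1, the second by compactness of the finitely many $V_i$, and identify the third with the singular-value ratio of the linear part of $f_{h,T}$ so that hypothesis 4 applies. Your explicit verification that $d\phi_i$ surjects onto the image plane $W$, and the reflection trick for orientation, are slightly more careful than the paper's treatment but do not change the argument.
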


Condition 4 is equivalent to the usual shape-regularity property $\frac{h_T}{\rho_T} \le K$ in the local Euclidean metric, which involves the Euclidean diameter $h_T$ of $T$ and the Euclidean diameter $\rho_T$ of the largest ball contained in $T$.

\begin{proof}[Proof of Lemma \ref{shapereg}]
Each $U_i$ has an induced flat metric $\delta_{U_i} := \phi_i^*(\sum_j dy^j\otimes dy^j)$, and each simplex $T \subseteq V_i$ has the induced flat metric $\hat\delta := (\tilde{f}_{h,T}^{-1} \circ \phi_i)^*(\sum_j dx^j \otimes dx^j)$. For a given $w \in \mathbb{R}^n$ and $x \in T$, we can define a corresponding vector $W = d(\phi_i^{-1} \circ \tilde{f}_{h,T})(w^j\pdiff{}{x^j}) \in T_x(T)$. Then $\|w\| = \|W\|_{\hat \delta}$ and $\|\tilde{f}_{h,T}w\| = \|W\|_{\delta_{U_i}}$. Since any vector $W \in T_x(T)$ can be expressed this way, condition 4 is equivalent to $C_{\delta_{U_i},\hat \delta}(T)C_{\hat \delta,\delta_{U_i}}(T) \le K$. 

Clearly from the definition of $C_{\hat \delta,g_h}(T)$, 
\[
C_{\hat \delta,g_h}(T) \le C_{g,g_h}(T)C_{ \delta_{U_i},g}(T)C_{\hat \delta,\delta_{U_i}}(T)
\]
and
\[
C_{g_h,\hat \delta}(T) \le C_{g_h,g}(T)C_{g, \delta_{U_i}}(T)C_{ \delta_{U_i},\hat \delta}(T).
\]
Since $V_i$ is closed and $M$ is compact, $V_i$ is compact, so $g$ and $ \delta_{U_i}$ are quasi-isometric on $V_i$. Let $C' := \max_{i} (C_{g, \delta_{U_i}}(V_i)C_{ \delta_{U_i},g}(V_i))$. Then, we have
\[
C_{\hat \delta,g_h}(T)C_{g_h,\hat \delta}(T) \le C C' K,
\]
which is exactly what we need for the family to be Riemannian shape-regular, using $f_{h,T} := \phi_i^{-1} \circ \tilde{f}_{h,T}$ for the maps $\hat T \to T$.
\end{proof}

\begin{figure}
    \centering
    \includegraphics[width=0.5\linewidth]{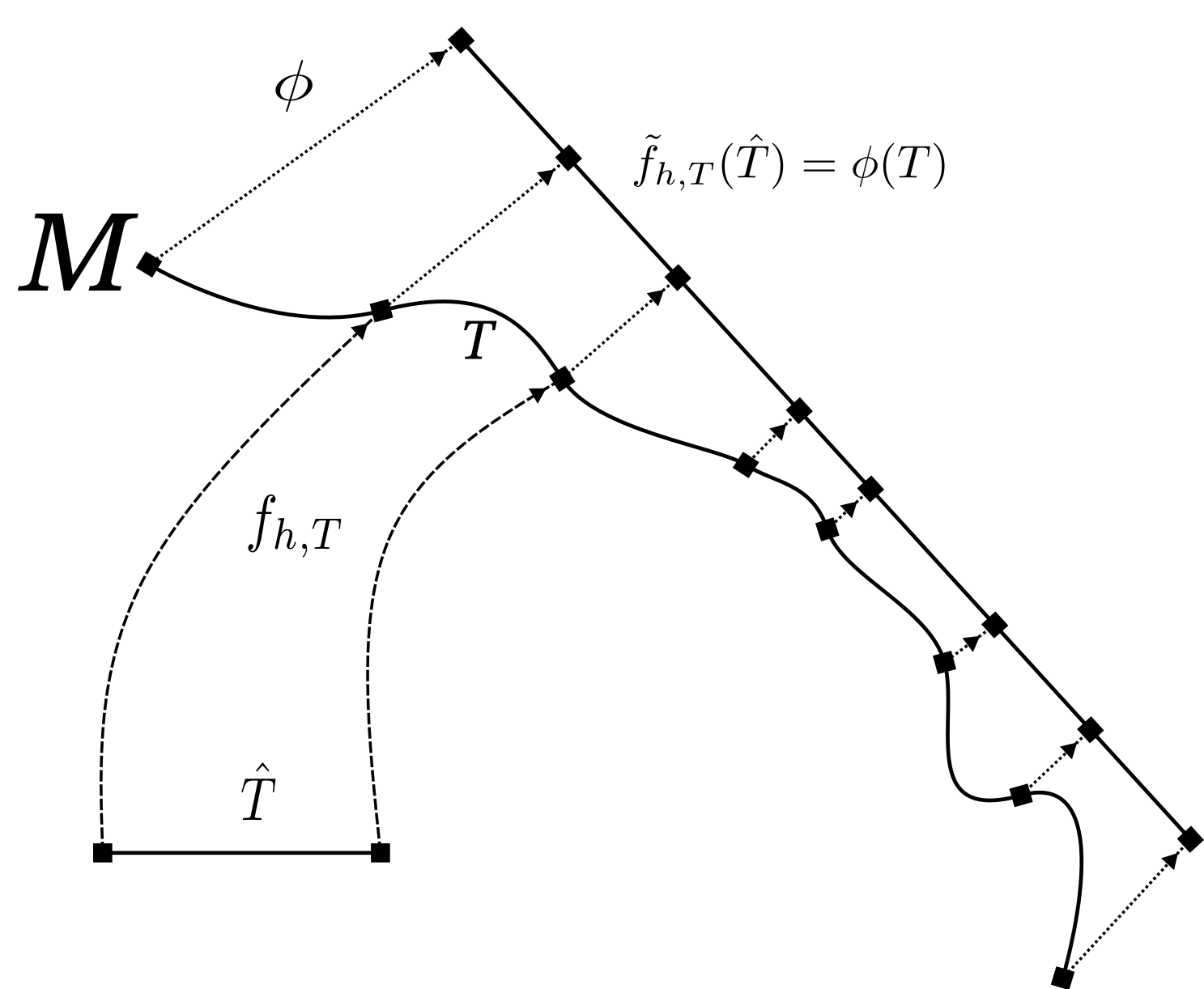}
    \caption{Example in which $M$ is a smooth curve in $\mathbb{R}^2$ and $\phi: M \to \mathbb{R}^2$ is a projection onto a straight line. The maps $f_{h,T}: [0,1] \to M$ can be taken as compositions of the affine maps $\tilde{f}_{h,T}: \hat{T} \to \phi(T)$ with the nonlinear map $\phi^{-1}$.}
    \label{fig:placeholder}
\end{figure}

We can also introduce a new definition of quasi-uniformity. In order for this definition to be meaningful in the same way it is in the Euclidean case, $h$ can no longer be an abstract parameter. Instead, we will reparameterize the family $\{(M_h,\mathcal{T}_h,g_h)\}_{h \in S}$ so that $\max_{T \subseteq M_h} C_{g_h,\hat{\delta}}(T) = h$. This is analogous to using $h$ to represent the maximum diameter of a simplex.

\begin{definition}\label{quasiuniformdef}
Let $\{(M_h,\mathcal{T}_h,g_h)\}_{h \in S}$ and the maps $f_{h,T}: \hat{T} \to M_h$ be as in Definition (\ref{shaperegdef}), such that $\max_{T \subseteq M_h} C_{g_h,\hat\delta}(T) = h$. If there exists a number $0 < K' < \infty$ such that $C_{\hat\delta,g_h}(T)\le K'h^{-1}$ for all $h \in S, T \in \mathcal{T}_h$ then we will say that the family of manifolds and Regge metrics has the \emph{Riemannian Quasi-Uniformity Property}.
\end{definition}

The quantity $C_{\hat\delta,g_h}(T)$ measures how much the map $f_{h,T}$ can shrink the length of a curve (measured in the $g_h$ metric) compared to its Euclidean length in the standard unit simplex. By bounding it from above, we assert that the simplices  do not ``shrink too fast'' relative to $h$. As before, the usual notion of quasi-uniformity in curvilinear coordinates implies Riemannian\correction{-}{} quasi-uniformity.

\begin{lemma}\label{quasiuniform}
With the assumptions of Lemma \ref{shapereg}, and the additional assumptions:

\begin{enumerate}
\setcounter{enumi}{4}
\item{$C_{g,g_h}(T) \le C$ for all $h \in S, T \in \mathcal{T}_h$},
\item{there exists $0 < K' < \infty$ such that $0 < \frac{1}{K'}h \le \inf_{v \ne 0} \frac{\|\tilde{f}_{h,T}v\|}{\|v\|}$ for all $h \in S, T \in \mathcal{T}_h$},
\end{enumerate}
then $\{(M,\mathcal{T}_h,g_h)\}_{h \in S}$ has the Riemannian quasi-uniformity property.
\end{lemma}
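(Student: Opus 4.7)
The plan is to bound $C_{\hat\delta,g_h}(T)$ by chaining through the intermediate metrics $\delta_{U_i}$ and $g$, exactly as was done in the proof of Lemma \ref{shapereg}, and then to isolate the one factor in which the mesh size $h$ appears.

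First I would use the elementary multiplicativity of $C$ (which follows immediately from its definition via pointwise ratios of norms): for any simplex $T \subseteq V_i$,
\[
C_{\hat\delta,g_h}(T) \;\le\; C_{\hat\delta,\delta_{U_i}}(T)\; C_{\delta_{U_i},g}(T)\; C_{g,g_h}(T).
\]
The third factor is bounded by $C$ by assumption (5). The second factor is bounded by $C_{\delta_{U_i},g}(V_i)$, which is finite because $V_i$ is compact and both $\delta_{U_i}$ and $g$ are smooth there; taking the max over the finitely many $i$ gives a uniform constant $C''$.

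The main (and only nontrivial) step is controlling $C_{\hat\delta,\delta_{U_i}}(T)$ in terms of $h$. Here I would reuse the pointwise identification from Lemma \ref{shapereg}'s proof: for any $W \in T_x T$ written as $W = d(\phi_i^{-1}\circ f_{h,T})(w^j\partial/\partial x^j)$ one has $\|W\|_{\hat\delta} = \|w\|$ and $\|W\|_{\delta_{U_i}} = \|f_{h,T}w\|$. Consequently
\[
C_{\hat\delta,\delta_{U_i}}(T) \;=\; \sup_{w\ne 0} \frac{\|w\|}{\|f_{h,T}w\|} \;=\; \Bigl(\inf_{w\ne 0}\tfrac{\|f_{h,T}w\|}{\|w\|}\Bigr)^{-1}.
\]
Assumption (6) says this infimum is at least $h/K'$, so $C_{\hat\delta,\delta_{U_i}}(T) \le K'/h$.

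Combining the three bounds yields
\[
C_{\hat\delta,g_h}(T) \;\le\; \frac{K' \, C'' \, C}{h},
\]
which is the definition of the Riemannian quasi-uniformity property with constant $K' C'' C$ in place of the letter $K'$ used in the statement. I do not anticipate any real obstacle here; the content of the lemma is essentially a bookkeeping exercise combining the already-established factorizations with the one $h$-dependent estimate coming from condition (6).
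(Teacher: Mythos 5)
Your proposal is correct and follows essentially the same route as the paper's proof: both chain $C_{\hat\delta,g_h}(T) \le C_{\hat\delta,\delta_{U_i}}(T)\,C_{\delta_{U_i},g}(T)\,C_{g,g_h}(T)$, identify $C_{\hat\delta,\delta_{U_i}}(T)$ with the reciprocal of $\inf_{v\ne 0}\|f_{h,T}v\|/\|v\|$, and apply assumptions (5) and (6) together with compactness of the $V_i$. No gaps.
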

Condition 6 is equivalent to the usual quasi-uniformity property $\frac{1}{K'}h \le \rho_T$.
\begin{proof}[Proof of lemma \ref{quasiuniform}]
We already know that $C_{\hat \delta,g_h}(T) \le C_{\hat \delta,\delta_{U_i}}(T)C_{\delta_{U_i},g}(T)C_{g,g_h}(T)$ and $C_{\hat \delta,\delta_{U_i}}(T) = \sup_{v \ne 0} \frac{\|v\|}{\|\tilde{f}_{h,T}v\|} = \frac{1}{\inf_{v \ne 0} \frac{\|\tilde{f}_{h,T}v\|}{\|v\|}} \le \frac{1}{\frac{1}{K'}h}$. Therefore $C_{\hat \delta,g_h}(T) \le \max_iC_{\delta_{U_i},g}(V_i)C K'h^{-1}$.
\end{proof}

\begin{remark}\label{weakercoords}
    The maps $\phi_i$ do not need to be smooth embeddings on their whole domain; they merely need to be smooth embeddings when restricted to each simplex. An option for generalization would be to instead use a family of maps $\{\phi_{i,h}\}_{h \in S}$ such that $\phi_{i,h}: U_i \to \mathbb{R}^{K_i}$ is a piecewise-smooth embedding for $\mathcal{T}_h$, and there exist smooth embeddings $\phi_i: U_i \to \mathbb{R}^{K_i}$ and a constant $C$ such that $\|d\phi_{i,h}^j -  d\phi_i^j\|_{L^\infty(V_i,g)} \le C$ for all $h \in S$ and each coordinate $j = 1, \dots, K_i$. If $C$ is small enough, $C_{\delta_{U_i,h},g}(V_i)$ and $C_{g,\delta_{U_i,h}}(V_i)$ are both uniformly bounded, which would suffice to show shape-regularity and quasi-uniformity using the maps $f_{h,T} = \phi_{i,h}^{-1} \circ \hat{f}_{h,T}$. This could be useful, for instance, if $M$ is a smooth hypersurface in $\mathbb{R}^{n+1}$, $g$ is the metric on $M$ inherited from $\mathbb{R}^{n+1}$, and a piecewise-flat triangulated surface is used to approximate the smooth surface. The mesh $\mathcal{T}_h$ could be obtained by projecting from the approximate surface to the true surface, and the maps $\phi_{i,h}$ could be defined as the inverse of this projection map, with codomain $\mathbb{R}^{n+1}$.  The nearby smooth maps $\phi_i$ could each be defined as the inclusion map $M \hookrightarrow \mathbb{R}^{n+1}$. It must be stressed that in this situation, the approximate metrics $g_h$ need not necessarily be obtained from $\phi_{i,h}$, but setting $g_h$ equal to the pullback under $\phi_{i,h}$ of the Euclidean metric on $\mathbb{R}^{n+1}$ would yield a first-order approximation of $g$.
\end{remark}

To emphasize the roles of the different quantities, we will define $h_T := C_{g_h,\hat \delta}(T)$ and $\rho_{h,T} := \frac{1}{C_{\hat \delta, g_h}(T)}$. The Riemannian shape-regularity property can be rephrased as $\frac{h_T}{\rho_{h,T}} \le K$ and the Riemannian quasi-uniformity property can be rephrased as ${\rho_{h,T}}^{-1} \le K'h^{-1}$. We will also distinguish the bound on volume deformation by setting $K_V := \max_{T \subseteq M} D_{g_h,\hat \delta}(T)D_{\hat \delta,g_h}(T)$, since volumes typically transform differently than lengths in the scaling arguments. Note also that $\rho_{h,T} \le h_T$ since $1 = C_{g_h,g_h}(T) \le C_{g_h,\hat \delta}(T)C_{\hat\delta,g_h}(T) \le K$.

The value of Riemannian shape-regularity even in the circumstances when Lemmas (\ref{shapereg}) and (\ref{quasiuniform}) apply is that the resulting estimates for $K$, $K_V$, and $K'$ are overestimates unless the coordinate maps happen to be isometric embeddings for the smooth metric $g$. This discrepancy is especially problematic if $g$ is highly curved.

\begin{corollary}[Trace Inequality for Shape-regular Simplicial Complexes]
\label{shaperegtrace}
If $\{(M_h,\mathcal{T}_h,g_h)\}_{h \in S}$ is Riemannian shape-regular, then there exists $\hat C$ independent of $h,T, \alpha$ such that, for all $T \in \mathcal{T}_h$ and $\alpha \in H^1\Omega^{p,k}(T,g_h)$,
\begin{align*}
\|i_{\partial T}^*\alpha\|_{L^2(\partial T,g_h)} \le \hat C \sqrt{K_V}K^{k+p}\Big[&{\rho_{h,T}}^{-\frac{1}{2}}\Big(1 + K^{k+p+1}\sqrt{K_V} \|\hat \nabla g_h\|_{L^\infty(T,\hat \delta)}\Big)\|\alpha\|_{L^2(T,g_h)} \\
&+ K^\frac{1}{2}h_T^\frac{1}{2}\|\nabla_h \alpha\|_{L^2(T,g_h)}\Big].
\end{align*}
\end{corollary}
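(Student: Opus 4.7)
The plan is to apply the $H^1$ trace inequality (Theorem 2) on the simplex $T$ with the metric $g_h$ in place of $g$, and then rewrite every factor on the right-hand side using the abbreviations $h_T := C_{g_h,\hat\delta}(T)$, $\rho_{h,T} := C_{\hat\delta,g_h}(T)^{-1}$, and the uniform volume bound $K_V$, collapsing the ratios via shape-regularity.

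First I would note that shape-regularity gives
\[
\frac{h_T}{\rho_{h,T}} = C_{g_h,\hat\delta}(T)\,C_{\hat\delta,g_h}(T) \le K,
\]
and that by the definition of $K_V$ we have $\sqrt{D_{g_h,\hat\delta}(T)D_{\hat\delta,g_h}(T)} \le \sqrt{K_V}$. The prefactor in Theorem 2 then satisfies
\[
C_{g_h,\hat\delta}(T)^{k+p}\,C_{\hat\delta,g_h}(T)^{k+p+\tfrac12}\sqrt{D_{g_h,\hat\delta}(T)D_{\hat\delta,g_h}(T)}
= h_T^{k+p}\,\rho_{h,T}^{-(k+p+\tfrac12)}\sqrt{\,\cdots\,}
\le K^{k+p}\,\rho_{h,T}^{-\tfrac12}\sqrt{K_V}.
\]
This produces the outer factor $\hat C\sqrt{K_V}\,K^{k+p}\rho_{h,T}^{-1/2}$ in the target bound once the $\rho_{h,T}^{-1/2}$ is distributed with the $\|\alpha\|$ term and its counterpart is absorbed into the $\|\nabla\alpha\|$ term (see below).

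Next I would handle the two interior terms separately. For the coefficient multiplying $\|\alpha\|_{L^2(T,g_h)}$, the extra factor in Theorem 2 is
\[
1 + C_{\hat\delta,g_h}(T)^{k+p+1}\,C_{g_h,\hat\delta}(T)^{k+p+1}\sqrt{D_{\hat\delta,g_h}(T)D_{g_h,\hat\delta}(T)}\,\|\hat\nabla g_h\|_{L^\infty(T,\hat\delta)}
= 1 + \bigl(h_T/\rho_{h,T}\bigr)^{k+p+1}\sqrt{K_V}\,\|\hat\nabla g_h\|_{L^\infty(T,\hat\delta)},
\]
which is bounded by $1 + K^{k+p+1}\sqrt{K_V}\,\|\hat\nabla g_h\|$, matching the desired form (treating the $L^2$/$L^\infty$ distinction in the statement at face value, or otherwise noting that only the $L^\infty$ bound from Theorem 2 is actually used). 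For the coefficient multiplying $\|\nabla_h\alpha\|_{L^2(T,g_h)}$, Theorem 2 contributes the extra factor $C_{g_h,\hat\delta}(T) = h_T$, so the total coefficient is
\[
h_T^{k+p}\,\rho_{h,T}^{-(k+p+\tfrac12)}\sqrt{K_V}\cdot h_T
= (h_T/\rho_{h,T})^{k+p+\tfrac12}\,h_T^{\tfrac12}\sqrt{K_V}
\le K^{k+p+\tfrac12}\,h_T^{\tfrac12}\sqrt{K_V},
\]
which is exactly $\hat C\sqrt{K_V}\,K^{k+p}\cdot K^{1/2}h_T^{1/2}$ after factoring out the common prefactor.

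There is no real obstacle here: the argument is a direct substitution followed by one application of shape-regularity to collapse every ratio $h_T/\rho_{h,T}$ to $K$. The only thing that requires care is keeping track of the exponents of $h_T$ and $\rho_{h,T}$ and how much of each factor goes into the outer prefactor versus each of the two interior summands, which is the bookkeeping I have carried out above.
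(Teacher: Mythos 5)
Your derivation is correct and is exactly the substitution the paper intends: the corollary is stated without an explicit proof precisely because it follows from Theorem \ref{H1tracethrm} by replacing $C_{g_h,\hat\delta}(T)$, $C_{\hat\delta,g_h}(T)^{-1}$, and the volume factors with $h_T$, $\rho_{h,T}$, $K$, and $K_V$, and your exponent bookkeeping for both interior terms checks out. Your side remark about the $L^2$ versus $L^\infty$ norm on $\|\hat\nabla g_h\|$ is also well taken --- the statement's $L^2(T,\hat\delta)$ is evidently a typo for the $L^\infty(T,\hat\delta)$ norm inherited from Theorem \ref{H1tracethrm}.
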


\begin{corollary}[Inverse Inequality for Shape-regular Simplicial Complexes] \label{shapereginverse}
If $\{(M_h,\mathcal{T}_h,g_h)\}_{h \in S}$ is Riemannian shape-regular, $\hat V \subset H(d)\Omega^k(\hat T, \delta)$ is finite-dimensional, $V_h := f_{h,T}^{-*}(\hat V)$, then there exists $\hat C$ depending only on $\hat{V}$, $n$, and $k$ such that for all $u_h \in V_h$,
\[
\|du_h\|_{L^2(T,g_h)} \le \hat C \sqrt{K_V}K^{k+1}{\rho_{h,T}}^{-1}\|u_h\|_{L^2(T,g_h)}.
\]

Likewise if $\hat V \subset H^1\Omega^{p,k}(\hat T,\delta)$ is finite-dimensional, $V_h := f_{h,T}^{-*}(\hat V)$,  then there exists $\hat C$ depending only on $\hat{V}$, $n$, $k$, and $p$ such that for all $u_h \in V_h$,
\[
\|\nabla_h u_h\|_{L^2(T,g_h)} \le \hat C\sqrt{K_V}K^{k+p}{\rho_{h,T}}^{-1}\left(1 + \|\hat \nabla g_h\|_{L^\infty(T,\hat \delta)}\right)\|u_h\|_{L^2(T,g_h)}.
\]
\end{corollary}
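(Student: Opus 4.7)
The plan is to derive both inequalities by direct substitution into the preceding Inverse Inequality for Riemannian Simplices, followed by an application of the shape-regularity hypothesis. No new analytic work is required; everything is a matter of translating between the abstract constants $C_{g_h,\hat\delta}(T)$, $C_{\hat\delta,g_h}(T)$, $D_{\hat\delta,g_h}(T)$, $D_{g_h,\hat\delta}(T)$ appearing in the previous theorem and the mesh parameters $h_T$, $\rho_{h,T}$, $K$, $K_V$ just defined.

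First I would apply the preceding theorem on each simplex $T \in \mathcal{T}_h$, using the diffeomorphism $f_{h,T}\colon \hat T \to T$ provided by the Riemannian shape-regularity assumption. This immediately yields the two bounds of that theorem, one for $\|du_h\|_{L^2(T,g_h)}$ and one for $\|\nabla_h u_h\|_{L^2(T,g_h)}$, with the $\hat C$ there depending only on $\hat V$, $n$, $k$ (and $p$ in the $H^1$ case), exactly as needed.

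Next I would rewrite each factor on the right-hand side in the shape-regular notation: $h_T = C_{g_h,\hat\delta}(T)$, $\rho_{h,T}^{-1} = C_{\hat\delta,g_h}(T)$, and $\sqrt{D_{\hat\delta,g_h}(T)\,D_{g_h,\hat\delta}(T)} \le \sqrt{K_V}$ by the maximum that defines $K_V$. At this stage the right-hand sides still contain products of the form $h_T^{k} \rho_{h,T}^{-(k+1)}$ in the $H(d)$ case and $h_T^{k+p} \rho_{h,T}^{-(k+p+1)}$ in the $H^1$ case.

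The final step is to invoke the shape-regularity inequality $h_T \le K \rho_{h,T}$ (equivalent to $C_{g_h,\hat\delta}(T) C_{\hat\delta,g_h}(T) \le K$) to convert the positive powers of $h_T$ into powers of $\rho_{h,T}$, collapsing each product to a single factor of $\rho_{h,T}^{-1}$ times an appropriate power of $K$. Since $K \ge 1$ (which follows from $1 = C_{g_h,g_h}(T) \le C_{g_h,\hat\delta}(T) C_{\hat\delta,g_h}(T) \le K$), any discrepancy in the $K$-exponent relative to the stated $K^{k+1}$ or $K^{k+p}$ can be absorbed freely. There is no real obstacle here: each step is either the substitution of a definition or a one-line application of the shape-regularity bound, and the dependence of $\hat C$ on only $\hat V$, $n$, $k$ (and $p$) is inherited directly from the corresponding dependence in the preceding theorem.
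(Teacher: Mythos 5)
Your proposal is correct and follows exactly the route the paper intends: the corollary is stated without proof precisely because it is the preceding theorem applied with $g = g_h$, followed by the substitutions $h_T = C_{g_h,\hat\delta}(T)$, $\rho_{h,T}^{-1} = C_{\hat\delta,g_h}(T)$, $\sqrt{D_{\hat\delta,g_h}(T)D_{g_h,\hat\delta}(T)} \le \sqrt{K_V}$, and the shape-regularity bound $h_T \le K\rho_{h,T}$ to collapse the mixed powers (your observation that this actually yields $K^{k}$ in the $H(d)$ case, with the stated $K^{k+1}$ absorbing the slack via $K \ge 1$, is accurate).
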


\section{Error Analysis of the Hodge-Laplace Problem on Manifolds with Regge Metrics}
In this section we will work out finite element a priori error analysis for the Hodge-Laplace problem on manifolds with Regge metrics, building on the foundation of geometric variational crimes \cite{Holst-Stern}. This section is mostly a straightforward application of existing theory. \correction{}{The main new component is Theorem \ref{Jhbounds}.}

Briefly, we start with a compact smooth Riemannian manifold with corners $(M,g)$ and the $L^2$-de Rham complex of differential forms, on which the Hodge-Laplace problem can be posed in the strong form (for a source term $f \in L^2\Omega^k(M,g)$):
\begin{equation}\label{poisson_strong}
(d\delta + \delta d)u = f \quad \text{on}\; \mathring{M}, \end{equation}
\begin{equation}\label{poisson_strong_boundary}
i_{\partial M}^*\star u = i_{\partial M}^*\star du = 0.
\end{equation}

The space of harmonic forms $\mathfrak{H}^k(M,g) := \{\alpha \in H(d)\Omega^k(M,g) \cap H(\delta)\Omega^k_\mathbf{n}(M,g) : d\alpha = \delta \alpha = 0\}$ can be shown to be equal to the closure of the set $\{u \in C^\infty\Omega^k(M) : (d\delta + \delta d)u = 0, i_{\partial M}^*\star u = i_{\partial M}^* \star du = 0\}$ in $H^1\Omega^k(M,g)$ \cite{Schwarz}. This is significant as the solution is only unique up to addition by a harmonic form, and the fact that the space of harmonic forms is compactly embedded in $L^2$ is a key component in proving the well-posedness of weak formulations of the problem.

There is a mixed weak formulation of this problem, which is well-posed \cite{Arnold,Arnold-Winther}: Find $(\sigma,u,p) \in H(d)\Omega^{k-1}(M,g) \times H(d)\Omega^k(M,g) \times \mathfrak{H}^k(M,g)$ such that, for all $(\tau,v,q) \in H(d)\Omega^{k-1}(M,g) \times H(d)\Omega^k(M,g) \times \mathfrak{H}^k(M,g)$,
\begin{align}\label{contproblem}
\begin{split}\iprod{\sigma,\tau}_{L^2(M,g)} - \iprod{d\tau,u}_{L^2(M,g)} &= 0\\
\iprod{d\sigma,\tau}_{L^2(M,g)} + \iprod{du,dv}_{L^2(M,g)} + \iprod{p,v}_{L^2(M,g)} &= \iprod{f,v}_{L^2(M,g)}\\
\iprod{u,q}_{L^2(M,g)} &= 0.
\end{split}\end{align}

\begin{remark}\label{dualnorm}Note that $f$ does not necessarily need to be in $L^2\Omega^k(M,g)$, as the mixed weak formulation is well-posed for any element of $H(d)\Omega^k(M,g)^*$. From now on we will use $F$ to refer to such a functional. While this change is not necessarily meaningful for the strong formulation of the problem, it can still be useful in geometric applications, as we will see in the next section. \end{remark}

To approximate this boundary value problem, we would like to use a finite-dimensional subcomplex $V_h^k \subset H(d)\Omega^k(M,g_h)$ where $g_h$ is a Regge metric on a triangulation $\mathcal{T}_h$ which approximates $g$, such that $d(V_h^k) \subset V_h^{k+1}$. We will use $V_h^k$ to refer to this space with the $H(d)\Omega^k(M,g_h)$ norm, and $W_h^k$ to refer to the same space with the $L^2\Omega^k(M,g_h)$ norm. The space of discrete harmonic forms is given by $\mathfrak{H}^k_h := \{u \in V_h^k : du = 0, \iprod{u,d\eta}_{L^2(M,g_h)} = 0 \forall \eta \in V_h^{k-1} \}$. The discrete weak problem is then: given $F_h \in {V_h^k}^*$, find $(\sigma_h,u_h,p_h) \in V_h^{k-1} \times V_h^k \times \mathfrak{H}^k_h$ such that, for all $(\tau_h,v_h,q_h) \in V^{k-1}_h \times V^k_h \times \mathfrak{H}^k_h$,
\begin{align}\label{discproblem}
\begin{split}\iprod{\sigma_h,\tau_h}_{L^2(M,g_h)} - \iprod{d\tau_h,u_h}_{L^2(M,g_h)} &= 0,\\
\iprod{d\sigma_h,\tau_h}_{L^2(M,g_h)} + \iprod{du_h,dv_h}_{L^2(M,g_h)} + \iprod{p_h,v_h}_{L^2(M,g_h)} &= F_h(v_h),\\
\iprod{u_h,q_h}_{L^2(M,g_h)} &= 0.
\end{split}\end{align}

To analyze the stability and accuracy of this method in the framework of geometric variational crimes, we need to produce bounded cochain maps $i_h^k: W_h^k \to L^2\Omega^k(M,g)$ and $\pi_h^k: H(d)\Omega^k(M,g) \to V_h^k$ such that $i_h^k(W_h^k) \subset H(d)\Omega^k(M,g)$ and $\pi_h^k \circ i_h^k  = \mathrm{Id}$. Since the $L^2$ and $H(d)$ spaces of forms on a compact manifold with corners do not depend (as topological spaces) on the metric, $i_h^k$ can simply be the inclusion map $W_h^k \subset L^2\Omega^k(M,g)$, and $\pi_h^k$ can be a family of bounded cochain projection operators $H(d)\Omega^k(M,g) \to i_h^k (V_h^k)$. This will induce bounded cochain maps $W_h^k \to L^2\Omega^k(M,g)$ and $H(d)\Omega^k(M,g) \to V_h^k$ with the desired properties so long as $C_{g_h,g}(M)$ and $C_{g,g_h}(M)$ are uniformly bounded with respect to $h$. 

Specifically, 
\[\|i_h^k\|_{\mathcal{L}(W_h^k,L^2\Omega^k(M,g))} = \sup_{v_h \in W_h\backslash \{0\}} \frac{\|v_h\|_{L^2\Omega^k(M,g)}}{\|v_h\|_{L^2\Omega^k(M,g_h)}} \le \sqrt{n \choose k}C_{g_h,g}(M)^k\sqrt{D_{g,g_h}(M)}\]
and 
\begin{align*}\|\pi_h^k\|_{\mathcal{L}(H(d)\Omega^k(M,g),V_h^k)} &= \sup_{v \in H(d)\Omega^k(M,g)\backslash \ker \pi_h^k} \frac{\|\pi_h^k(v)\|_{H(d)(M,g_h)}}{\|\pi_h^k(v)\|_{H(d)(M,g)}}\frac{\|\pi_h^k(v)\|_{H(d)(M,g)}}{\|v\|_{H(d)(M,g)}}\\
	&\le \sup_{v_h \in V_h^k\backslash \{0\}}\frac{\|v_h\|_{H(d)(M,g_h)}}{\|v_h\|_{H(d)(M,g)}}\sup_{v \in H(d)\Omega^k(M,g)\backslash\{0\}} \frac{\|\pi_h^k(v)\|_{H(d)(M,g)}}{\|v\|_{H(d)(M,g)}}\\
	\le \sqrt{n \choose k}\max(&C_{g,g_h}(M)^k,C_{g,g_h}(M)^{k+1})\sqrt{D_{g_h,g}(M)}\|\pi^k_h\|_{\mathcal{L}(H(d)\Omega^k(M,g),H(d)\Omega^k(M,g))}.\end{align*}

We will take the existence of a family of commuting projection operators $\pi_h^k: H(d)\Omega^k(M,g) \to i_h^k(V_h^k)$ as given. Such operators have been constructed, mostly for Euclidean domains \cite{Christiansen-Winther} and somewhat recently for Riemannian manifolds \cite{Licht}. Their construction is not trivial. However, it is worth noting that the canonical degrees of freedom for the $\mathcal{P}_r\Lambda^k$ and $\mathcal{P}_r^-\Lambda^k$ spaces from finite element exterior calculus~\cite{Arnold-Winther} are completely independent of the metric when applied to smooth forms, as they only depend on integrals of smooth differential $j$-forms over $j$-simplices (for $j \ge k$) and a local homotopy operator $\kappa$. The difficulty is in constructing a smoothing operator $H(d)\Omega^k(M,g) \to C^\infty\Omega^k(M)$ that is bounded in the $H(d)$ norm, commutes with the exterior derivative, and preserves boundary conditions.

Theorem 3.9 in \cite{Holst-Stern} establishes that the discrete problem is well-posed with a uniform inf-sup constant, while Corollary 3.11 in \cite{Holst-Stern} establishes that the following error bound holds: 
\begin{align}\label{errbounds}
\begin{split}
&\|\sigma - \sigma_h\|_{H(d)(M,g)} + \|u - u_h\|_{H(d)(M,g)} + \|p - p_h\|_{L^2(M,g)}\\
&\le C\Bigg[\inf_{\tau \in V_h^{k - 1}} \| \sigma - \tau\|_{H(d)(M,g)} + \inf_{v \in V_h^k} \|u - v\|_{H(d)(M,g)} + \inf_{q \in V_h^k} \|p - q\|_{H(d)(M,g)}\\
&\quad\quad+\inf_{v \in V_h^k}\sup_{r \in \mathfrak H^k(M,g)\backslash \{0\}} \frac{\|(Id - \pi_h^k)r\|_{L^2(M,g)}}{\|r\|_{L^2(M,g)}}\|P_{\mathfrak{B}}u - v\|_{H(d)(M,g)}\\
&\quad\quad+\|F_h - F\|_{{V_h^k}^*} + \left(\|Id - J_h\|_{\mathcal{L}(W_h^{k-1},W_h^{k-1})} + \|Id - J_h\|_{\mathcal{L}(W_h^k,W_h^k)}\right)\|F\|_{H(d)\Omega^k(M,g)^*}\Bigg],
\end{split}\end{align}
where $J_h := {i_h^k}^* \circ i_h^k$, ${i_h^k}^*: L^2\Omega^k(M,g) \to W_h^k$ is the $L^2$ adjoint of $i_h^k$, and $P_\mathfrak{B}: L^2\Omega^k(M,g) \to L^2\Omega^k(M,g)$ is the orthogonal projection onto the closed subspace $\mathfrak{B} = \{ dv : v \in H(d)\Omega^{k-1}(M,g)\}$. The proof of Theorem 3.10 in \cite{Holst-Stern} can be carried out with minimal modifications using $F \in H(d)\Omega^k(M,g)^*$, $F_h \in H(d)\Omega^k(M,g_h)^*$, and dual norms thereof in place of $\iprod{i_h^*f,\cdot}_h$, $\iprod{f_h,\cdot}_h$, and $L^2$-norms thereof, respectively; this is because, as noted in Remark \ref{dualnorm}, the weak forms are all well-posed for this more general class of functionals, and the dual norm can be used in place of the Cauchy-Schwarz inequality when obtaining the bound $\iprod{f_h - i_h^* f, v_h}_h \le \|f_h - i_h^*f\|_h\|v_h\|_{V_h}$. Again, this is not necessarily meaningful for the original strong formulation (\ref{poisson_strong}), but it can still be useful in practice. 

Our main contribution that is not covered by existing literature will be putting bounds on $\|Id - J_h\|$ that depend only on intrinsic quantities associated to a Regge metric.
\begin{theorem}
\label{Jhbounds}
There exists $C$ depending only on $n$ and $k$ such that, if $\|g - g_h\|_{L^\infty(M,g_h)} < \frac{1}{C}$, then
\begin{equation}\|Id - J_h\|_{\mathcal{L}(W_h^k,W_h^k)} \le {n \choose k} C\left[2 + C\|g_h - g\|_{L^\infty(M,g_h)}\right]\|g_h - g\|_{L^\infty(M,g_h)},\end{equation}
where $\|g - g_h\|_{L^\infty(M,g_h)} = \esssup_{x \in M}\sup_{V,W \in T_x(M) \backslash \{0\}} \frac{|(g - g_h)(V,W)|}{\|V\|_{g_h}\|W\|_{g_h}}$.

\end{theorem}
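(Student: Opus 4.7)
The plan is to reduce $\|Id - J_h\|$ to a supremum over bilinear forms, then to a pointwise estimate on a single $\binom{n}{k}\times\binom{n}{k}$ matrix, and finally to entry-wise bounds on that matrix. First I would use the defining property of $J_h$: since $i_h^k$ is the set-theoretic inclusion $W_h^k \hookrightarrow L^2\Omega^k(M,g)$, the adjoint relation gives $\iprod{J_h u,v}_{L^2(M,g_h)} = \iprod{u,v}_{L^2(M,g)}$ for all $u,v \in W_h^k$, so $Id - J_h$ is self-adjoint on $W_h^k$ and
\[
\|Id - J_h\|_{\mathcal{L}(W_h^k,W_h^k)} = \sup_{u,v \in W_h^k \setminus \{0\}} \frac{|\iprod{u,v}_{L^2(M,g_h)} - \iprod{u,v}_{L^2(M,g)}|}{\|u\|_{L^2(M,g_h)}\|v\|_{L^2(M,g_h)}}.
\]

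Next I would work pointwise. At each $x \in M$ where both metrics are smooth, choose a $g_h$-orthonormal coframe $\{\tilde\theta^i\}$ with dual frame $\{\tilde E_i\}$ and set $G_{ij} := g(\tilde E_i, \tilde E_j)$, so that $\|G - I\|_2 \le \|g - g_h\|_{L^\infty(M,g_h)} =: \epsilon$. Writing $u = u_\mathcal{I}\tilde\theta^\mathcal{I}$ and $v = v_\mathcal{J}\tilde\theta^\mathcal{J}$, and using $dV_g = \sqrt{\det G}\,dV_{g_h}$ together with $\iprod{\tilde\theta^\mathcal{I},\tilde\theta^\mathcal{J}}_g = \det[(G^{-1})_{i_a j_b}]_{a,b=1}^k =: (\Lambda^k G^{-1})_{\mathcal{I}\mathcal{J}}$, I obtain
\[
\iprod{u,v}_{g_h}\,dV_{g_h} - \iprod{u,v}_g\,dV_g = \sum_{\mathcal{I},\mathcal{J}} u_\mathcal{I}\,N(G)_{\mathcal{I}\mathcal{J}}\,v_\mathcal{J}\,dV_{g_h}, \quad N(G) := I - \sqrt{\det G}\,\Lambda^k G^{-1}.
\]
The coefficient-wise bound $|\sum_{\mathcal{I},\mathcal{J}} u_\mathcal{I} N(G)_{\mathcal{I}\mathcal{J}} v_\mathcal{J}| \le \max_{\mathcal{I},\mathcal{J}}|N(G)_{\mathcal{I}\mathcal{J}}|\,\|u\|_1\|v\|_1$, together with $\|u\|_1 \le \sqrt{\binom{n}{k}}\|u\|_{g_h}$ (and similarly for $v$) and Cauchy-Schwarz in $L^2(M,g_h)$, reduces the theorem to the pointwise estimate $\max_{\mathcal{I},\mathcal{J}} |N(G)_{\mathcal{I}\mathcal{J}}| \le C\epsilon(2 + C\epsilon)$ for some $C = C(n,k)$.

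For that estimate I would split $\sqrt{\det G} = 1 + \eta$ and $(\Lambda^k G^{-1})_{\mathcal{I}\mathcal{J}} = \delta_{\mathcal{I}\mathcal{J}} + \rho_{\mathcal{I}\mathcal{J}}$ so that $|N(G)_{\mathcal{I}\mathcal{J}}| \le |\eta| + |\rho_{\mathcal{I}\mathcal{J}}| + |\eta|\,|\rho_{\mathcal{I}\mathcal{J}}|$. Provided $\epsilon$ is small enough that $G$ remains positive definite, its eigenvalues lie in $[1-\epsilon, 1+\epsilon]$, so $\det G \in [(1-\epsilon)^n,(1+\epsilon)^n]$ and a mean-value estimate gives $|\eta| \le C_1\epsilon$ with $C_1 = C_1(n)$. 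For $\rho_{\mathcal{I}\mathcal{J}}$ I would write $G^{-1} = I + K$ with $\|K\|_2 \le \epsilon/(1-\epsilon)$ and expand $\det[\delta_{i_a j_b} + K_{i_a j_b}]$ by Leibniz. The ``all-$\delta$'' part equals $\det[\delta_{i_a j_b}]$, which is $1$ if $\mathcal{I} = \mathcal{J}$ and $0$ if $\mathcal{I} \ne \mathcal{J}$ (because two distinct strictly increasing $k$-tuples have distinct underlying sets, so the indicator matrix has a zero row), while every remaining term in the Leibniz sum contains at least one factor of $K$. This yields $|\rho_{\mathcal{I}\mathcal{J}}| \le C_2\epsilon$ with $C_2 = C_2(n,k)$, and setting $C = \max(C_1, C_2)$ completes the argument.

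The main obstacle is the combinatorial bookkeeping in the off-diagonal case for $\Lambda^k G^{-1} - I$: the crucial observation is that two distinct strictly increasing multi-indices cannot be matched by any single permutation, so the Leibniz expansion of the corresponding $k \times k$ determinant really has no constant-in-$K$ term and is therefore genuinely $O(\epsilon)$. Handling the smallness hypothesis $\epsilon < 1/C$ carefully to ensure that $G$ remains invertible and the Neumann series $G^{-1} = I - (G - I) + (G - I)^2 - \cdots$ converges is a routine but necessary ingredient in making the constants explicit.
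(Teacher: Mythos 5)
Your proof is correct, and it reaches the stated bound by a genuinely different route than the paper. The paper first identifies $J_h = \star_h^{-1}\star$ from the defining adjoint relation, expresses this operator in terms of two compound change-of-basis matrices $\Theta$, $\tilde\Theta$ built from the Cholesky factor of the Gram matrix relating a $g$-orthonormal and a $g_h$-orthonormal coframe, and then bounds $\|\Theta^{-1}-I\|_m$ and $\|\tilde\Theta-I\|_m$ by appealing to first-order Taylor expansions of the determinant, matrix-inverse, and Cholesky maps, so its constants remain implicit. You bypass the Hodge star and the sign bookkeeping entirely by working with the variational characterization $\iprod{J_hu,v}_{L^2(M,g_h)}=\iprod{u,v}_{L^2(M,g)}$ and the identity $\|Id-J_h\|=\sup_{u,v}|\iprod{u,v}_{L^2(M,g_h)}-\iprod{u,v}_{L^2(M,g)}|/(\|u\|_{L^2(M,g_h)}\|v\|_{L^2(M,g_h)})$; the resulting pointwise matrix $I-\sqrt{\det G}\,\Lambda^kG^{-1}$ is of course the same endomorphism the paper computes (in a $g_h$-orthonormal coframe it \emph{is} $Id - \star_h^{-1}\star$), but your two $O(\epsilon)$ factors come from an eigenvalue estimate on $\det G$ and a Leibniz expansion of the compound matrix of $I+K$, which makes the constants explicitly computable and isolates cleanly why the off-diagonal entries are genuinely $O(\epsilon)$ (two distinct increasing multi-indices give a zero row in $[\delta_{i_aj_b}]$). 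Both arguments produce the $[2+C\epsilon]\epsilon$ structure from the product of two $(1+O(\epsilon))$ factors, and both incur the same $\binom{n}{k}$ from an entrywise-to-spectral (respectively $\ell^1$-to-$\ell^2$) comparison. The only points worth making explicit in a polished write-up are that the pointwise computation is performed a.e.\ on the interiors of top-dimensional simplices of a common refinement, where both metrics are smooth and a measurable $g_h$-orthonormal coframe exists, and that the smallness threshold $\epsilon<1/C$ must be chosen so that $G$ stays positive definite and $\|G^{-1}-I\|_2\le\epsilon/(1-\epsilon)$ is controlled; you flag both, so there is no gap.
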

\begin{remark}
Note that the $h$-dependent norms on the right-hand side of the above inequality can be replaced by $h$-independent norms using the fact that $\|g - g_h\|_{L^\infty(M,g_h)} \le C_{g_h,g}(M)^2\|g - g_h\|_{L^\infty(M,g)}$.
\end{remark}
\begin{proof}[Proof of Theorem \ref{Jhbounds}]

$J_h$ has an explicit formula in terms of the Hodge star operators for $g$ and $g_h$. First, we note that, from the definition of the $L^2(M,g)$ and $L^2(M,g_h)$ inner products, we have that
\[\sum_{T \subseteq M} \int_T u \wedge \star_h (\star_h^{-1} \star v) = \int_M u \wedge \star v = \iprod{u,v}_{L^2(M,g)} = \iprod{u,J_hv}_{L^2(M,g_h)} = \sum_{T \subseteq M} \int_T u \wedge \star_h J_h v.\]

So, the two maps $v \mapsto \iprod{u,\star_h^{-1}\star v}_{L^2(M,g_h)}$ and $v \mapsto \iprod{u,J_hv}_{L^2(M,g_h)}$ are identical for any $u \in W_h^k$. This is only possible if $J_h = \star_h^{-1}\star$.

Firstly we will define some quantities related to any pair of orthonormal coframes. Let $\{\theta^i\}_{i = 1}^n$ be a $g$-orthonormal coframe  and $\{\theta^i_h\}_{i = 1}^n$ be a $g_h$-orthonormal coframe, which induce the same orientation on $T_x^*(T)$. This gives us bases $\{\theta^I\}_{|I| = k}$ and $\{\theta^J_h\}_{|J| = k}$ for $\Lambda^k_x(M)$. They are related by the change of basis matrix (with each multi-index assigned an integer index) $\Theta^I_J := \iprod{\theta^I,\theta^J_h}_h$. For each $I$ there is also a unique multi-index called $[n]\backslash I$ and a quantity $\sigma(I)$ such that $\star \theta^I = (-1)^{\sigma(I)}\theta^{[n]\backslash I}$ and $\star_h \theta^I_h = (-1)^{\sigma(I)} \theta^{[n]\backslash I}_h$. We will use $\tilde\Theta$ to refer to the change of basis matrix for $\Lambda^{n-k}_x(M)$, i.e. $\tilde\Theta^K_L := \iprod{\theta^K,\theta_h^L}_h$ for all multi-indices $K$ and $L$, $|K| = |L| = n-k$. The enumeration of multi-indices can be selected so that the integer index corresponding to the multi-index $[n]\backslash I$ in $\tilde \Theta$ is the same as that for $I$ in $\Theta$. Since the dimension of $\Lambda^{n-k}_x(M)$ is the same as that of $\Lambda^k_x(M)$, the matrices $\Theta$ and $\tilde \Theta$ have the same shape.

Thus we can write:
\begin{align*}J_h|_x \theta^I_h &= \star_h^{-1}\star\sum_K(\Theta^{-1})^I_K\theta^K = \star_h^{-1}\sum_K(-1)^{\sigma(K)}(\Theta^{-1})^I_K\theta^{[n]\backslash K} \\
&= \sum_{K,J}(-1)^{\sigma(K) + \sigma(J)}(\Theta^{-1})^I_K\tilde\Theta^{[n]\backslash K}_{[n]\backslash J} \theta^J_h.\end{align*}

So, expressing the endomorphism $Id - J_h|_x$ as a matrix in the basis $\{\theta^I_h\}$, we get
\begin{equation}\label{Jheqn}(Id - J_h|_x)^I_J = \delta^I_J - (-1)^{\sigma(J)}\sum_K(-1)^{\sigma(K)}(\Theta^{-1})^I_K\tilde\Theta^{[n]\backslash K}_{[n]\backslash J}.\end{equation}

Now we pick a specific $\{\theta_h\}$ depending on $\{\theta\}$. Let the matrix $G$ be defined by $(G^{-1})^i_j = \iprod{\theta^i,\theta^j}_h$. Then let $E$ be the Cholesky factor of $G$, so $\theta^i_h = \sum_j E^i_j\theta^j$ gives a $g_h$-orthonormal coframing. Then $\Theta^I_J = \iprod{(E^{-1})^{i_1}_l\theta_h^l\wedge\dots\wedge (E^{-1})^{i_k}_m\theta_h^m,\theta^J_h}_h = \det([(E^{-1})^{i_a}_{j_b}])$. Likewise, $\tilde\Theta^K_L = \det([E^{k_a}_{l_b}])$.

Since the Cholesky factorization, inverse, and determinant are all smooth functions, by Taylor series expansion there exists a constant $C$ such that if $\|G - \delta\|_m < \frac{1}{C}$, then $\|\Theta^{-1} - \delta\|_m < C\|G - \delta\|_m$ and $\|\tilde\Theta - \delta\|_m \le C\|G - \delta\|_m$.
 
This constant $C$ depends only on the derivatives of the determinant and Cholesky square root maps at the matrices $[\delta^{i_a}_{j_b}]$ and $[\delta^{([n]\backslash I)_a}_{([n]\backslash J)_b}]$ for each multi-index $I$ and $J$, and the derivatives of the inverse map at the ${n \choose k}\times{n \choose k}$ and $k\times k$ identity matrices. Therefore it depends only on $n$ and $k$.
 
 Lastly, let $\tilde{\delta}^I_J := (-1)^{\sigma(J)}\delta^I_J$. Putting all this together with (\ref{Jheqn}) we can conclude 
\begin{align*}|(Id - J_h|_x)^I_J| &= \Big|\delta^I_J - (-1)^{\sigma(J)}\sum_K(\Theta^{-1}\tilde{\delta})^I_K\tilde\Theta^{[n]\backslash K}_{[n]\backslash J}\Big|\\
&= \big|\delta^I_J - (-1)^{\sigma(J)}[\tilde{\delta} + \tilde{\delta}(\tilde\Theta - \delta) + (\Theta^{-1} - \delta)\tilde{\delta} + (\Theta^{-1} - \delta)\tilde{\delta}(\tilde\Theta  - \delta)]^I_J\big|\\
&= \big|(-1)^{\sigma(J)}[\tilde{\delta}(\tilde\Theta - \delta) + (\Theta^{-1}-\delta)\tilde{\delta} + (\Theta^{-1} - \delta)\tilde{\delta}(\tilde\Theta - \delta)]^I_J\big|\\
&\le \esssup_{x \in M}C\left[2 + C\|G - \delta\|_m\right]\|G - \delta\|_m\\
&\le C\left[2 + C\|g - g_h\|_{L^\infty(M,g_h)}\right]\|g - g_h\|_{L^\infty(M,g_h)}.\end{align*}

Since $Id - J_h$ extends to a bundle endomorphism $\Lambda^k(T) \to \Lambda^k(T)$ on each simplex $T \subseteq M$, it is clearly the case that
\[\|Id - J_h\|_{\mathcal{L}(W_h^k,W_h^k)} \le \esssup_{x \in M} \|Id - J_h|_x\|_2 \le {n \choose k}\esssup_{x \in M} \max_{I,J}|(Id - J_h|_x)^I_J|.\]

This gives the desired bound on $\|Id - J_h\|_{\mathcal{L}(W_h^k,W_h^k)}$.
\end{proof}

Showing that a complex of finite element spaces $\{V_h^k\}_{k=0}^n$ and/or the approximated metric $g_h$ actually have the approximation properties that lead to a convergent discretization would be a complicated matter, but we will sidestep it by assuming we are in a situation of Lemma \ref{shapereg}: there exists a finite closed cover $\{W_i\}$ of $M$ such that each \correction{triangle}{simplex} $T \subseteq M$ lies entirely within $W_i$ for some $i$, and each $W_i$ itself lies within a coordinate chart $U_i$, and the maps $f_{h,T}$ defining the \correction{triangles}{simplices} $T \subseteq M$ are affine when expressed in these coordinates. In practice, this is a convenient way to describe a computational mesh on a manifold (see remark \ref{weakercoords} for an indication of how less-smooth coordinate maps could be used as well). For each $T \subseteq M$, we pick a specific $i(T)$ so that $T \subset W_{i(T)}$, and set $\|\cdot\|_E^2 := \sum_{T \subseteq M} \|\cdot\|_{L^2(T,\delta_{U_{i(T)}})}^2$. 

Approximation properties of finite element spaces are usually proved for flat simplices in Euclidean space, which would correspond to using the norm $\|\cdot\|_E$. By Theorem \ref{L2_equiv}, \\$\|\cdot\|_{L^2(M,g)} \le \sqrt{n \choose k}\max_iC_{\delta_{U_i},g}(W_i)^k\sqrt{D_{g,\delta_{U_i}}(W_i)}\|\cdot\|_E$. This constant does not depend on $g_h$, $\mathcal{T}_h$, or $V_h$, so approximation properties of $V_h$ proved in the context of local coordinates are sufficient.

Likewise, $\|g - g_h\|_{L^\infty(M,g)} \le \max_iC_{\delta_{U_i},g}(W_i)^2\|g - g_h\|_{L^\infty(W_i,\delta_{U_i})}$, so approximation properties of the metric $g_h$ may be proved using local coordinates as well.

\section{Calculation of Connection Forms as a Hodge-Laplace Problem}

As an application of the geometric methods developed in the previous sections of this paper to solve a geometric problem, we will approximate the connection 1-form $\alpha$ associated to a special frame field $(e_1,e_2)$ on a simply connected 2-dimensional manifold \correction{}{with corners equipped} with a smooth Riemannian metric, called $(M,g)$. Specifically, we wish to approximate $\alpha = A^1_2 := \iprod{\nabla e_2, e_1}$. We want the frame field to have the property that $\delta \alpha = 0$. In fact, we require an even stronger property, that $\alpha = \delta \omega$ for some 2-form $\omega$.

First, we note that there actually does exist such a frame. Let $(e_1,e_2)$ be \emph{any} smooth orthonormal frame on $(M,g)$ with corresponding connection form $B^1_2 = \beta = df + \delta \omega$, where the splitting is unique by the Hodge decomposition, and let $\Phi: M \to SO(2)$ be a map which rotates the frame $(e_1,e_2)$ smoothly on $M$ by the angle $\theta: M \to \mathbb{R}$. Then the frame $\Phi \cdot (e_1,e_2)$ has the connection form 
\[{A_\Phi}^1_2 = (\Phi^{-1}d\Phi)^1_2 + \mathrm{Ad}(\Phi)(A)^1_2,\] 
where $\mathrm{Ad}$ denotes the adjoint representation of $SO(2)$  on $\mathfrak{so}(2)$, defined by $\mathrm{Ad}(H)(w) := H w H^{-1}$.

Since $M$ is 2-dimensional, $\mathrm{Ad}$ is the trivial action. Additionally, the $\mathfrak{so}(2)$-valued 1-form $\Phi^{-1}d\Phi$ is, in coordinates, 
\[\begin{bmatrix}0 & -d\theta \\ d\theta & 0\end{bmatrix}.\]

Thus, if $\Phi$ is a rotation by $\theta = -f \mod 2\pi$, then ${A_\Phi}^1_2 = \delta \omega$, and $\delta {A_\Phi}^1_2 = \delta \delta \omega = 0$. The resulting frame $(\tilde{e_1},\tilde{e_2})$ is also unique up to a constant rotation.

The condition we have imposed on $A^1_2$ is reminiscent of the Coulomb gauge from electrodynamics, where $A$ would be the magnetic vector potential. While the physical significance of $A^1_2$ and the frame $(e_1,e_2)$ are not obvious, the idea is basically the same as gauge fixing.

To set up a weak form of the equation for $\alpha$, or more specifically $*\alpha$, we let $\omega = *u$, so that $\alpha = *du$. We then apply the fact that $d\alpha = KdA$, where $dA$ is the volume form and $K$ is the Gauss curvature. Thus, by applying integration by parts, we arrive at the weak form of our problem: find $u \in H(d)\Omega^0(M,g)$ such that, for all $v \in H(d)\Omega^0(M,g)$ and $c \in \ker d = \mathfrak{H}^0(M,g)$,
\begin{equation}\label{weakform}\iprod{du,dv}_{L_2(M,g)}  = \int_{\partial M} v\alpha - \int_M vKdA,\end{equation}
\begin{equation}\label{weakformbc}\iprod{u,c}_{L^2(M,g)} = 0. \end{equation}

This is in fact a specialization of the abstract Hodge-Laplace problem for the $L^2(M,g)$-de Rham complex. Note that the right-hand side of (\ref{weakform}) is zero when evaluated against any constant function $v \in \ker d$,  meaning this equation is well-posed as written.

We will describe a discretization of the above problem. Let $\mathcal{T}_h$ be a Riemannian shape-regular and quasi-uniform family of triangulations of $M$ (in the sense of Section 2.3) such that $h_T \le h$ for all $T \in \mathcal{T}_h$, and let $g_h$ be a Regge metric which is smooth on the interior of each simplex of $\mathcal{T}_h$ and approximates $g$. Using a finite-dimensional subspace $V_h \subset H(d)\Omega^0(M,g_h)$ such that for each $T \subseteq M$, $f_{h,T}^*(v_h) = \hat v_h$ for some $\hat v_h \in \hat V \subset C^2\Omega^0(\hat T)$ for a fixed subspace $\hat V$, the discrete problem is: find $u_h \in V_h$ such that, for all $v_h \in V_h$ and $c \in V_h \cap \ker d$,
\begin{equation}
\label{discreteform}\iprod{du_h,dv_h}_{L^2(M,g_h)} = \iprod{\iprod{{\alpha|_{\partial M}}_\mathrm{dist}(g_h),v_h}} -\iprod{\iprod{KdA_{\mathrm{dist}}(g_h),v_h}} 
\end{equation}
and
\begin{equation}\label{discreteformbc}\iprod{u_h,c}_{L^2(M,g_h)} = 0.\end{equation}
The functional $KdA_{\mathrm{dist}}$ above is the distributional Gaussian curvature, defined in \cite{BKGa22,GaNe22} as
\[
\iprod{\iprod{KdA_\mathrm{dist}(g_h),v_h}} := \sum_{T \subseteq M} \int_T v_hK_h dA_h + \sum_{\mathring{e} \subset \mathring{M}} \int_e v_h[\![k_hds_h]\!] + \sum_{p \in \mathring{M}} v_h(p)\Bigg(2\pi - \sum_{T \ni p}\theta_T(p)\Bigg).
\] 

Above, $k_h$ is the geodesic curvature of the edge $e$ as measured by $g_h$ and $\theta_T(p)$ is the interior angle of the triangle $T$ at the point $p \in \partial T$ as measured by $g_h$. The functional ${\alpha|_{\partial M}}_{\mathrm{dist}}$ is defined as
\[
\iprod{\iprod{{\alpha|_{\partial M}}_\mathrm{dist}(g_h),v_h}} := \sum_{e \subset \partial M} \int_e v_h(d\mu - k_hds_h) - \sum_{p \in \partial M} v_h(p)\Bigg(\pi - [\![\mu]\!]|_p - \sum_{T \ni p} \theta_T(p)\Bigg),
\]
where $\mu$ is the angle that the smooth frame $e_1$ makes with the outward normal vector of $\partial M$ as measured by the smooth metric $g$.

 The above definition of ${\alpha|_{\partial M}}_{\mathrm{dist}}$ is perhaps not very obvious. There are two ways to make sense of it; one involves proving a generalization of the Gauss-Bonnet theorem to manifolds with Regge metrics, as we have done in a preprint \cite{gawlik2025curvaturereggemetrics}, and the other amounts to observing that the method converges with this definition. The computed form $\alpha_h = \star_h du_h$ may be interpreted as a form which approximates, in each triangle, the connection form associated to a special $g_h$-orthogonal frame field $(e_1,e_2)_h$ which makes the same angle (measured with $g_h$) with the outward normal vector as $(e_1,e_2)$ does (measured with $g$) and such that its associated connection one-form is co-exact.
 
 To take the second route, there is a generalization of Theorem 3.6 of \cite{GaNe22} in the case $n = 2$:

\begin{theorem} 
\label{diffFh}
Let $v$ be a continuous function such that $v|_T \in C^2(T)$ for all $T \subseteq M$, let and $\{g(t) : t \in [0,1]\}$ be a family of Regge metrics such that $t \mapsto g(t)|_T$ is smooth for each triangle $T$. Then,
\[
\pdiff{}{t} [\iprod{\iprod{{\alpha|_{\partial M}}_\mathrm{dist}(g(t)),v}} - \iprod{\iprod{KdA_\mathrm{dist}(g(t)),v}}] = -\frac{1}{2}b_h(g(t);\dot g(t),v).
\]

$b_h$ is defined as
\begin{equation}\label{bheqn}
b_h(g;\sigma,v) := \sum_{T \subseteq M} \iprod{\mathbb{S}\sigma,\nabla\nabla v}_{L^2(T,g)} - \sum_{e \subset M} \iprod{(\mathbb{S}\sigma)(n,n),dv([\![n]\!])}_{L^2(e,g)},
\end{equation}
where $\mathbb{S}\sigma = \sigma - \mathrm{tr}(\sigma)g$ and $n$ denotes the unit normal to $e$ with respect to $g$.
\end{theorem}
Note that the definition of $b_h$ only depends on the mesh $\mathcal{T}_h$ and the metric $g$. Note also that we abuse notation by using the letter $n$ for the normal vector rather than the dimension (which in this context is fixed at $2$).

\begin{proof}[Proof of Theorem \ref{diffFh}]
Since in our case $g = g(t)$, we will use a subscript $t$ to emphasize the dependence on the $g(t)$ metric.  Therefore
\begin{align*}
\pdiff{}{t} \iprod{\iprod{{\alpha|_{\partial M}}_\mathrm{dist}(g(t)),v}} &= \pdiff{}{t}\left[-\sum_{e \subset \partial M} \int_e v k_tds_t - \sum_{p \in \partial M}v(p)\Bigg(\pi - \sum_{T \ni p} \theta_T(p)\Bigg)\right] \\
&= - \sum_{e \subset \partial M} \int_e v\pdiff{}{t} k_tds_t + \sum_{p \in \partial M} v(p)\sum_{T \ni p} \pdiff{}{t}\theta_T(p) \\
&= - \frac{1}{2}\Bigg(\sum_{e \subset \partial M}\int_ev[d(\dot{g}(n_t,\tau_t))(\tau_t) + (\mathrm{div}(\mathbb{S}\dot g))(n_t)]ds_t \\
&\quad\quad\quad- \sum_{p \in \partial M}\sum_{T \ni p} v(p)[\![\dot{g}(n_t,\tau_t)]\!]|_p \Bigg).
\end{align*}

The identities used here are established in \cite{GaNe22}, Section 2. $\tau_t$ is a tangent unit vector to $e$.

In the second to last line of the proof of Lemma 3.3 of \cite{GaNe22}, the following identity is established (simplified here for the $n = 2$ case):
\begin{align*}
b_h(g(t);\dot g, v) = 2\pdiff{}{t}\iprod{\iprod{(KdA)_\mathrm{dist}(g(t)),v}}  &+ \sum_{p \in \partial M} \sum_{T \ni p}v(p)[\![\dot{g}(n_t,\tau_t)_T]\!]|_p\\
&- \sum_{e \subset \partial M} \int_e v[d(\dot{g}(n_t,\tau_t))(\tau_t) + (\mathrm{div}(\mathbb{S}\dot{g}))(n_t)]ds_t.
\end{align*}
Comparing with the value of $\pdiff{}{t}\iprod{\iprod{{\alpha|_{\partial M}}_\mathrm{dist}(g(t)),v}}$, we get the desired result. 
\end{proof}
Define the functional $F_h(g(t)) \in V_h^*$ by $F_h(g(t))(v) := \iprod{\iprod{{\alpha|_{\partial M}}_\mathrm{dist}(g(t)),v}} - \iprod{\iprod{KdA_{\mathrm{dist}}(g(t)),v}}$. In the framework of geometric variational crimes, $F_h(g)$ corresponds to the functional $F$ in  (\ref{errbounds}) while $F_h(g_h)$ corresponds to the functional $F_h$ on the right of the discrete problem (\ref{discproblem}) and in (\ref{errbounds}). Therefore a key component of the error analysis will be bounds on $\|F_h(g) - F_h(g_h)\|_{V_h^*}$.

\section{Error Analysis of the Connection Form Problem}
The purpose of this section is to establish asymptotic error bounds on \\$\|\star_h du_h - \star du\|_{L^2(M,g)}$ in terms of the metric error. We have already established in Theorem \ref{diffFh} that $\pdiff{}{t}F_h(g(t);v_h) = -\frac{1}{2}b_h(g(t);\dot g(t), v_h)$. In particular we can set $g(t) = tg_h + (1-t)g$, so that \\$\|F_h(g;\cdot) - F_h(g_h;\cdot)\|_{V_h^*} \le \sup_{v_h \in V_h}\frac{\int_0^1 \frac{1}{2}|b_h(g(t);g_h - g,v_h)| dt}{\|v_h\|_{V_h}} \le \sup_{t \in [0,1]} \frac{1}{2}\|b_h(g(t);g_h - g,\cdot)\|_{V_h^*}$.

\begin{theorem}\label{Fhbounds}
Let $(M,g)$ be a compact disc with Riemannian metric $g$, and let $\{(M,\mathcal{T}_h,g_h)\}_{h \in S}$ be a Riemannian shape-regular and quasi-uniform set of meshes and Regge metrics, and let $V_h \subset H(d)\Omega^0(M,g_h)$ be a space such that for each $v_h \in V_h$ and each triangle $T \subseteq M$, $v_h|_T = f_{h,T}^{-*}(\hat v)$ for some $\hat v \in \hat V \subset C^2\Omega^0(\hat T)$, where $\hat V$ is a fixed finite-dimensional subspace and $\hat T$ is the standard simplex. Assume additionally that there exists an integer $r \ge 0$ such that
\begin{enumerate}\item{$\|g_h - g\|_{L^2(M,g)} = O(h^{r+1})$,}
\item{$\left(\sum_{T \subseteq M} \|\nabla g_h\|_{L^2(T,g)}^2\right)^\frac{1}{2} = O(h^r)$,}
\item{$\|g_h - g\|_{L^\infty(M,g)} \le C' < 1$ for all $h \in S$,}
\item{$\max_{T \subseteq M}\|\hat \nabla g_h\|_{L^\infty(T,\hat \delta)} + \|\hat \nabla g\|_{L^\infty(T,\hat \delta)} = O(1)$, where $\hat \delta$ is the flat metric induced by the map $f_{h,T}: \hat T \to T$ and $\hat{\nabla}$ is its associated Levi-Civita connection.} 
\end{enumerate}
Then, treating values that depend only on $(M,g)$, $C'$, and the mesh quality bounds $K$, $K_V$, and $K'$ appearing in the definitions of shape-regularity and quasi-uniformity (see Section \ref{shapereg_section}) as constants, we have
\[
\|b_h(g(t);g - g_h,\cdot)\|_{V_h^*} \in O(h^r),
\]
where $b_h$ is as in equation (\ref{bheqn}).
\end{theorem}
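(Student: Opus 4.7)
The plan is to split $b_h(g(t); g - g_h, v_h)$ into its volume and interface parts, bound each by Cauchy--Schwarz, and apply the trace and inverse inequalities of Corollaries \ref{shaperegtrace} and \ref{shapereginverse} with the decay rates coming from hypotheses (1) and (2). Throughout, hypothesis (3) ensures that $g$, $g_h$, and the convex combination $g(t) = tg_h + (1-t)g$ are pairwise quasi-isometric with constants uniform in $t \in [0,1]$ and $h \in S$, so the various $L^2$-norms are interchangeable up to constants. Similarly, Lemma \ref{compare_cov} together with hypothesis (4) yields a uniform bound on the operator norm of $\nabla_{g(t)} - \nabla_g$.

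For the volume part, setting $\sigma := g - g_h$ and bounding triangle-by-triangle,
\[
|\iprod{\mathbb{S}\sigma, \nabla\nabla v_h}_{L^2(T,g(t))}| \le \|\mathbb{S}\sigma\|_{L^2(T,g(t))}\,\|\nabla dv_h\|_{L^2(T,g(t))}.
\]
After summing, the first factor is $O(h^{r+1})$ by hypothesis (1) and norm equivalence (using $|\mathbb{S}\sigma| \lesssim |\sigma|$). For the second factor, $dv_h$ lies in the finite-dimensional space obtained by pulling $d\hat V$ back from $\hat T$, so Corollary \ref{shapereginverse} applied to $dv_h$ on each simplex, combined with quasi-uniformity $\rho_{h,T}^{-1} \le K'h^{-1}$, gives $\|\nabla dv_h\|_{L^2(M,g(t))} \lesssim h^{-1}\|dv_h\|_{L^2(M,g(t))} \le h^{-1}\|v_h\|_{V_h}$. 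The product is $O(h^r)\|v_h\|_{V_h}$.

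For the interface part, Cauchy--Schwarz over edges yields
\[
\Big(\sum_e \|\mathbb{S}\sigma\|^2_{L^2(e,g(t))}\Big)^{1/2} \Big(\sum_e \|dv_h([[n]])\|^2_{L^2(e,g(t))}\Big)^{1/2}.
\]
The trace inequality (Corollary \ref{shaperegtrace}) bounds the first factor by $h^{-1/2}\|\mathbb{S}\sigma\|_{L^2(M,g(t))}$ plus $h^{1/2}\big(\sum_T \|\nabla\mathbb{S}\sigma\|^2_{L^2(T,g(t))}\big)^{1/2}$. Using $\nabla_{g(t)} g(t) = 0$, a short calculation gives $\|\nabla_{g(t)}\mathbb{S}\sigma\|_{g(t)} \lesssim \|\nabla_{g(t)}\sigma\|_{g(t)}$, and then the decomposition $\nabla_{g(t)}\sigma = (\nabla_{g(t)} - \nabla_g)\sigma - \nabla_g g_h$ (using $\nabla_g g = 0$) combined with Lemma \ref{compare_cov} reduces the estimate to hypotheses (1) and (2), giving $O(h^{r+1/2})$ for the whole first factor. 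For the second factor, the Regge tangential--tangential continuity of $g$ and $g_h$ (inherited by $g(t)$) means only the normal--normal and tangent--normal components of $g(t)$ jump across $e$; a local calculation adapted to $e$ then yields $|[[n]]| \lesssim \|g - g_h\|_{L^\infty} \le C'$. Combining with trace and inverse inequalities on $dv_h$ produces $O(h^{-1/2})\|v_h\|_{V_h}$. The product is again $O(h^r)\|v_h\|_{V_h}$, completing the estimate.

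The main obstacle is the coordinate-free control of the jump $[[n]]$: we must show intrinsically that the $g(t)$-unit normal on either side of an interface $e$ differs by at most a constant multiple of the jump in the metric's normal-direction components, and in turn by $\|g - g_h\|_{L^\infty}$. A secondary concern is keeping every constant (quasi-isometry, connection comparison, norm equivalence) uniform in $t \in [0,1]$; this should follow from hypothesis (3) sandwiching $g(t)$ between $g$ and $g_h$ and from hypothesis (4) bounding $\|\hat \nabla g\|$ and $\|\hat\nabla g_h\|$ uniformly.
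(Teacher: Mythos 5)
Your overall skeleton — split $b_h$ into volume and interface parts, apply Cauchy--Schwarz, then the scaled trace and inverse inequalities, and count powers of $h$ using hypotheses (1) and (2) — is the same as the paper's, and your power counting ($O(h^{r+1})\cdot O(h^{-1})$ for the volume part, $O(h^{r+1/2})\cdot O(h^{-1/2})$ for the interface part) is correct. The volume-term estimate is essentially identical to the paper's. The gap is in the interface terms, and it is genuine: the intrinsic trace inequality (Theorem \ref{H1tracethrm} and Corollary \ref{shaperegtrace}) controls only the \emph{pullback} $i_{\partial T}^*\alpha$, i.e.\ the purely tangential components of a tensor on the edge. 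Your plan requires edge control of $\mathbb{S}\sigma(n,n)$ (a normal--normal component) and of $dv_h([[n]])$ (a normal derivative of $v_h$), neither of which is reached by the trace theorem as stated. The paper resolves the first via the two-dimensional identity $\mathbb{S}_t\sigma(n_t,n_t) = -\sigma(\tau_t,\tau_t)$, which converts the normal--normal component of $\mathbb{S}\sigma$ into the tangential--tangential component of $\sigma$, i.e.\ exactly the pullback $i_e^*\sigma$, to which the trace inequality applies. Your proposal omits this identity, and without it the first edge factor is not controlled by the stated machinery.

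For the second edge factor, you identify the jump $[[n]]$ as your main obstacle, but the paper never needs $[[n]]$ to be small: it simply uses $\|dv_h([[n_t]])\| \le \|dv_h(n_\alpha)\| + \|dv_h(n_\beta)\|$ (the jump is the sum of the two outward unit normals), then expands each $n_\alpha$ in the affine coordinate frame $(\hat{\partial}_x,\hat{\partial}_y)$ induced by $f_{h,T_\alpha}$, with coefficients bounded by $C_{\hat\delta,g(t)}(T_\alpha)$. The point of that expansion is that $dv_h(\hat{\partial}_x)$ and $dv_h(\hat{\partial}_y)$ are \emph{scalar} functions lying in a fixed enriched finite-dimensional space $\hat V'$ pulled back from $\hat T$, so the scalar trace inequality and the inverse inequality apply to them, yielding the $O(h^{-1/2})\|dv_h\|_{L^2(T_\alpha,g_h)}$ bound. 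Your alternative — estimating $|[[n]]|\lesssim\|g-g_h\|_{L^\infty}$ and then ``taking the trace of $dv_h$ on $e$'' — still requires the full (not pulled-back) covector norm of $dv_h$ on $e$, so it does not close the argument even if the jump estimate were supplied. Both issues are repairable by importing the paper's two devices, but as written the interface estimate does not follow from the cited corollaries.
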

\begin{proof}
We'll bound each term of $b_h$ separately. For clarity, we will elide values which are bounded uniformly by a constant that does not depend on $h$.

A few things need to be established to simplify the bounds. We'll show that $C_{g,g(t)}(M)$ and $C_{g(t),g}(M)$ both converge to 1 as fast as $\|g - g_h\|_{L^\infty(M,g)}$ converges to zero. 

Since for any two Regge metrics $g_1, g_2$ which are both smooth in a neighborhood of $x \in M$,
\begin{align*}
    \sup_{V \in T_xM \backslash \{0\}} \frac{\|V\|^2_{g_1}}{\|V\|^2_{g_2}} &\le \sup_{V,W \in T_xM \backslash \{0\}} \frac{\iprod{V,W}_{g_1}}{\|V\|_{g_2}\|W\|_{g_2}} \\ &= \sup_{V,W \in T_xM \backslash \{0\}} \frac{\iprod{V,W}_{g_1} - \iprod{V,W}_{g_2}}{\|V\|_{g_2}\|W\|_{g_2}} + \frac{\iprod{V,W}_{g_2}}{\|V\|_{g_2}\|W\|_{g_2}}\\
&\le \|g_1 - g_2\|_{L^\infty(M,g_2)} + 1,
\end{align*}
we have $C_{g(t),g}(M)^2 \le \|g(t) - g\|_{L^\infty(M,g)} + 1$ and $C_{g,g(t)}(M)^2 \le \|g(t) - g\|_{L^\infty(M,g(t))} + 1$. Clearly, $\|g(t) - g\| \le \|g_h - g\|$ in any metric. To make use of the second inequality, we need to do some work: $C_{g,g(t)}(M)^2 \le \|g_h - g\|_{L^\infty(M,g(t))} + 1\le C_{g,g(t)}(M)^2\|g_h - g\|_{L^\infty(M,g)} + 1$, so we obtain
$C_{g,g(t)}(M)^2 = 1 + O(\|g_h - g\|_{L^\infty(M,g)})$. So, any instance of $C_{g(t),g}(T)$, $C_{g(t),g}(M)$, $C_{g,g(t)}(T)$, or $C_{g,g(t)}(M)$ will be suppressed. An identical argument allows us to suppress terms like $C_{g(t),g_h}$ and $C_{g_h,g(t)}$, as well as terms involving $D$ (by part 1 of Lemma \ref{L2_equiv}).

Another technicality that needs to be addressed: if $\{(M,\mathcal{T}_h,g_h)\}_{h \in S}$ is Riemannian shape-regular and quasi-uniform, then so is $\{(M,\mathcal{T}_h,tg + (1-t)g_h)\}_{h \in S}$ for any $t \in [0,1]$, with constants $K(t)$, $K_V(t)$, and $K'(t)$ bounded by a multiple of those for $g_h$. This is because $C_{\hat \delta,g(t)}(T) \le C_{\hat \delta, g_h}(T)C_{g_h,g(t)}(T)$ and $C_{g(t),\hat \delta}(T) \le C_{g(t),g_h}(T)C_{g_h,\hat \delta}(T)$.

Now let us begin bounding the terms of $b_h$. In what follows, we use the letter $C$ to denote a constant that is independent of $h$ and may change at each occurrence.  We have
\begin{align*}|\iprod{\mathbb{S}_t\sigma,\nabla_t\nabla_t & v_h}_{L^2(T,g(t))}| \le \|\mathbb{S}_t\sigma\|_{L^2(T,g(t))}\|\nabla_t\nabla_t v_h\|_{L^2(T,g(t))}\\
&\le C \|\sigma\|_{L^2(T,g(t))}h^{-1}\left(1 + \|\hat \nabla g(t)\|_{L^\infty(T,\hat \delta)}\right)\|dv_h\|_{L^2(T,g(t))} \\
&\le C\|g - g_h\|_{L^2(T,g)}h^{-1}\left(1 + \|\hat \nabla g_h\|_{L^\infty(T,\hat \delta)} + \|\hat \nabla g\|_{L^\infty(T,\hat \delta)}\right)\|dv_h\|_{L^2(T,g_h)}.
\end{align*}
In the above derivation: the first line is an application of Cauchy-Schwarz, the second is applying the inverse inequality of corollary \ref{shapereginverse}, and the last line is changing from $g(t)$ to $g$ or $g_h$ norms. Recall that $\mathbb{S}_t\sigma(V,W) = \sigma(V,W) - (\mathrm{Tr}_t\sigma)\iprod{V,W}_t$, and $|\mathrm{Tr}_t\sigma| \le 2 \sup_{V,W \in T_x(T)} \frac{\sigma(V,W)}{\|V\|_t\|W\|_t}$, so $\|\mathbb{S}_t\sigma\|_{L^2(T,g(t))} \le 3\|\sigma\|_{L^2(T,g(t))}$.

Next are the edge terms. Let $n_\alpha$ and $n_\beta$ be the two outward-facing normal vectors of $e$ for the two triangles $T_\alpha \supset e$ and $T_\beta \supset e$ in $g(t)|_{T_\alpha}$ and $g(t)|_{T_\beta}$ respectively. If $e \subset \partial M$, then we will say that $n_\beta = 0$ and the terms involving the non-existent $T_\beta$ can be ignored.  First we unravel the definitions:
\begin{align}
|\iprod{&(\mathbb{S}_t\sigma)(n_t,n_t),dv_h([\![n_t]\!]}_{L^2(e,g(t))}| = |\iprod{\sigma(\tau_t,\tau_t),dv_h([\![n_t]\!])}_{L^2(e,g(t))}|\notag \\
&\le \|\sigma\|_{L^2(e,g(t))}\left(\|dv_h(n_\alpha)\|_{L^2(e,g(t))} + \|dv_h(n_\beta)\|_{L^2(e,g(t))}\right) \notag \\
&\le \|\sigma\|_{L^2(e,g)}\left(\|dv_h\|_{L^2(e,g_h)} + \|dv_h\|_{L^2(e,g_h)}\right). \label{edgeeqn}\end{align}

Here we applied the Cauchy-Schwarz inequality and then converted from $g(t)$ norms to $g$ and $g_h$ norms. By a standard application of the scaled trace and inverse inequalities (\ref{shaperegtrace}) and (\ref{shapereginverse}) to each coefficient of $dv_h$ in the coordinates $(\hat{x},\hat{y})$ induced by the diffeomorphism $\hat{T} \to T$, we get that for each edge $e$ and each $T_\alpha \supset e$,

\[\|dv_h\|_{L^2(e,g_h)} \le C h^{-\frac{1}{2}}\|dv_h\|_{L^2(T_\alpha,g_h)}.\]

Plugging this into (\ref{edgeeqn}), we get
\begin{align*}
(\ref{edgeeqn}) &\le C\|\sigma\|_{L^2(e,g)}h^{-\frac{1}{2}}\left(\|dv_h\|_{L^2(T_\alpha,g_h)} + \|dv_h\|_{L^2(T_\beta,g_h)}\right) \\
&\le Ch^{-\frac{1}{2}}\left(h^{-\frac{1}{2}}(1 + \|\hat \nabla g\|_{L^\infty(T_\alpha,\hat \delta)})\|\sigma\|_{L^2(T_\alpha,g)} + h^\frac{1}{2}\|\nabla \sigma\|_{L^2(T_\alpha,g)}\right)\\
&\quad\quad\quad\quad\left(\|dv_h\|_{L^2(T_\alpha,g_h)} + \|dv_h\|_{L^2(T_\beta,g_h)}\right)\\
&= C\left(h^{-1}\left(1 + \|\hat \nabla g\|_{L^\infty(T,\hat \delta)}\right)\|g - g_h\|_{L^2(T_\alpha,g_h)} + \|\nabla g_h\|_{L^2(T_\alpha,g_h)}\right)\\
&\quad\quad\quad\quad\left(\|dv_h\|_{L^2(T_\alpha,g_h)} + \|dv_h\|_{L^2(T_\beta,g_h)}\right).
\end{align*}

Here, we applied the trace inequality of Corollary \ref{shaperegtrace} to the $\sigma$ terms, then rearranged the $h$ coefficients into a convenient form, then expanded the definition of $\sigma$. Note that $\nabla g = 0$.

From these two bounds on the different terms appearing in $b_h$, it is clear that 
\begin{align*}
|b_h(g(t);g-g_h,v_h)| &\le C\|dv_h\|_{L^2(M,g_h)}\Bigg[\Bigg(\sum_{T \subset M}\|\nabla g_h\|_{L^2(T,g)}^2\Bigg)^\frac{1}{2}\\
&\quad\quad + h^{-1}\|g - g_h\|_{L^2(M,g)}\left(2 + \max_{T \subseteq M} \|\hat \nabla g_h\|_{L^\infty(T,\hat \delta)} + \|\hat \nabla g\|_{L^\infty(T,\hat \delta)}\right) \Bigg]\\
&\le C \|dv_h\|_{L^2(M,g_h)} h^r,
\end{align*}
where $C$ depends only on $(M,g)$, the constant $C' < 1$ which bounds $\|g - g_h\|_{L^\infty(M,g)}$, and the mesh quality measures $K$, $K_V$, and $K'$.
\end{proof}

Since $F_h(g;v_h) - F_h(g_h;v_h) = \frac{1}{2}\int_0^1 b_h(tg + (1 - t)g_h;g-g_h,v_h) dt$, this immediately implies that $\|F_h(g;\cdot) - F_h(g_h;\cdot)\|_{V_h^*} = O(h^r)$.

With this, we can conclude that if we have a finite element space $V_h \subset H(d)\Omega^0(M,g_h)$ which satisfies the conditions of Theorem \ref{Fhbounds}, and which is contained in a sequence of spaces admitting bounded cochain projections $\pi_h^k$, and which contains all constant functions and satisfies $\inf_{v \in V_h}\|u - v\|_{H(d)(M,g)} = O(h^r)$, and we have an interpolant $g_h$ satisfying the hypotheses of Theorem \ref{Fhbounds} and $\|g - g_h\|_{L^\infty(M,g)} = O(h^r)$, then $\|du_h - du\|_{L^2(M,g)} = O(h^r)$ and hence $\|\star_h du_h - \star du\|_{L^2(M,g)} = O(h^r)$.

In the case that $M$ has the topology of a 2-dimensional disc, it is practically sufficient to consider a shape-regular, quasi-uniform mesh $\mathcal{T}_h$ on a closed convex domain $\Omega \subset \mathbb{R}^2$ and construct a finite element space $V_h$ using this mesh, such as Lagrange elements of order $r$. Additionally, setting $\bar \delta$ to the Euclidean metric induced by this coordinate choice, we have $\hat \nabla = \bar \nabla$ since $f_{h,T}$ is affine when expressed in this coordinate system, so $\|\hat \nabla g\|_{L^\infty(T,\hat \delta)} \le C_{\bar \delta,\hat \delta}(T)^3 \|\bar \nabla g\|_{L^\infty(T,\bar \delta)} \le Ch^3\|\bar \nabla g\|_{L^\infty(T,\bar \delta)}$ if mesh elements have Euclidean diameter $O(h)$, and likewise for $g_h$. Order-$r$ interpolants $g_h$ for $g$ satisfying the other hypotheses of Theorem \ref{Fhbounds} also exist, since interpolants with the correct convergence properties in Euclidean space exist \cite{Li,GaNe22} and the Euclidean metric is obviously related to the $g$ metric in a bounded way.

\correction{}{More generally, the Regge finite elements~\cite{Li} can be used to construct $g_h$ on an $n$-dimensional manifold $M$ that is not necessarily an open subset of Euclidean space. The restriction of $g_h$ to each $T \subseteq M$ is taken to be $f_{h,T}^{-*} \hat{g}_T$ for some symmetric tensor field $\hat{g}_T$ on the standard simplex $\hat{T}$ that has polynomial coefficients.  The degrees of freedom for $g_h|_T$ are suitable moments of $\hat{g}_T$ on faces of $\hat{T}$, as detailed in~\cite{Li}.  The tangential-tangential continuity of $g_h$ is enforced by equating degrees of freedom associated with shared interfaces in the same way that one does on Euclidean domains.  In more detail, let $e = T_1 \cap T_2$ be a codimension-1 interface between two simplices $T_1,T_2 \subseteq M$, and let $\hat{e}_j = f_{h,T_j}^{-1}(e)$ for $j=1,2$, so that the map $f_{h,T_2}^{-1}\big|_e \circ f_{h,T_1}\big|_{\hat{e}_1}$ sends $\hat{e}_1$ to $\hat{e}_2$.  If we assume that $f_{h,T_2}^{-1}\big|_e \circ f_{h,T_1}\big|_{\hat{e}_1}$ is affine and we equate the degrees of freedom associated with $e \subset T_1$ and $e \subset T_2$, then we obtain
\[
i_{\hat{e}_1}^* \hat{g}_{T_1} = \left( f_{h,T_2}^{-1}\big|_e \circ f_{h,T_1}\big|_{\hat{e}_1} \right)^* i_{\hat{e}_2}^* \hat{g}_{T_2},
\]
where $i_{\hat{e}_j}$ denotes the inclusion $\hat{e}_j \hookrightarrow \hat{T}$.  Since $\hat{g}_{T_j} = f_{h,T_j}^* (g_h|_{T_j})$ and the map $f_{h,T_j} \circ i_{\hat{e}_j} \circ f_{h,T_j}^{-1}\big|_e$ is simply the inclusion $i_{e,T_j} : e \hookrightarrow T_j$, this implies that
\[
i_{e,T_1}^* (g_h|_{T_1}) = i_{e,T_2}^* (g_h|_{T_2}).
\]
In other words, the tangential-tangential components of $g_h$ are single-valued on $e$.  Note that this is an equality that holds pointwise on a curved facet $e \subset M$ whose tangent vectors vary with position.  Note also that the reasoning above required that $f_{h,T_2}^{-1}\big|_e \circ f_{h,T_1}\big|_{\hat{e}_1}$ is affine.  This is a mild hypothesis that is satisfied, for instance, in the setting where $M$ is a smooth hypersurface in $\mathbb{R}^{n+1}$ and $f_{h,T_j}$ is the composition of two maps: an affine map that sends the standard simplex to a flat simplex $\widetilde{T}_j$ with vertices on $M$, and a nonlinear map that sends points on $\widetilde{T}_j$ to their closest point projection onto $M$.
}

\section{Numerical Example and Benchmarking}

The discrete problem presented in Section 5 was implemented in python using the NGSolve framework~\cite{schoeberl1997netgen,schoeberl2014ngsolve}. In order to benchmark against an analytically solved example, such an example needed to be calculated. We chose a spherical cap of radius $1$ with the parameterization $(x,y) \mapsto (x,y,\sqrt{1 - x^2  - y^2})$ because it is easy to calculate the Gauss curvature, and we were lucky to guess the correct frame:

\[e_1 = \sqrt{\frac{1 - x^2 - y^2}{1 - y^2}}\pdiff{}{x},\]
\[e_2 = \frac{-xy}{\sqrt{1 - y^2}}\pdiff{}{x} + \sqrt{1 - y^2}\pdiff{}{y},\]
\[\alpha = -\frac{1}{\sqrt{1 - x^2 - y^2}}\left(ydx + \frac{xy^2}{1 - y^2}dy\right).\]

It can be checked (with much labor) that $d\alpha = \frac{1}{\sqrt{1 - x^2 - y^2}} dx\wedge dy = K dA$, $d\star\alpha = 0$, and $\alpha = \iprod{\nabla e_2,e_1}$.

Our benchmarks were evaluated on the domain $(x,y) \in [-\frac{1}{4},\frac{1}{4}]^2$. Meshes were created with the ``GenerateMesh" function, generating unstructured meshes with (Euclidean) edge length approximately equal to $h$. Each interior vertex was then perturbed with uniform randomness in the range $[-\frac{h}{4},\frac{h}{4}]^2$. The order-$r$ Regge finite element space~\cite{Li} and interpolant described in~\cite[Appendix A]{GaNe22} was used for $g_h$, and the order-$r$ Lagrange finite element space was used for $V_h$.  \correction{}{The aforementioned interpolant onto the Regge finite element space is constructed as follows: One first computes an elementwise $L^2$-projection of the metric onto the (globally discontinuous) space of piecewise polynomial symmetric tensor fields of degree at most $r$.  Then, as explained in~\cite[Appendix A]{GaNe22}, degrees of freedom on shared interfaces are averaged to produce a member of the order-$r$ Regge finite element space.  Experiments were performed with $r \in \{0,1,2,3\}$.} The python code used to produce this data can be provided upon request.

As one can see from Figure~\ref{fig:convergence}, the numerical scheme's convergence in the $L^2(M,g)$-norm very closely matches a priori predictions.  \correction{}{Table~\ref{tab:convergence} displays the same data in tabular form.  In the table, ``ndof'' refers to the dimension of the order-$r$ Regge finite element space plus the the dimension of the order-$r$ Lagrange finite element space.}

\begin{figure}[htp]
\centering
\includegraphics[scale=0.6,trim=0in 0.5in 0in 0in,clip=true]{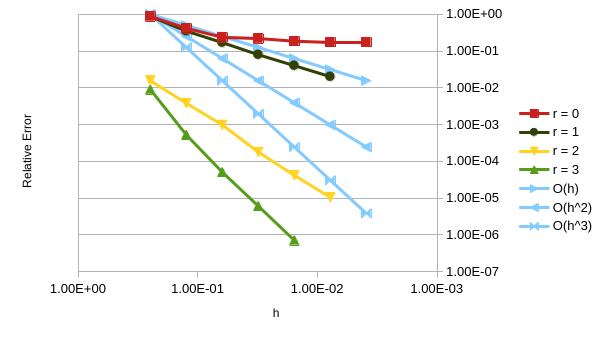}
\caption{Graphs of the relative error $\|du_h - \star\alpha\|_{L^2(M,g)}/\|\alpha\|_{L^2(M,g)}$ vs. the parameter $h$ which controls the diameter of mesh elements.}
\label{fig:convergence}
\end{figure}

\begin{figure}[htp]
\centering
\begin{tabular}{|l|l|l|l|l|l|l|l|l|l|l|}
\hline
	  & \multicolumn{2}{c|}{$r = 0$} & \multicolumn{2}{c|}{$r = 1$}\\
	\hline
	$h$ & $E$ & ndof & $E$ & ndof \\
	\hline
	2.5E-1 & 8.75E-1 &	22  &   8.73E-1 &	66 \\
1.3E-1 & 4.16E-1 &	86 &	3.48E-1 &	306 \\
6.3E-2 & 2.36E-1 &	310 &	1.69E-1 &	1170 \\
3.1E-2 & 2.18E-1 &	1286 &	7.93E-2 &	5010\\
1.6E-2 & 1.85E-1 &	4906&	4.01E-2 &	19362 \\
7.8E-3 & 1.73E-1 &	19334 &	2.00E-2 &	76818 \\
3.9E-3 & 1.74E-1 &	76274 & \multicolumn{2}{c|}{}\\
\hline
\end{tabular}
\begin{tabular}{|l|l|l|l|l|}
\hline
 & \multicolumn{2}{c|}{$r = 2$} & \multicolumn{2}{c|}{$r = 3$} \\
\hline
$h$ & $E$ & ndof & $E$ & ndof  \\
 \hline
 2.5E-1 & 1.56E-2 &	134 &	9.00E-3 &	226 \\
1.3E-1 & 3.80E-3 &	662 &	5.24E-4 &	1154 \\
6.3E-2 & 9.75E-4 &	2582 &	5.06E-5 &	4546 \\
3.1E-2 & 1.78E-4 &	11174 &	6.06E-6 &	19778  \\
1.6E-2 & 4.15E-5 &	43370 &	7.09E-7 &	76930 \\
7.8E-3 & 1.05E-5 &	172454 & \multicolumn{2}{c|}{} \\		
\hline
\end{tabular}
\caption{Table containing $h$ = average triangle diameter, $E$ = relative error of $du_h$ measured in the $L^2(M,g)$ norm, ndof = total degrees of freedom (including those used in constructing the Regge metric).}
\label{tab:convergence}
\end{figure}
\pagebreak

\section{Appendix}
The following small lemma is very useful for showing results related to $C_{g_1,g_2}(\Omega)$.
\begin{lemma}\label{norm_comparison_equiv}
Let $V$ be a vector space and $(V,\|\cdot\|_{g_1})$, $(V,\|\cdot\|_{g_2})$ be two reflexive Banach space structures on $V$. By abuse of notation, also use the same symbols for the induced operator norms on $V^*$. Then:
\begin{equation}
\sup_{\alpha \in V^*, \;\alpha \ne 0} \frac{\|\alpha\|_{g_1}}{\|\alpha\|_{g_2}} = \sup_{v \in V,\;v \ne 0} \frac{\|v\|_{g_2}}{\|v\|_{g_1}} \end{equation}
and
\begin{equation}\inf_{v \in V,\; v \ne 0} \frac{\|v\|_{g_2}}{\|v\|_{g_1}} = \inf_{\alpha \in V^*, \;\alpha \ne 0} \frac{\|\alpha\|_{g_1}}{\|\alpha\|_{g_2}}. \end{equation}
\end{lemma}

\begin{proof}

Let $\alpha \in V^*$. Then:
\begin{align*}\|\alpha\|_{g_1} &= \sup_{v \ne 0} \frac{|\alpha(v)|}{\|v\|_{g_1}} = \sup_{v \ne 0} \frac{\left|\alpha\left(\frac{v}{\|v\|_{g_2}}\right)\right|}{\frac{\|v\|_{g_1}}{\|v\|_{g_2}}}\\
&\le \left(\sup_{v \ne 0} \frac{\|v\|_{g_2}}{\|v\|_{g_1}}\right)\left(\sup_{v \ne 0} \frac{|\alpha(v)|}{\|v\|_{g_2}}\right) = \sup_{v \in V,\; v \ne 0} \frac{\|v\|_{g_2}}{\|v\|_{g_1}}\|\alpha\|_{g_2}.\end{align*}
Since the operator norms on $V^{**}$ are isometric to the original norms on $V$, the same reasoning also shows that if $v \in V = V^{**}$ then $\|v\|_{g_2} \le \sup_{\alpha \in V^*,\;\alpha \ne 0}\frac{\|\alpha\|_{g_1}}{\|\alpha\|_{g_2}}\|v\|_{g_1}$, proving equality in the first statement.

The second statement follows from the fact that $\inf_x \frac{a(x)}{b(x)} = \frac{1}{\sup_x \frac{b(x)}{a(x)}}$.
\end{proof}

\begin{proof}[Proof of Theorem \ref{L2_equiv}]
We will establish a uniform bound on $\|\alpha\|_{g_1}$ when $\alpha \in T^*_x(M)^p \otimes\Lambda^k_x(M),\; \|\alpha\|_{g_2} = 1$, with a special case for $k = n,\; p = 0$ (in which case $\alpha = dV_{g_2}$). In this proof, $x \in M$ is always a point around which $g_1$ and $g_2$ are both smooth in a neighborhood. We will also use $\{F_i\}_{i = 1}^n$ and $\{\theta^i\}_{i = 1}^n$ to refer to a frame and coframe that are orthonormal in the metric $g_1$ and dual to each other, and $\{\tilde F_i\}_{i = 1}^n$ and $\{\tilde \theta^i\}_{i = 1}^n$ to refer to a frame and coframe that are orthonormal in the metric $g_2$ and dual to each other. The two frames will also induce the same orientation on $M$.

Since the covector norm is the same as the operator norm, \\$\|dV_{g_2}\|_{g_1} = \|\tilde\theta^1 \wedge \dots \wedge \tilde\theta^n\|_{g_1} = \det{[\tilde\theta^i(F_j)]}$. Let $\tilde\Theta$ be the matrix defined by $\tilde\Theta_{ij} = \tilde \theta^i(F_j)$. If $u,w \in \mathbb{R}^n$, then $u \cdot \tilde\Theta w = \iprod{u^i\tilde F_i,w^jF_j}_{g_2}$. The eigenvalues of the matrix $\tilde \Theta$ have magnitude bounded above by $\|\tilde\Theta\|_2 = \sup_{w \in \mathbb{R}^n\backslash \{0\}} \frac{\|\tilde\Theta w\|_2}{\|w\|_2} = \sup_{u,w \in \mathbb{R}^n \backslash \{0\}} \frac{u \cdot\tilde\Theta w}{{\|u\|_2\|w\|_2}}$. However, writing $u' = u^i\tilde F_i$ and $w' = w^jF_j$, this expression is the same as $\sup_{u',w' \in T_x(M) \backslash \{0\}} \frac{\iprod{u',w'}_{g_2}}{\|u'\|_{g_2}\|w'\|_{g_1}} = \sup_{w' \in T_x(M) \backslash \{0\}} \frac{\|w'\|_{g_2}}{\|w'\|_{g_1}}$. Likewise, the eigenvalues of $\tilde\Theta$ have magnitude bounded below by $\inf_{w \in \mathbb{R}^n \backslash \{0\}} \frac{\|\tilde\Theta w\|_2}{\|w\|_2} = \inf_{w' \in T_x(M)\backslash\{0\}}\frac{\|w'\|_{g_2}}{\|w'\|_{g_1}}$. The determinant is the product of eigenvalues, so we have
\begin{align*}\frac{1}{C_{g_1,g_2}(M)^n} &\le \left(\frac{1}{\sup_{v \in T_x(M),\; v\ne 0}\frac{\|v\|_{g_1}}{\|v\|_{g_2}}}\right)^n \\
 &=\left(\inf_{v \in T_x(M),\; v\ne 0}\frac{\|v\|_{g_2}}{\|v\|_{g_1}}\right)^n \le \|dV_{g_2}\|_{g_1} \le \left(\sup_{v \in T_x(M),\;v \ne 0} \frac{\|v\|_{g_2}}{\|v\|_{g_1}}\right)^n = C_{g_2,g_1}(M)^n,\end{align*}
 proving part 1 of the theorem.

To bound $\|\alpha\|_{g_1}$ for general $p$ and $k$, we express $\alpha$ in coordinates: \\$\alpha = \sum_{I,J} \alpha_{IJ} (\tilde\theta^{j_1} \otimes \dots \otimes \tilde\theta^{j_p}) \otimes \tilde\theta^{i_1}\wedge\dots\wedge\tilde\theta^{i_k}$, where $I$ ranges over all increasing multi-indices $(i_1,\dots,i_k) \in [1,n]^k$ and $J$ ranges over all tuples of indices $(j_1,\dots,j_p) \in [1,n]^p$. Since $\|\alpha\|_{g_2} = 1$, we have $\sum_{I,J} \alpha_{IJ}^2 = 1$. For convenience, in the following lines we will use $\tilde\theta^I$ to refer to $\tilde\theta^{i_1}\wedge \dots \wedge\tilde\theta^{i_k}$ and $\otimes^J\tilde\theta$ to refer to $\tilde\theta^{j_1}\otimes \dots \otimes \tilde\theta^{j_p}$.  We have
\begin{align*}
\iprod{\alpha,\alpha}_{g_1} &= \sum_{I,J,L,M} \alpha_{IJ}\alpha_{LM} \iprod{\otimes^J\tilde\theta,\otimes^M\tilde\theta}_{g_1}\iprod{\tilde\theta^I,\tilde\theta^L}_{g_1}\\
&= \sum_{I,J,L,M} \alpha_{IJ}\alpha_{LM}\prod_{s = 1}^p \iprod{\tilde\theta^{j_s},\tilde\theta^{m_s}}_{g_1}\sum_{\sigma \in S_{k}}\mathrm{sign}(\sigma)\prod_{t = 1}^{k} \iprod{ \tilde\theta^{i_t}, \tilde\theta^{l_{\sigma(t)}} }_{g_1}.
\end{align*}

The inner product $\iprod{\tilde\theta^I,\tilde \theta^L}_{g_1}$ above is equal to the determinant of the $k\times k$ matrix $\tilde\Theta^{IL}_{tr} = \iprod{\tilde\theta^{i_t},\tilde\theta^{l_r}}_{g_1}$. Since $\tilde \theta^i = \tilde\theta^i(F_a)\theta^a$, we have $\tilde\Theta^{IL} = E^I(E^L)^\intercal$, where $E^I_{ta} = \tilde\theta^{i_t}(F_a)$ and likewise for $E^L$. As before, for $u \in \mathbb{R}^k$ and $w \in \mathbb{R}^n$, we have $u \cdot E^I w = \iprod{u^t \tilde F_{i_t},w^a F_a}_{g_2}$, and $\|E^I\|_2 = \sup_{u' \in \mathrm{span}(\tilde{F}_{i_1},\dots,\tilde{F}_{i_k}) \backslash \{0\}, w' \in T_x(M)\backslash \{0\}} \frac{\iprod{u',w'}_{g_2}}{\|u'\|_{g_2}\|w'\|_{g_1}}$. By expanding the range of $u'$, we get $\|E^I\|_2 \le \sup_{w' \in T_x(M) \backslash \{0\}} \frac{\|w'\|_{g_2}}{\|w'\|_{g_1}}$. Similarly, $w \cdot (E^L)^\intercal u = \iprod{w^b F_b,u^r\tilde F_{l_r}}_{g_1}$, so $\|(E^L)^\intercal\|_2 \le \sup_{u' \in T_x(M) \backslash \{0\}} \frac{\|u'\|_{g_2}}{\|u'\|_{g_1}}$. Therefore 
\[
|\iprod{\tilde \theta^I,\tilde \theta^L}_{g_1}| = |\mathrm{det}(\tilde\Theta^{IL})| \le \|\tilde \Theta^{IL}\|_2^k \le \|E^I\|_2^k\|(E^L)^\intercal\|_2^k \le \sup_{v \in T_x(M) \backslash \{0\}} \left(\frac{\|v\|_{g_2}}{\|v\|_{g_1}}\right)^{2k}.
\]
The terms $\iprod{\tilde \theta^{j_s},\tilde\theta^{m_s}}_{g_1}$, meanwhile, are simply bounded above by $\sup_{\tilde \theta \in T_x^*(M) \backslash \{0\}} \frac{\|\tilde \theta\|_{g_1}^2}{\|\tilde \theta\|_{g_2}^2}$ by the Cauchy-Schwarz inequality.

Therefore
\begin{align*}
|\iprod{\alpha,\alpha}_{g_1}| &\le \sup_{\tilde \theta \in T_x^*(M) \backslash \{0\}} \left(\frac{\|\tilde \theta\|_{g_1}}{\|\tilde \theta\|_{g_2}}\right)^{2p}\sup_{v \in T_x(M)\backslash \{0\}}\left(\frac{\|v\|_{g_2}}{\|v\|_{g_1}}\right)^{2k}\sum_{I,J,L,M} |\alpha_{IJ}\alpha_{LM}| \\
&= \sup_{v \in T_x(M) \backslash \{0\}} \left(\frac{\|v\|_{g_2}}{\|v\|_{g_1}}\right)^{2k + 2p} \sum_{I,J}|\alpha_{IJ}|\sum_{L,M}|\alpha_{LM}| \\
&\le C_{g_2,g_1}(M)^{2(k + p)} {n \choose k}n^p,
\end{align*}
since $\sqrt{{n \choose k}n^p}$ is the maximum 1-norm of all unit vectors in $\mathbb{R}^{{n\choose k}n^p}$. Thus, for all $\alpha \in T^*_x(M)^p \otimes\Lambda^k_x(M)$, $\|\alpha\|_{g_1}^2 \le {n \choose k}n^pC_{g_2,g_1}(M)^{2(k + p)}\|\alpha\|_{g_2}^2$.

Part 2 of the theorem is now almost immediate. Let $\alpha \in C^\infty_c\Omega^{p,k}(M)$. Then 
\begin{align*}\|\alpha\|_{L^2(M,g_1)}^2 &= \sum_{T \in \mathcal{T}_1} \int_T \|\alpha\|_{g_1}^2dV_{g_1} \\
&\le {n \choose k}n^p\sum_{T \in \mathcal{T}_1, \; T' \in \mathcal{T}_2}C_{g_2,g_1}(T \cap T')^{2k + 2p} \int_{T \cap T'} \|\alpha\|_{g_2}^2\|dV_{g_1}\|_{g_2}dV_{g_2} \\
&\le{n \choose k}n^p\sum_{T' \in \mathcal{T}_2}C_{g_2,g_1}(T')^{2k + 2p}D_{g_1,g_2}(T')\int_{T'}\|\alpha\|_{g_2}^2dV_{g_2}\\
&\le{n \choose k}n^pC_{g_2,g_1}(M)^{2k + 2p}D_{g_1,g_2}(M)\|\alpha\|_{L^2(M,g_2)}^2.
\end{align*}

Since $C^\infty_c\Omega^{p,k}(M)$ is dense in $L^2\Omega^{p,k}(M,g_2)$, the same inequality holds for all $\alpha \in L^2\Omega^{p,k}(M,g_2)$. 
\end{proof}

\begin{proof}[Proof of Lemma \ref{compare_cov}]
To see that $\nabla - \nabla'$ is a bundle morphism for all $x \in T \cap T'$, all that is necessary is to calculate 
\[
(\nabla - \nabla')_V(fW) = df(V)W + f\nabla_V W - df(V)W - f\nabla'_VW = f(\nabla - \nabla')_VW.
\]
What this shows is that $\nabla - \nabla'$ is $C^\infty(T)$-linear, which by definition means $(\nabla - \nabla')|_x: T_x(T \cap T') \otimes T_x(T \cap T') \to T_x(T \cap T')$ is a well-defined linear map for all $x \in T \cap T'$. Clearly this means $(\nabla - \nabla')|_x: T_x^*(T \cap T)^p \otimes \Lambda^k_x(T \cap T') \to T^*_x(T \cap T')^{p+1} \otimes \Lambda^k_x(T \cap T')$ is also a well-defined linear map.

Next we will find a bound on $|\Gamma^i_{jk} - \Gamma'^i_{jk}|$, where $\Gamma^i_{jk}$ and $\Gamma'^i_{jk}$ are the Christoffel symbols associated with $g$ and $g'$, respectively. First let $E$ be the Cholesky factor of $G^{-1}$, i.e. $E^{\intercal}E = G^{-1}$, so that $E_i = \sum_j E_{ij} \pdiff{}{x^j}$ is a $g$-orthonormal frame. Then
\begin{align*}
&|\Gamma^i_{jk} - \Gamma'^i_{jk}| = \frac{1}{2}|G^{il}(G_{lj,k} + G_{lk,j} - G_{jk,l}) - G'^{il}(G'_{lj,k} + G'_{lk,j} - G'_{jk,l})|\\
&= \frac{1}{2}|(G^{il} - G'^{il})(G_{lj,k} + G_{lk,j} - G_{jk,l}) + G'^{il}(G_{lj,k} + G_{lk,j} - G_{jk,l} - G'_{lj,k} - G'_{lk,j} + G'_{jk,l})|\\
&\le \frac{3}{2}\Bigg[\bigg(\max_{i}\sum_{j}|G^{-1}_{ij} - G'^{-1}_{ij}|\bigg)\max_{i,j,k}\bigg|dG_{ij}\bigg(\pdiff{}{x^k}\bigg)\bigg| \\
&\quad\quad + \bigg(\max_{i}\sum_j|G'^{-1}_{ij}|\bigg)\max_{i,j,k}\bigg|(dG - dG')_{ij}\bigg(\pdiff{}{x^k}\bigg)\bigg|\Bigg]\\
&\le \frac{3}{2}\Bigg(\|G^{-1} - G'^{-1}\|_\infty\max_{i,j,k}\bigg|dG_{ij}\bigg(\sum_lE^{-1}_{kl}E_l\bigg)\bigg| \\
&\quad\quad + \|G'^{-1}\|_\infty\max_{i,j,k}\bigg|(dG - dG')_{ij}\bigg(\sum_lE^{-1}_{kl}E_l\bigg)\bigg|\Bigg)\\
&\le \frac{3}{2}\|E^{-1}\|_\infty(\|G^{-1} - G'^{-1}\|_\infty\|dG\|_{m,g} + \|G'^{-1}\|_\infty\|dG - dG'\|_{m,g}).
\end{align*}
This allows us to bound $\|(\nabla - \nabla')_{E_i}E_j\|_g$:
\begin{align*}
\|(\nabla - \nabla')_{E_i}E_j|_x\|_g &= \bigg\|\sum_{k,l}E_{ik}E_{jl}(\nabla - \nabla')_\pdiff{}{x^k}\pdiff{}{x^l}\bigg\|_g \\
&= \bigg\|\sum_kE_{ik}\sum_lE_{jl}(\Gamma^o_{kl} - \Gamma'^o_{kl})\pdiff{}{x^o}\bigg\|_g \\
&\le \|E\|_\infty^2\max_{k,l,o}|\Gamma^o_{kl} - \Gamma'^o_{kl}|\bigg\|\sum_p E^{-1}_{op}E_p\bigg\|_g\\
&\le \frac{3}{2}\|E\|_\infty^2\|E^{-1}\|_\infty^2(\|G^{-1} - G'^{-1}\|_\infty\|dG\|_{m,g} + \|G'^{-1}\|_\infty\|dG - dG'\|_{m,g})\\
&\le \frac{3n^\frac{5}{2}}{2}\|E\|^2_2\|E^{-1}\|_2^2(\|G^{-1} - G'^{-1}\|_2\|dG\|_{2,g} + \|G'^{-1}\|_2\|dG - dG'\|_{2,g})\\
&= \frac{3n^\frac{5}{2}}{2}\|G^{-1}\|_2\|G\|_2(\|G^{-1} - G'^{-1}\|_2\|dG\|_{2,g} + \|G'^{-1}\|_2\|dG - dG'\|_{2,g}).
\end{align*}
In the second to last line, we used the fact that $\|\cdot\|_m \le \|\cdot\|_2$ and $\|\cdot\|_\infty \le \sqrt{n}\|\cdot\|_2$ for matrices. In the last line, we used the fact that $\|E\|_2^2 = \|G^{-1}\|_2$ since $E$ is the Cholesky factor of $G^{-1}$.

Then we can calculate
\begin{align*}
\|(\nabla - \nabla')|_x\|_{2,g} &= \sup_{V,W \in T_x(T \cap T') \backslash \{0\}} \frac{\|(\nabla - \nabla')_VW\|_g}{\|V\|_g\|W\|_g} \le n\max_{i,j} \|(\nabla - \nabla')_{E_i}E_j\|_g \\
&\le \frac{3n^\frac{7}{2}}{2}\|G^{-1}\|_2\|G\|_2(\|G^{-1} - G'^{-1}\|_2\|dG\|_{2,g} + \|G'^{-1}\|_2\|dG - dG'\|_{2,g}).
\end{align*}

Lastly, we will calculate the pointwise norm of $(\nabla - \nabla')|_x\alpha$ for an arbitrary $\alpha \in T_x^*(T \cap T')^p \otimes \Lambda^k_x(T \cap T')$:
\begin{align*}
\|&(\nabla - \nabla')|_x\alpha\|_g = \sup_{V,W_1,\dots,W_{k+p} \in T_x(T \cap T') \backslash \{0\}}\frac{|\sum_j\alpha (W_1,\dots,(\nabla - \nabla')_VW_j,\dots,W_{k+p})|}{\|V\|_g\|W_1\|_g\dots\|W_{k+p}\|_g}\\
&\le \sum_j \sup_{V,W_1,\dots,W_{k+p}}\frac{\|(\nabla - \nabla')_VW_j\|_g}{\|V\|_g\|W_j\|_g}\frac{\big|\alpha(W_1,\dots,W_{j-1},\frac{(\nabla - \nabla')_VW_j}{\|(\nabla - \nabla')_VW_j\|_g},W_{j+1},\dots,W_{k+p})\big|}{\|W_1\|_g\dots\widehat{\|W_j\|_g}\dots\|W_{k+p}\|_g}\\
&\le \|(\nabla - \nabla')|_x\|_{2,g} \sum_j \sup_{W_1,\dots,W_{k+p}}\frac{|\alpha(W_1,\dots,W_{k+p})|}{\|W_1\|_g\dots\|W_{k+p}\|_g}\\
&\le (k+p)\|(\nabla - \nabla')|_x\|_{2,g}\|\alpha\|_g.
\end{align*}
Therefore the $L^2(U,g)$ operator norm of $\nabla - \nabla': C^\infty\Omega^{p,k}(U) \to C^\infty\Omega^{p+1,k}(U)$ is less than or equal to $(k+p)\esssup_{x \in U} \|(\nabla - \nabla')|_x\|_{2,g}$, giving the required bound.

\end{proof}

\section*{Acknowledgments}
The authors were supported by NSF grants DMS-2012427 and DMS-2411208 and the Simons Foundation award MPS-TSM-00002615.

%     \bibliography{connections.bib}{}
% \bibliographystyle{plain}
\printbibliography

\end{document}